%
%
%
%
%
%
\RequirePackage{fix-cm}
\documentclass[smallextended]{svjour3_2}       
\smartqed  
\usepackage{graphicx}
 \usepackage{mathptmx}      
%
%
%
 \journalname{ }

\usepackage{amssymb,amsmath}
\usepackage{ntheorem}
\usepackage{xspace}
\usepackage{euscript} 
\usepackage{bussproofs}
\usepackage{cases}
\usepackage{stmaryrd}

\makeatletter \let\cl@chapter\relax \makeatother 
\usepackage{cleveref}

\usepackage{esvect}

\DeclareMathAlphabet{\mathcal}{OMS}{cmsy}{m}{n}

\newtheorem{thm}{Theorem}[section]
\numberwithin{thm}{section}
\newtheorem{lem}[thm]{Lemma}
\newtheorem{dfn}[thm]{Definition}
\newtheorem{rmk}[thm]{Remark}

\Crefname{dfn}{Definition}{Definitions}
\Crefname{lem}{Lemma}{Lemmas}
\Crefname{thm}{Theorem}{Theorems}
\Crefname{rmk}{Remark}{Remarks}



\newcommand{\IPC}{\ensuremath{\mathsf{IPC}}\xspace}
\newcommand{\CPC}{\ensuremath{\mathsf{CPC}}\xspace}

\newcommand{\PD}{\ensuremath{\mathbf{PD}}\xspace}
\newcommand{\PT}{\ensuremath{\mathbf{PT}}\xspace}
\newcommand{\PID}{\ensuremath{\mathbf{PID}}\xspace}

\newcommand{\PTo}{\ensuremath{\mathsf{PT}}\xspace}
\newcommand{\PIDo}{\ensuremath{\mathsf{PID}}\xspace}
\newcommand{\PDo}{\ensuremath{\mathsf{PD}}\xspace}

\newcommand{\Inql}{\ensuremath{\mathsf{InqL}}\xspace}
\newcommand{\KP}{\ensuremath{\mathsf{KP}}\xspace}
\newcommand{\ND}{\ensuremath{\mathsf{ND}}\xspace}

\newcommand{\ML}{\ensuremath{\mathsf{ML}}\xspace}

\newcommand{\LLn}{\ensuremath{{\mathsf{L}^\neg}}\xspace}
\newcommand{\LL}{\ensuremath{\mathsf{L}}\xspace}

\newcommand{\dep}{\ensuremath{\mathop{=\!}}\xspace}

\newcommand{\sor}{\ensuremath{\otimes}\xspace}

\newcommand{\bor}{\ensuremath{\vee}\xspace}

\newcommand{\bigsor}{\bigotimes}
\newcommand{\bigbor}{\bigvee}

\newcommand{\adm}{\mid\!\!\sim}

\newcommand{\Su}{\ensuremath{\mathcal{S}}\xspace}
\newcommand{\Fl}{\ensuremath{\mathcal{F}}\xspace}
\newcommand{\St}{\ensuremath{\mathcal{ST}}\xspace}

\newcommand{\eqv}[1]{\sim_{#1}} 
\newcommand{\depp}[2]{\dep(\vv{#1},{#2})} 
\newcommand{\lang}[1]{${\EuScript L}_{#1}$}

\newcommand{\cnt}{\ensuremath{\divideontimes}\xspace}
\newcommand{\imp}{\rightarrow} 
\newcommand{\A}{\forall} 
\newcommand{\Ifff}{\ensuremath{\Leftrightarrow}\xspace}

\newcommand{\sig}{\ensuremath{\sigma}}
\renewcommand{\phi}{\varphi}
\newcommand{\Ga}{\Gamma}

\begin{document}

\title{Structural completeness in propositional logics of dependence\thanks{Support by the Netherlands Organisation for Scientific 
Research under grant 639.032.918 is gratefully acknowledged.}
}


\author{Rosalie Iemhoff         \and
        Fan Yang 
}


\institute{Rosalie Iemhoff \at
              Department of Philosophy and Religious Studies, Utrecht University\\
               Janskerkhof 13, 3512 BL Utrecht, The Netherlands \\
              \email{r.iemhoff@uu.nl}           
           \and
           Fan Yang \at
              Department of Philosophy and Religious Studies, Utrecht University\\
               Janskerkhof 13, 3512 BL Utrecht, The Netherlands \\
              \email{fan.yang.c@gmail.com} 
}


\maketitle

\begin{abstract}
In this paper we prove that three of the main propositional logics of dependence (including propositional dependence logic and  inquisitive logic), none of which is structural, are structurally complete with respect to a class of substitutions under which the logics are closed. We obtain an  analogues result with respect to stable substitutions, for the negative variants of some well-known intermediate logics, which are intermediate theories that are closely related to inquisitive logic. 

\keywords{structural completeness \and dependence logic \and inquisitive logic \and intermediate logic}
\end{abstract}

\section{Introduction}
\label{intro}
In recent years there have appeared many results on admissible rules in logics. The diversity of the results show that the properties of admissibility vary from logic to logic, and the complexity of some of the results show that describing these rules is not always an easy matter. 
The admissible rules of a logic are the rules under which the logic is closed, meaning that one could add them to the logic without obtaining new theorems. Since adding a derivable rule to a logic cannot alter that what can be derived, derivable rules are always admissible, thus showing that the notion of admissibility is a natural extension of the notion of derivability.    

For {\em structural logics}, which means logics that are closed under uniform substitution, a rule is admissible if every substitution that unifies the premiss, unifies the conclusion, where a substitution $\sig$ unifies a formula $\phi$ in a logic if $\sig\phi$ is derivable in the logic. Until now, most logics for which the admissibility relation has been studied are structural. Main examples are classical and intuitionistic propositional logic and certain modal logics such as {\sf K}, {\sf K4}, and {\sf S4}. Except for classical logic, all these logics have nonderivable admissible rules and their admissibility relations are decidable and have concise axiomatizations   \cite{ghilardi99,iemhoff01,jerabek05,roziere92,rybakov97}. 
In recent years, admissibility has been studied for a plethora of other logics as well. However, logics that are not structural, have received less attention. In order to obtain a meaningful notion of admissibility for such a logic one first has to isolate a set of substitutions, as large as one thinks possible, under which the logic is closed. Admissibility can then be studied with respect to this class of substitutions. 

In this paper we show that three of the main propositional logics of dependence, none of which is structural, are structurally complete with respect to the class of \emph{flat} substitutions. We obtain an  analogues result, but then with respect to \emph{stable} substitutions, for the negative variants of some well-known intermediate logics, which are intermediate theories that are closely related to one of the logics of dependence. As a byproduct we develop an extension of the usual logics of dependence in which the use of negation and the dependence atom is not restricted to propositional variables, but to the much larger class of flat formulas instead. 

We think the interest in these results lies in the fact that logics of dependence, to be described below, are  versatile and widely applicable nonclassical logics. And knowing that many nonclassical logics have nontrivial adimissible rules, establishing that in these logics all rules that are admissible (with respect to flat substitutions) are derivable, provides a useful insight in the logics. Moreover, 
these results provide one of the first examples of natural nonstructural logics for which admissibility is  studied. A paper in which various admissibility relations of nonstructural logics, the same logics that we treat in Theorem~\ref{strcpl_neg_int}, have been studied is \cite{Migliolietc89}, but the results are different from the ones obtained here. 

\emph{Dependence logic} is a new logical formalism that characterizes the notion of ``dependence'' in social and natural sciences. 
First-order dependence logic was introduced by V\"{a}\"{a}n\"{a}nen \cite{Van07dl} as a development of \emph{Henkin quantifier} \cite{henkin61}  and \emph{independence-friendly logic}  \cite{HintikkaSandu1989}. Recently, propositional dependence logic (\PDo) was studied and axiomatized in \cite{SanoVirtema2014,VY_PD}. With a different motivation, Ciardelli and Roelofsen \cite{InquiLog} introduced and axiomatized \emph{propositional inquisitive logic} (\Inql), which can be regarded as a natural variant of propositional dependence logic. Both \PDo and \Inql are fragments of \emph{propositional downwards closed team logic} (\PTo), which was studied in \cite{VY_PD} and essentially also in \cite{Ciardelli2015}. Dependency relations are characterized in these propositional logics of dependence by a new type of atoms $\dep(\vv{p},q)$, called \emph{dependence atoms}. Intuitively, the atom specifies that \emph{the proposition $q$ depends completely on the propositions $\vv{p}$}. The semantics of these logics is called \emph{team semantics}, introduced by Hodges \cite{Hodges1997a,Hodges1997b}. The basic idea of this new semantics is that properties of dependence cannot be manifested in \emph{single} valuations, therefore unlike the case of classical propositional logic, formulas in  propositional logics of dependence are evaluated on \emph{sets} of valuations (called \emph{teams}) instead.


The three logics \PDo, \Inql and \PTo are of particular interest, because they are all \emph{expressively complete}, in the sense that they characterize all downwards closed nonempty collections of teams. 
As a result of the feature of team semantics, the sets of theorems of these logics are closed under flat substitutions, but not closed under \emph{uniform substitution}. As mentioned above, in this paper we prove that the three logics are structurally complete with respect to flat substitutions. 

In the study of admissible rules there is a technical detail that needs to be addressed. In \PDo and \PTo, negation and the dependence operator can only be applied to atoms. Therefore, the only substitutions under which these logics are closed are \emph{renamings}, substitutions that replace atoms by atoms. However, these logics can be conservativily extended to logics that are closed under flat substitutions. These extensions, \PD and \PT, are closed under flat substitutions, and for these logics, as well as for \Inql, the notion of admissibility with respect to flat substitutions is shown to be equal to derivability (Theorem~\ref{maintheorem}). 


There is a close connection between inquisitive logic and certain intermediate logics. 
The set of theorems of the former equals the negative variant of Kreisel-Putnam logic (\KP), which is equal to the negative variant of  Medvedev logic (\ML). It is open whether  \KP is structurally complete, whereas \ML is known to be structurally complete but not hereditarily structurally complete. An interesting corollary we obtain in this paper is that the negative variants of both \ML and \KP are hereditarily structurally complete with respect to negative substitutions.

\section{Logics of dependence}

\subsection{Syntax and semantics}

We first define propositional downwards closed team logic. All of the logics of dependence we consider in the paper are fragments of propositional downwards closed team logic.
\begin{dfn}\label{syntax_pd}
Let $p,q,\vv{p}=p_1,\dots,p_k$ be propositional variables. Well-formed formulas of \emph{propositional downwards closed team logic} (\PTo) are given by the following grammar:
\[
    \phi::= \,p\mid \neg p\mid\bot\mid \top\mid\dep(\vv{p},q)\mid\phi\wedge\phi\mid\phi\sor\phi\mid \phi\vee\phi\mid \phi\to\phi.
\] 
\end{dfn}

We call the formulas $p$, $\neg p$, $\bot$ and $\top$ \emph{propositional atoms}. The formula $\dep(\vv{p},q)$ is called a \emph{dependence atom}, and it shall be read as ``$q$ depends on $\vv{p}$\,''. The connective $\sor$ is called \emph{tensor} (disjunction), and the connectives $\vee$ and $\to$  are called \emph{intuitionistic disjunction} and \emph{intuitionistic implication}, respectively. The formula $\phi\to\bot$ is abbreviated as $\neg\phi$, and the team semantics to be given guarantees that the formula $\neg p$ and $p\to \bot$ are semantically equivalent.

Fragments of \PTo formed by certain sets of atoms and connectives in the standard way are called \emph{(propositional) logics of dependence}. The following table defines the syntax of the other logics of dependence we consider in this paper. 


\begin{center}
\begin{tabular}{|l|c|c|}
\multicolumn{1}{c}{\textbf{Logic}}&\multicolumn{1}{c}{\textbf{Atoms}}&\multicolumn{1}{c}{\textbf{Connectives}}\\\hline\hline
Propositional dependence logic (\PDo)&$p,\neg p,\bot,\top,\dep(\vv{p},q)$&$\wedge,\sor$\\\hline
Propositional inquisitive logic (\Inql)&$p,\bot,\top$&$\wedge,\vee,\to$\\\hline
\end{tabular}
\end{center}



Given any of the three logics $\LL \in \{\PDo,  \Inql, \PTo\}$, let \lang{\LL} denote the language of \LL. We say that a formula $\phi$ {\em is in} \lang{\LL} if all symbols in $\phi$ belong to \lang{\LL}. Clearly, \lang{\Inql} is the same as the language of intuitionistic propositional logic or intermediate logics. We will discuss the connection between \Inql and intermediate logics in the sequel. Note that 
formulas in \lang{\PDo} are assumed to be in \emph{(strict) negation normal form}, in the sense that negation is allowed only in front of propositional variables and dependence atoms can not be negated. We will revisit the issue about negation in Section 3.1.




For the semantics, propositional logics of dependence adopt \emph{team semantics}. A \emph{team} is a set of valuations, i.e., a set of functions $v:\rm{Prop}\to\{0,1\}$, where \rm{Prop} is the set of all propositional variables. 

\begin{dfn}\label{TS_PD}
We inductively define the notion of a formula $\phi$ in \lang{\PTo}  being \emph{true} on a team $X$, denoted by $X\models\phi$, as follows:
\begin{itemize}
\item $X\models p$ iff
for all $v\in X$, $v(p)=1$;
  \item $X\models\neg p$  iff
for all $v\in X$, $v(p)=0$;
  \item $X\models\bot$ iff $X=\emptyset$;
  \item $X \models \top$ for all teams $X$;
  \item $X\models\,\dep(\vv{p},q)$ iff for all $v,v'\in X$: $v(\vv{p})=s'(\vv{p})~\Longrightarrow ~v(q)=v'(q)$;
  \item $X\models\phi\wedge\psi$ iff $X\models\phi$ and
  $X\models\psi$;
  \item $X\models\phi\sor\psi$ iff there exist teams $Y,Z\subseteq X$ with $X=Y\cup Z$ such that 
  $Y\models\phi$ and $Z\models\psi$;
  \item $X\models \phi\bor\psi$ iff $X\models \phi$ or $X\models\psi$;
  \item $X\models \phi\to\psi$ iff for any team $Y\subseteq X$: 
  \(Y\models \phi\,\Longrightarrow \,Y\models\psi.\)
\end{itemize}
\end{dfn}

 

If $X\models\phi$ holds for all teams $X$, then we say that $\phi$ is \emph{valid}, denoted by $\models\phi$. For a finite set of formulas $\Gamma$, we write $\Gamma\models\phi$ and say that $\phi$ is a \emph{logical consequence} of $\Gamma$ if $X\models \bigwedge\Gamma\Longrightarrow X\models\phi$ holds for all teams $X$. In case $\Gamma=\{\phi\}$, we write simply $\phi\models\psi$ instead of $\{\phi\}\models\psi$. If $\phi\models\psi$ and $\psi\models\phi$, then we write $\phi\equiv\psi$ and say that $\phi$ and $\psi$ are \emph{semantically equivalent}. Two logics of dependence $\LL_1$ and $\LL_2$ are said to have the \emph{same expressive power} if for every $\LL_1$-formula $\phi$, $\phi\equiv\psi$ for some $\LL_2$-formula $\psi$, and vice versa.

The logics of dependence mentioned above are defined as follows. Since in this paper we consider the logics from a semantical point of view, using the team semantics, we define their finitary consequence relations semantically. 

\begin{dfn}[Consequence relations for logics of dependence]\label{def_logics}
For a logic $\LL \in \{\PDo,\Inql,\PTo \}$, formulas $\phi$ and finite sets of formulas $\Gamma$, $\Gamma\vdash_\LL \phi$ if and only if $\phi$ and all formulas in $\Gamma$ are in \lang{\LL} and $\Gamma \models \phi$. 
$\phi$ is \emph{valid in \LL}, or a {\em theorem} of \LL, if $\vdash_\LL\phi$, which is short for $\emptyset \vdash_\LL \phi$. Thus theorems of \PDo and \Inql are the restrictions of the theorems of \PTo to \lang{\PDo} and \lang{\Inql}, respectively. 
\end{dfn}

Because of the semantical definition of the consequence relation $\vdash_\LL$, soundness and completeness with respect to the team semantics trivially holds. We will see, however, that there do exist genuine syntactic characterizations of dependence logics, as given in Theorem~\ref{pid_completeness} and the comments thereafter. Since in this paper the methods are purely semantical, the semantically defined consequence relations suffice for our aims. 


We write $\phi(p_1,\dots,p_n)$ if the propositional variables occurring in $\phi$ are among  $p_1,\dots,p_n$. Given a set $V$ of propositional variables, a \emph{valuation on $V$} is a function $v:V\to \{0,1\}$, and a \emph{team on $V$} is a set of valuations on $V$. 

\begin{thm}\label{basic_prop_pt}
Let $\phi(p_1,\dots,p_n)$ be a formula and $\Gamma$ a set of formulas in \lang{\PTo}, and $X$ and $Y$ two teams. Then the following holds. 
\begin{description}
\item[(Locality)] If $\{v\upharpoonright \{p_1,\dots,p_n\}: v\in X\}=\{v\upharpoonright \{p_1,\dots,p_n\}:v\in Y\}$, then
\[X\models\phi\iff Y\models\phi.\] 
\item[(Downwards Closure Property)] If $X\models\phi$ and $Y\subseteq X$, then $Y\models\phi$.
\item[(Empty Team Property)] $\emptyset\models\phi$.
\item[(Deduction Theorem)] $\Ga,\phi \models \psi$ if and only if $\Ga \models \phi \imp \psi$. 
\item[(Compactness Theorem)] If $\Gamma\models\phi$, then there exists a finite set $\Delta\subseteq \Gamma$ such that $\Delta\models\phi$.
\end{description}
\end{thm}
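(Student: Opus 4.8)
The plan is to prove each of the five clauses of Theorem~\ref{basic_prop_pt} by a straightforward induction on the structure of $\phi$, using the clauses of Definition~\ref{TS_PD}, and then to derive the last two clauses (Deduction and Compactness) from the first three together with the semantic definition of $\models$. So I would first treat \textbf{Locality}: the base cases ($p$, $\neg p$, $\bot$, $\top$, $\dep(\vv p,q)$) depend only on the values $v(p_i)$ for $p_i$ among $p_1,\dots,p_n$, so agreement of the restricted teams gives equivalence immediately; for $\wedge$ and $\vee$ the inductive step is immediate; for $\sor$ and $\to$ one must observe that the subteam quantifiers range over subteams whose restrictions to $\{p_1,\dots,p_n\}$ also match up, so a decomposition $X=Y\cup Z$ with $Y\models\phi$, $Z\models\psi$ on the $X$ side can be transported to the $Y$ side and vice versa. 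This "transporting subteams across a change of team with the same restriction" is the one place that needs a little care, and I expect it to be the main (mild) obstacle: one has to check that the witnessing subteams can be chosen compatibly, which is where it helps to first reduce, via Locality applied to a smaller variable set, to teams of valuations on $\{p_1,\dots,p_n\}$ only.

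Next I would prove \textbf{Downwards Closure} and \textbf{Empty Team} simultaneously by induction. For Empty Team the base cases are clear ($\emptyset\models\bot$ since $\emptyset=\emptyset$, and the universally quantified clauses for $p$, $\neg p$, $\dep(\vv p,q)$ hold vacuously), and the inductive cases follow since $\emptyset=\emptyset\cup\emptyset$ and $\emptyset\subseteq\emptyset$. For Downwards Closure: base cases for $p$, $\neg p$, $\top$ are immediate, $\bot$ is immediate (if $X=\emptyset$ and $Y\subseteq X$ then $Y=\emptyset$), and $\dep(\vv p,q)$ is immediate since the condition quantifies over pairs in the team; $\wedge$ and $\vee$ are immediate; for $\sor$, if $X=Y'\cup Z'$ with $Y'\models\phi$, $Z'\models\psi$ and $Y\subseteq X$, then $Y=(Y'\cap Y)\cup(Z'\cap Y)$ and the induction hypothesis applies to the two pieces; for $\to$, if $X\models\phi\to\psi$ and $Y\subseteq X$, then any subteam of $Y$ is a subteam of $X$, so the implication clause for $Y$ follows from that for $X$.

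Finally, the \textbf{Deduction Theorem} is essentially just unfolding definitions together with Downwards Closure: $\Gamma,\phi\models\psi$ means $X\models\bigwedge\Gamma\wedge\phi$ implies $X\models\psi$ for all $X$; to show $\Gamma\models\phi\to\psi$, take $X\models\bigwedge\Gamma$ and $Y\subseteq X$ with $Y\models\phi$, note $Y\models\bigwedge\Gamma$ by Downwards Closure, hence $Y\models\psi$; conversely if $\Gamma\models\phi\to\psi$ and $X\models\bigwedge\Gamma\wedge\phi$, apply the $\to$ clause with $Y=X$. For \textbf{Compactness}, since $\Gamma\models\phi$ with $\phi=\phi(p_1,\dots,p_n)$, Locality lets us restrict attention to teams of valuations on the finite set $\{p_1,\dots,p_n\}$, of which there are only finitely many; but one still has to handle the case that formulas of $\Gamma$ mention other variables. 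The clean route is: by Locality each relevant formula depends only on finitely many variables, and one argues that if no finite $\Delta\subseteq\Gamma$ satisfies $\Delta\models\phi$ then one can build (using König's lemma, or directly since the relevant space of teams over the variables of $\phi$ together with a growing initial segment of $\Gamma$ is finite at each stage) a team witnessing $\Gamma\not\models\phi$. I expect Compactness to be the only clause requiring genuine argument rather than routine induction, and I would present it carefully; the other four clauses I would dispatch by saying "by a routine induction on $\phi$, using Definition~\ref{TS_PD}" and spelling out only the $\sor$ and $\to$ steps.
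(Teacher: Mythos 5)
The paper itself states Theorem~\ref{basic_prop_pt} without proof, treating these as standard facts from the cited literature (\cite{VY_PD,InquiLog}), so there is no in-paper argument to compare against; I can only judge your plan on its own terms. For four of the five clauses it is correct and is the standard argument: the inductions for Locality, Downwards Closure and Empty Team go through exactly as you describe, and the Deduction Theorem is indeed just Downwards Closure plus unfolding the $\to$-clause with $Y=X$. One small repair in the Locality induction: "reducing to teams of valuations on $\{p_1,\dots,p_n\}$" is not literally available, since in this paper teams consist of valuations on all of $\mathrm{Prop}$; the clean way to do the $\sor$ and $\to$ steps is to transport a subteam $Y'\subseteq Y$ to the matched subteam $X':=\{u\in X: u\upharpoonright V\in\{v\upharpoonright V: v\in Y'\}\}$, check that $X'$ and $Y'$ have equal restrictions to $V=\{p_1,\dots,p_n\}$ (using that $X$ and $Y$ do), and then apply the induction hypothesis. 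That is a presentational fix, not a gap.

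The genuine gap is in Compactness, which you rightly single out but whose sketch does not yet confront the two real difficulties. First, it is not enough that "the space of teams over the variables of $\phi$ is finite": the formulas of $\Gamma$ constrain the team on \emph{their} variables too, and $\Gamma$ may mention infinitely many variables, so you must work with an increasing exhaustion $V_0\subseteq V_1\subseteq\cdots$ of the variables of $\Gamma\cup\{\phi\}$ (with $V_0$ the variables of $\phi$), and for each level you need a single restriction team compatible with \emph{all} finite $\Delta\subseteq\Gamma$ simultaneously; this requires a pigeonhole/directedness step (the sets $\{X\upharpoonright V_k : X\models\bigwedge\Delta,\ X\not\models\phi\}$, for finite $\Delta$, form a directed family of nonempty subsets of a finite set, hence have a common element), and then K\"onig's lemma over $k$ to get a coherent sequence $T^0,T^1,\dots$ with $T^{k+1}\upharpoonright V_k=T^k$. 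Second, and this is the step your sketch silently assumes, a coherent family of finite-variable restriction teams must be realized by an actual team on $\mathrm{Prop}$: teams are sets of \emph{total} valuations, and proving that $X^*:=\{v: \forall k\ (v\upharpoonright V_k\in T^k)\}$ satisfies $X^*\upharpoonright V_k=T^k$ needs a second compactness/K\"onig (or Tychonoff, if uncountably many variables) argument at the level of single valuations. Once $X^*$ is in hand, Locality gives $X^*\models\gamma$ for every $\gamma\in\Gamma$ and $X^*\not\models\phi$, contradicting $\Gamma\models\phi$. With these two ingredients spelled out your plan works; without them the construction of the witnessing team is asserted rather than proved.
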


Given a formula $\phi$ and a finite set $\{\phi_i\mid i\in I\}$ of formulas we introduce a meta-symbol $\bigsqcup$ and use 
$\phi \bigsqcup_{i\in I} \phi_i$ as an abbreviation for the statement: For all teams $X$: 
$X\models \phi_i$ implies $X \models \phi$ for all $i\in I$, and $X \models \phi$ implies 
$X\models \phi_i$ for some $i\in I$.   

\begin{thm}[Disjunction property]\label{DP}
Let $\phi$ be a formula and $\{\phi_i\mid i\in I\}$ a finite set of formulas in \lang{\LL}. If $\phi\bigsqcup_{i\in I}\phi_i$ and $\models\phi$, then $\models\phi_i$ for some $i\in I$.
\end{thm}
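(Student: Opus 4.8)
The plan is to run a proof by contradiction, building a single team that simultaneously refutes every $\phi_i$ while still satisfying $\phi$. So I would assume that $\not\models\phi_i$ for each $i\in I$, choose for every $i$ a team $X_i$ with $X_i\not\models\phi_i$, and set $X:=\bigcup_{i\in I}X_i$.

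The one ingredient that makes this work is the Downwards Closure Property of Theorem~\ref{basic_prop_pt}, read contrapositively: if a team refutes a formula, then so does every team containing it. Since $X_i\subseteq X$ and $X_i\not\models\phi_i$, this yields $X\not\models\phi_i$, and as $i$ was arbitrary, $X$ refutes all of the $\phi_i$. On the other hand, $\models\phi$ gives $X\models\phi$, so the second clause of $\phi\bigsqcup_{i\in I}\phi_i$ forces $X\models\phi_i$ for some $i\in I$ — a contradiction. Hence $\models\phi_i$ for some $i\in I$.

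I do not foresee a genuine obstacle; the argument is short. The only points that need a moment's attention are that it is refutation, not satisfaction, that propagates upward along $\subseteq$ (which is exactly why the union of the $X_i$, rather than some other combination, is the right construction), and the degenerate case $I=\emptyset$: there the second clause of $\phi\bigsqcup_{i\in\emptyset}\phi_i$ already asserts $X\not\models\phi$ for every team $X$, contradicting $\models\phi$, so the hypotheses are jointly unsatisfiable and nothing remains to prove. If one preferred to work with finite teams throughout, Locality would allow shrinking each $X_i$ to a team on the finitely many variables occurring in the formulas, but this refinement is not needed here.
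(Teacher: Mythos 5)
Your argument is correct. It does, however, take a different route from the paper's own proof, which is direct rather than by contradiction: there one fixes the set $V$ of variables occurring in $\phi$ and the $\phi_i$, takes the single full team $X=\{0,1\}^V$, notes $X\models\phi$ and hence $X\models\phi_i$ for some $i$ by the second clause of $\bigsqcup$, and then uses the Downwards Closure Property (every team on $V$ is a subteam of $X$) together with Locality to conclude $\models\phi_i$. You instead aggregate one counterexample team per $\phi_i$ into a union and use downwards closure contrapositively, so that the union refutes every $\phi_i$ while still satisfying $\phi$, contradicting $\bigsqcup$. Both proofs hinge on exactly the same key fact, downwards closure, but yours dispenses with Locality -- legitimately, since teams are by definition sets of valuations on all of $\mathrm{Prop}$, so the union of your chosen teams is again a team -- and, as you observe, it does not even use finiteness of $I$, nor the first clause of $\bigsqcup$. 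What the paper's version buys is a canonical witnessing team (the full team on $V$) at which the uniform index $i$ is already realized; what yours buys is a slightly leaner set of prerequisites. Your treatment of the degenerate case $I=\emptyset$ is also fine, and is handled only implicitly in the paper.
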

\begin{proof}
Let $V=\{p_1,\dots,p_n\}$ be the set of propositional variables occurring in $\phi$ and $\{\phi_i\mid i\in I\}$. Since $\models\phi$, for the team $X=\{0,1\}^V$, we have that $X\models\phi$. It follows from $\phi \bigsqcup_{i\in I} \phi_i$ that $X\models\phi_i$ for some $i\in I$. Noting that every team $Y$ on $V$ is a subset of $X$, by the downwards closure property we obtain that $Y\models\phi_i$, which implies $\models\phi_i$ by locality.
\end{proof}



A formula of \PTo is said to be \emph{classical} if it does not contain any dependence atoms or intuitionistic disjunction. Classical formulas $\phi$ of \PTo are \emph{flat}, that is,
\[X\models\phi\iff \forall v\in X,~\{v\}\models\phi\]
holds for all teams $X$. 
The following lemma shows that classical tautologies of \PTo are exactly the tautologies of classical propositional logic.


\begin{lem}\label{flatv=classicalv}
For any classical formula $\phi$ in \lang{\PTo}, identifying  tensor  disjunction with classical disjunction of \CPC, we have that
\(\models_\CPC\phi\iff\models_\PTo\phi.\)
\end{lem}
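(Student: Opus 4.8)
The plan is to exploit flatness. Since $\phi$ is classical, it is flat by the observation preceding the lemma, so for every team $X$ we have $X\models\phi$ iff $\{v\}\models\phi$ for all $v\in X$; consequently $\models_\PTo\phi$ holds if and only if $\{v\}\models\phi$ for every valuation $v$. It therefore suffices to prove, by induction on the structure of the classical formula $\phi$, the equivalence
\[
\{v\}\models_\PTo\phi \iff v\models_\CPC\phi,
\]
where on the right-hand side $\sor$ is read as classical disjunction. Granting this, we obtain $\models_\PTo\phi$ iff $\{v\}\models\phi$ for all $v$ iff $v\models_\CPC\phi$ for all $v$ iff $\models_\CPC\phi$, which is the claim.

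For the induction, the atomic cases $p,\neg p,\bot,\top$ are immediate from Definition~\ref{TS_PD}: for instance $\{v\}\models p$ iff $v(p)=1$ iff $v\models_\CPC p$, and $\{v\}\models\bot$ is false since $\{v\}\neq\emptyset$, matching the classical clause. The conjunction case is trivial. For $\phi\sor\psi$, observe that any decomposition $\{v\}=Y\cup Z$ with $Y,Z\subseteq\{v\}$ must have one of $Y,Z$ equal to $\{v\}$; since $\emptyset\models\chi$ for every formula $\chi$ by the Empty Team Property, it follows that $\{v\}\models\phi\sor\psi$ iff $\{v\}\models\phi$ or $\{v\}\models\psi$, which by the induction hypothesis is $v\models_\CPC\phi$ or $v\models_\CPC\psi$, i.e.\ $v\models_\CPC\phi\vee\psi$. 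For $\phi\to\psi$, the only subteams of $\{v\}$ are $\emptyset$ and $\{v\}$, and $\emptyset\models\psi$ always holds, so $\{v\}\models\phi\to\psi$ iff $\big(\{v\}\models\phi$ implies $\{v\}\models\psi\big)$, which by the induction hypothesis is exactly the classical clause for implication.

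I do not anticipate a serious obstacle: the only point requiring care is verifying that the team-semantic clauses for $\sor$ and $\to$ collapse to the classical ones when evaluated on singleton teams, and this is precisely where the Empty Team Property is used. (Alternatively, the whole argument can be summarised as: a classical formula is flat, and team-semantic satisfaction agrees with classical satisfaction on singletons, the induction above making this explicit.)
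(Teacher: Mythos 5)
Your proof is correct and follows the same route as the paper: the paper's (one-line) proof is exactly the induction showing $v\models_{\CPC}\phi\iff\{v\}\models_{\PTo}\phi$ on singleton teams, combined with the flatness of classical formulas to pass between singletons and arbitrary teams. Your write-up just makes the singleton cases for $\sor$ and $\to$ (via the Empty Team Property) explicit, which is fine.
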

\begin{proof}
An easy inductive proof shows that $v\models_{\CPC}\phi\iff\{v\}\models_\PTo\phi$ for all valuations $v$ and all classical formulas $\phi$.
\end{proof}

Having the same syntax as intuitionistic logic, the logic \Inql has a close relationship with intermediate logics between \ND and \ML. In \cite{InquiLog}, a Hilbert-style deductive system for \Inql is given. The axioms of this system will play a role in this paper, so we present the system in detail as follows. 

\begin{thm}[\cite{InquiLog}]\label{pid_completeness}
\Inql is sound and strongly complete with respect to the following Hilbert-style deductive system:
\begin{description}
\item[Axioms:] \
\begin{enumerate}
\item all theorems of \/\IPC 
\item $\neg\neg p\to p$ for all $p\in\rm{Prop}$
\item   all substitution instances of $\mathrm{ND}_k$ for all $k\in\mathbb{N}$:
\[(\mathrm{ND}_k)~~~~~~\big(\neg \phi\to\bigvee_{1\leq i\leq k}\neg \psi_i\big)\to \bigvee_{1\leq i\leq k}(\neg \phi\to\neg \psi_i).\]
\end{enumerate}
\item[Rule:] \
\begin{description}
\item[ Modus Ponens:] \AxiomC{$\phi\to \psi$} \AxiomC{$\psi$}\BinaryInfC{$\psi$} \DisplayProof ~($\mathsf{MP}$)
\end{description} 
\end{description}
\end{thm}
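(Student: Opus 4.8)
The plan is to follow the standard route to this completeness theorem, due to Ciardelli and Roelofsen \cite{InquiLog}: verify soundness directly, and obtain completeness through a \emph{resolution normal form} that reduces, provably in the calculus, every formula to an intuitionistic disjunction of classical (i.e.\ disjunction-free) formulas.

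For soundness I would check that each axiom is valid and that $\mathsf{MP}$ preserves consequence. Soundness of $\mathsf{MP}$ is immediate from the team clause for $\to$ (apply it to $X\subseteq X$). That every theorem of \IPC is valid follows because, for a fixed team $X$, the down-sets of the poset $(\mathcal P(X),\subseteq)$ form a Heyting algebra whose meet, join, Heyting implication and least element $\{\emptyset\}$ are exactly the team-semantic $\wedge,\ior,\to,\bot$, so intuitionistic reasoning is sound on teams. The axiom $\neg\neg p\to p$ is valid because $p$ is flat: $X\models\neg\neg p$ says that no nonempty subteam of $X$ satisfies $\neg p$, which by flatness of $p$ means $v(p)=1$ for all $v\in X$, i.e.\ $X\models p$; thus $\neg\neg p\equiv p$. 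For $\mathrm{ND}_k$ the point is that $\neg\phi$ is flat for \emph{every} $\phi$ (using only the Downwards Closure Property of Theorem~\ref{basic_prop_pt}): given $X\models\neg\phi\to\bigvee_i\neg\psi_i$, put $Y=\{v\in X:\{v\}\models\neg\phi\}$; then $Y\models\neg\phi$, so $Y\models\neg\psi_i$ for some $i$, and any $Z\subseteq X$ with $Z\models\neg\phi$ satisfies $Z\subseteq Y$ by flatness, hence $Z\models\neg\psi_i$; so $X\models\bigvee_i(\neg\phi\to\neg\psi_i)$. The same argument handles all substitution instances of $\mathrm{ND}_k$.

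For completeness I would first reduce to the weak case: if $\Gamma\models\phi$, then by the Compactness and Deduction Theorems (Theorem~\ref{basic_prop_pt}) the formula $\bigwedge\Gamma_0\to\phi$ is valid for some finite $\Gamma_0\subseteq\Gamma$, so it is enough to prove that every valid formula is derivable. Then I would introduce the finite set $\mathcal R(\psi)$ of \emph{resolutions} of $\psi$, defined by recursion: $\mathcal R(p)=\{p\}$, $\mathcal R(\bot)=\{\bot\}$, $\mathcal R(\top)=\{\top\}$, $\mathcal R(\psi\wedge\chi)=\{\alpha\wedge\beta:\alpha\in\mathcal R(\psi),\,\beta\in\mathcal R(\chi)\}$, $\mathcal R(\psi\ior\chi)=\mathcal R(\psi)\cup\mathcal R(\chi)$, and $\mathcal R(\psi\to\chi)=\{\bigwedge_{\alpha\in\mathcal R(\psi)}(\alpha\to f(\alpha)):f\colon\mathcal R(\psi)\to\mathcal R(\chi)\}$; note that every member of $\mathcal R(\psi)$ is classical. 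The core step is to prove, by induction on $\psi$, that $\psi\leftrightarrow\bigvee\mathcal R(\psi)$ is derivable in the calculus. The atomic, $\wedge$ and $\ior$ cases are routine intuitionistic manipulations. In the implication case, using the induction hypothesis to replace $\psi$ and $\chi$ by their normal forms, one is reduced, via intuitionistically valid distribution laws, to deriving $(\alpha\to\bigvee_\beta\beta)\leftrightarrow\bigvee_\beta(\alpha\to\beta)$ for each classical $\alpha$ and classical $\beta$'s; here I would invoke the auxiliary lemma that \IPC together with the axioms $\neg\neg p\to p$ derives $\neg\neg\gamma\to\gamma$ for every disjunction-free $\gamma$ (an easy induction on $\gamma$: the $\wedge$ and $\to$ steps reduce to the induction hypothesis, the atomic step is the axiom, with $\bot,\top$ trivial), so that $\gamma\leftrightarrow\neg(\neg\gamma)$ is derivable; rewriting $\alpha$ as $\neg(\neg\alpha)$ and each $\beta$ as $\neg(\neg\beta)$ turns the nontrivial direction of that equivalence into a substitution instance of $\mathrm{ND}_k$ (with $\phi:=\neg\alpha$ and $\psi_i:=\neg\beta_i$).

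Finally, for a valid $\psi$, the normal form equivalence (now known, via soundness, to be valid) together with validity of $\psi$ gives $\models\bigvee\mathcal R(\psi)$; since $X\models\bigvee\mathcal R(\psi)$ iff $X\models\alpha$ for some $\alpha\in\mathcal R(\psi)$, the Disjunction Property (Theorem~\ref{DP}) yields a valid $\alpha\in\mathcal R(\psi)$; as $\alpha$ is classical, Lemma~\ref{flatv=classicalv} makes it a tautology of \CPC, whence $\neg\neg\alpha$ is a theorem of \IPC by Glivenko's theorem, and so by the auxiliary lemma $\alpha$ — and therefore $\bigvee\mathcal R(\psi)$ and $\psi$ — is derivable. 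I expect the main obstacle to be the derivability of the normal form in the implication case: one must iterate the construction carefully so that the relevant antecedents become provable equivalents of negations, since that is precisely what allows the schema $\mathrm{ND}_k$, which only speaks about negated formulas, to act as a general Kreisel--Putnam-style principle $(\alpha\to\bigvee_i\beta_i)\to\bigvee_i(\alpha\to\beta_i)$.
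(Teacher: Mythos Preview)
The paper does not prove this theorem at all: it is quoted verbatim from \cite{InquiLog} and used as a black box. So there is no ``paper's own proof'' to compare against. Your proposal is essentially the argument of Ciardelli and Roelofsen, and it is correct in outline; a couple of remarks may help you tighten it.

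First, your soundness argument for $\mathrm{ND}_k$ is fine, but note that you only need flatness of $\neg\phi$ and downward closure of $\neg\psi_i$; the latter is what lets you pass from $Y\models\neg\psi_i$ to $Z\models\neg\psi_i$ for $Z\subseteq Y$. Second, in the implication step of the normal form you correctly isolate the nontrivial direction $(\alpha\to\bigvee_i\beta_i)\to\bigvee_i(\alpha\to\beta_i)$ for classical $\alpha,\beta_i$; just make explicit that after this step one still needs the intuitionistically valid distribution $\bigwedge_\alpha\bigvee_\beta(\alpha\to\beta)\leftrightarrow\bigvee_f\bigwedge_\alpha(\alpha\to f(\alpha))$ to reach the resolution shape. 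Third, in the final paragraph you invoke Theorem~\ref{DP} with $\phi:=\bigvee\mathcal R(\psi)$; this is legitimate because trivially $\bigvee_{i\in I}\phi_i\ \bigsqcup_{i\in I}\ \phi_i$ by the team clause for $\vee$, so the hypothesis of Theorem~\ref{DP} is met.

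One small gap to close: your appeal to the Compactness Theorem for the reduction to weak completeness is circular as stated, since compactness in Theorem~\ref{basic_prop_pt} is itself typically derived from (strong) completeness. You can avoid this simply by observing that the deductive system is finitary, so once weak completeness is in hand, strong completeness follows directly: if $\Gamma\models\phi$ fails to be derivable, build a maximal $\Gamma'\supseteq\Gamma$ with $\Gamma'\not\vdash\phi$ and use the normal-form machinery to extract a countermodel; or, more simply, note that team semantics is local (Theorem~\ref{basic_prop_pt}), so $\Gamma\models\phi$ already implies $\Gamma_0\models\phi$ for the finite $\Gamma_0\subseteq\Gamma$ sharing variables with $\phi$ --- no compactness needed.
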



\begin{rmk}\label{remarkpid}
\Inql extended with dependence atoms is called \emph{propositional intuitionistic dependence logic} (\PIDo) in the literature (see e.g., \cite{Yang_dissertation,VY_PD}).
 As noted in \cite{Yang_dissertation,VY_PD}, \PIDo and \Inql have the same expressive power, as dependence atoms are definable in \Inql:
 \begin{equation}\label{dep_df}
\dep(p_1,\dots,p_n,q)\equiv (p_1\vee\neg p_1)\wedge\dots\wedge(p_n\vee\neg p_n)\to (q\vee\neg q).
\end{equation}
  Adding an axiom that corresponds to the above equivalence to the deductive system of \Inql,  one obtains a complete axiomatization for \PIDo. For simplicity, we will not discuss the logic \PIDo in this paper, but we remark that results obtained in this paper can be easily generalized to \PIDo.
\end{rmk}

The logic \PDo was first axiomatized by a natural deduction system in \cite{Yang_dissertation,VY_PD}, and a Hilbert-style axiomatization and a labelled tableau calculus for \PDo can be found in \cite{SanoVirtema2014}. Based on these, a natural deductive system for the   fragment of \PTo without dependence atoms was given in \cite{Ciardelli2015}. Adding to the deductive system in   \cite{Ciardelli2015} obvious rules for dependence atom that correspond to the equivalence in (\ref{dep_df}), 
one easily obtains a complete natural deductive system for full \PTo. Interested readers are referred to the literature given for the exact definitions of the deductive systems. Throughout this paper, we take for granted the strong completeness theorem for these logics.

It is important to note that the deductive systems for \PDo, \Inql and \PTo do \emph{not} admit \emph{uniform substitution}. 
Here substitutions, a crucial notion in this paper, are defined as follows. The definition is sufficiently general to apply to both propositional logics of dependence and intermediate logics that we consider later in the paper. 

\begin{dfn}[Substitution]\label{def_subst}
A \emph{substitution} of a propositional logic or theory \LL is a mapping $\sigma$ from the set of all formulas in \lang{\LL} to the set of all formulas in \lang{\LL}, that commutes with the connectives and atoms. 
\end{dfn}

\begin{dfn}
Let $\vdash_\LL$ be a consequence relation of a logic or  theory \LL. A substitution $\sigma$ is called a \emph{$\vdash_\LL$-substitution} if $\vdash_\LL$  is \emph{closed under} $\sigma$, i.e., for all formulas 
$\phi,\psi$ in \lang{\LL},
\[\phi\vdash_\LL\psi\Longrightarrow \sigma(\phi)\vdash_\LL\sigma(\psi).\] 
If $\vdash_\LL$ is closed under all substitutions, then we say that $\vdash_\LL$ is \emph{structural}.
\end{dfn}

The consequence relations of the logics \PDo, \Inql and \PTo are not structural, because, for example,  $p\sor p\vdash_\PDo p$ and $\vdash_\Inql \neg\neg p \imp p$, but $\dep(p)\sor\dep(p)\nvdash_\PDo\dep(p)$ and $\nvdash_\Inql \neg\neg(p\vee\neg p) \imp p\vee\neg p$.

\subsection{Normal forms}\label{secnf}

In this section, we recall from \cite{InquiLog} and \cite{VY_PD} the disjunctive normal forms for formulas of \PDo,  \Inql and \PTo. 
These normal forms, reminiscent of the disjunctive normal form in classical logic, play an important role in the main proofs of this paper and are defined as follows.

Fix $V=\{p_1,\dots,p_n\}$. Let $X$ be a nonempty team on $V$. For each of the logics \PDo,  \Inql and \PTo, we define a formula $\Theta_X$ as follows:
\begin{numcases}{\Theta_X:=}
\displaystyle\bigsor_{v\in X}(p_{1}^{v(p_1)}\wedge\dots\wedge p_{n}^{v(p_n)})&\text{ for }\PDo, \label{Theta_X_df1}\\
\displaystyle\neg\neg\bigbor_{v\in X}(p_{1}^{v(p_1)}\wedge\dots\wedge p_{n}^{v(p_n)})&\text{ for } \Inql,\ \PTo, \label{Theta_X_df2}
\end{numcases}


where $p^1:=p$ and $p^0:=\neg p$ and we stipulate that $\Theta_\emptyset:=\bot$. The reader can verify readily that the above two formulas are semantically equivalent. This is why we decide to be sloppy here and use the same notation $\Theta_X$ to stand for two syntactically different formulas. We tacitly assume that $\Theta_X$ is given by \eqref{Theta_X_df1} in the context of $\PDo$  and by \eqref{Theta_X_df2} in the context of  \Inql. For \PTo we could as well have chosen  \eqref{Theta_X_df1} as the definition of $\Theta_X$, as both defining formulas belong to \lang{\PTo} and are equivalent. 

With respect to the domain $V$, the formula $\Theta_X$ defines the team $X$ (module subteams), as stated in the following lemma, whose proof is left to the reader or see \cite{VY_PD}.

\begin{lem}\label{theta_prop}
Let $X$ and $Y$ be teams on $V$. For  the logics \PDo, \Inql and \PTo, we have that
\(Y\models\Theta_X\iff Y\subseteq X.\)
\end{lem}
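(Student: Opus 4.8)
The plan is to read off the equivalence directly from the team-semantic clauses, working with the \PDo-version $\Theta_X=\bigsor_{v\in X}\chi_v$, where I abbreviate $\chi_v:=p_1^{v(p_1)}\wedge\dots\wedge p_n^{v(p_n)}$; since the two defining formulas for $\Theta_X$ were observed above to be semantically equivalent, and all three logics share the same team semantics, this settles \PDo, \Inql and \PTo at once. (Alternatively one unfolds $\neg\neg\bigbor_{v\in X}\chi_v=(\bigbor_{v\in X}\chi_v\to\bot)\to\bot$ via the clauses for $\to$, $\bot$ and $\bor$ and argues in the same way.) The case $X=\emptyset$ is immediate: $\Theta_\emptyset=\bot$, and by the semantics of $\bot$ we have $Y\models\bot$ iff $Y=\emptyset$ iff $Y\subseteq\emptyset=X$. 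So from now on assume $X\neq\emptyset$.

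The one observation that makes everything work is that each disjunct $\chi_v$ defines exactly the singleton team $\{v\}$ on the fixed finite domain $V$. Indeed $\chi_v$ is a classical formula, hence flat, so by Lemma~\ref{flatv=classicalv} together with locality (Theorem~\ref{basic_prop_pt}), for any team $Z$ on $V$ we have $Z\models\chi_v$ iff $w(p_i)=v(p_i)$ for all $w\in Z$ and all $i\leq n$, i.e.\ iff $w=v$ for every $w\in Z$, i.e.\ iff $Z\subseteq\{v\}$; in particular $\emptyset\models\chi_v$ by the empty team property.

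For the direction $Y\subseteq X\Rightarrow Y\models\Theta_X$ I would exhibit the tensor decomposition explicitly: for $v\in X$ put $Y_v:=\{v\}$ if $v\in Y$ and $Y_v:=\emptyset$ otherwise. Then each $Y_v\subseteq Y$, each $Y_v\models\chi_v$ by the previous paragraph, and $\bigcup_{v\in X}Y_v=Y$ precisely because $Y\subseteq X$; hence $Y\models\bigsor_{v\in X}\chi_v$ by the (iterated) clause for $\sor$. Conversely, if $Y\models\bigsor_{v\in X}\chi_v$, the clause for $\sor$ gives teams $Y_v\subseteq Y$ for $v\in X$ with $Y=\bigcup_{v\in X}Y_v$ and $Y_v\models\chi_v$, so $Y_v\subseteq\{v\}\subseteq X$ for each $v$, and therefore $Y=\bigcup_{v\in X}Y_v\subseteq X$.

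I do not expect a genuine obstacle here; the only points that need a little care are the bookkeeping around the empty team — both the convention $\Theta_\emptyset:=\bot$ and the admissibility of empty pieces $Y_v$ in the tensor decomposition (justified by the empty team property) — and making explicit that it is flatness plus locality on the finite set $V$ that forces $\chi_v$ to define the singleton $\{v\}$ rather than some larger team.
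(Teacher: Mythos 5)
Your proof is correct. The paper does not actually give an argument for this lemma --- it is explicitly left to the reader, with a pointer to \cite{VY_PD} --- and your direct unwinding of the semantic clauses (the key observation that each conjunct $p_1^{v(p_1)}\wedge\dots\wedge p_n^{v(p_n)}$ defines exactly the subteams of $\{v\}$ on $V$, the explicit tensor decomposition into singletons and empty pieces, and the separate case $\Theta_\emptyset=\bot$) is precisely the routine verification the authors intend. The only shortcut is handling \Inql and \PTo by citing the semantic equivalence of \eqref{Theta_X_df1} and \eqref{Theta_X_df2}, but the paper itself asserts that equivalence as readily verifiable, and your sketch of unfolding $\neg\neg\bigbor$ directly covers it in any case, so nothing is missing.
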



The set $\llbracket \phi\rrbracket=\{X\subseteq \{0,1\}^V: X\models \phi\}$ is nonempty (as $\emptyset\in \llbracket \phi\rrbracket$) and \emph{downwards closed}, i.e., $Y\subseteq X\in \llbracket \phi\rrbracket\Longrightarrow Y\in \llbracket \phi\rrbracket$. We say that a propositional logic \LL of dependence is \emph{expressively complete}, if every nonempty downwards closed collection $\mathcal{K}$ of teams on $V$ is definable by a formula $\phi$ in \lang{\LL}, i.e., $\mathcal{K}=\llbracket \phi\rrbracket$.


\begin{thm}[\cite{InquiLog}\cite{VY_PD}]\label{NF_lm}
\begin{description}
\item[(i)] All of the logics \PTo, \PDo and \Inql are expressively complete and have the same expressive power.
\item[(ii) (Normal Forms)] Let $\phi(p_1,\dots,p_n)$ be a  formula in \lang{\PTo} or \lang{\PDo} or \lang{\Inql}. There exists a finite collection $\{X_i\mid i\in I\}$ of teams on $V$ such that $\phi\bigsqcup_{i\in I}\Theta_{X_i}$. In particular, $\phi\equiv\bigvee_{i\in I}\Theta_{X_i}$ holds for $\PTo$ and $\Inql$.
\end{description}
\end{thm}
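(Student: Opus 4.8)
The plan is to prove the two parts of Theorem~\ref{NF_lm} by a simultaneous induction on the structure of $\phi$, relying throughout on the basic semantic properties from Theorem~\ref{basic_prop_pt} (locality, downwards closure, empty team property) and on Lemma~\ref{theta_prop}. For part (ii), given $\phi(p_1,\dots,p_n)$, I would set $\mathcal{K}=\llbracket\phi\rrbracket=\{X\subseteq\{0,1\}^V: X\models\phi\}$ and let $I$ index the \emph{maximal} teams in $\mathcal{K}$ (equivalently, just take $\{X_i\mid i\in I\}=\mathcal{K}$ itself, which is finite since $V$ is finite). The claim $\phi\bigsqcup_{i\in I}\Theta_{X_i}$ then unwinds as: for every team $X$, if $X\models\Theta_{X_i}$ for some $i$ then by Lemma~\ref{theta_prop} we have $X\subseteq X_i$, and since $X_i\models\phi$ downwards closure gives $X\models\phi$; conversely if $X\models\phi$ then by locality we may assume $X\subseteq\{0,1\}^V$, so $X\in\mathcal{K}$, hence $X=X_i$ for some $i$ and thus $X\models\Theta_{X_i}$ by Lemma~\ref{theta_prop} again. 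For the ``in particular'' clause over $\PTo$ and $\Inql$, the meta-disjunction $\bigsqcup$ collapses to the object-level $\bigvee$ precisely because $\vee$ has the splitting-free semantics $X\models\phi\vee\psi\iff X\models\phi$ or $X\models\psi$; so $\phi\bigsqcup_{i\in I}\Theta_{X_i}$ immediately yields $\phi\equiv\bigvee_{i\in I}\Theta_{X_i}$.

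The genuinely substantive step is part (i), expressive completeness, and here the two halves feed each other. One direction of ``every nonempty downwards closed $\mathcal{K}$ is definable'' is handed to us almost for free once part (ii) is in place: given such a $\mathcal{K}$ on $V$, enumerate its members as $X_1,\dots,X_m$ and set $\phi:=\bigvee_{i=1}^m\Theta_{X_i}$ (for $\PTo$, $\Inql$) or $\phi:=\bigsor$-free analogue built from the $\PDo$-style $\Theta_{X_i}$ combined with $\vee$; the computation in the previous paragraph shows $\llbracket\phi\rrbracket=\mathcal{K}$. For $\PDo$, where there is no intuitionistic disjunction available, one must instead argue that the $\sor$-disjunction $\bigsor_{i}\Theta_{X_i}$ with each $\Theta_{X_i}$ of the form~\eqref{Theta_X_df1} already defines the \emph{downwards closure of} $\bigcup_i X_i$ intersected appropriately — more precisely, one shows $\bigsor$ of such basic ``team-describing'' formulas ranges exactly over the downwards closed families, using that every downwards closed $\mathcal{K}$ is the downward closure of its maximal elements and that $\sor$ distributes over this. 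This is the place where the restricted syntax of $\PDo$ bites, and it is the main obstacle: establishing that the $\sor,\wedge$-fragment over propositional and dependence atoms still captures \emph{all} downwards closed nonempty families requires knowing that dependence atoms $\dep(\vv p,q)$ contribute exactly the right expressive power, which one checks via the identity $\dep(\vv p,q)\equiv\bigsor_{v}(\text{literals for }\vv p\text{ and }q)$ over the relevant domain, or simply cites~\cite{VY_PD}.

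That ``same expressive power'' is then a corollary: each of the three logics defines exactly the class of nonempty downwards closed families of teams on any finite $V$, so any formula of one can be matched (up to $\equiv$) by a formula of another with the same $\llbracket\cdot\rrbracket$, by taking the normal form of the first and re-reading it in the target language. The only care needed is that $\llbracket\cdot\rrbracket$ is computed relative to a domain $V$ containing all variables of both formulas, which is exactly what locality licenses.

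Concretely, I would structure the write-up as: (1) prove part (ii) by the induction sketched above, treating the atomic cases $p,\neg p,\bot,\top,\dep(\vv p,q)$ directly and the connective cases $\wedge,\sor,\vee,\to$ by combining the normal forms of subformulas — the $\to$ case being the only laborious one, where one shows $\big(\bigvee_i\Theta_{X_i}\big)\to\big(\bigvee_j\Theta_{Y_j}\big)\bigsqcup\{\Theta_Z: Z\subseteq\{0,1\}^V,\ Z\models(\cdots)\}$ by brute semantic evaluation and appeal to Lemma~\ref{theta_prop}; (2) derive expressive completeness from the normal form plus the observation that every nonempty downwards closed $\mathcal{K}$ equals $\bigcup\{\,\llbracket\Theta_X\rrbracket: X\in\mathcal{K}_{\max}\,\}$; (3) conclude equal expressive power. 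The expected hard part is the internal bookkeeping in the $\to$ case of the induction for part (ii), and — for $\PDo$ specifically — verifying that the tensor-only disjunctive normal form is still universal, for which I would lean on the results already cited from~\cite{VY_PD} rather than reprove them here.
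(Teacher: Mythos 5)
Your core argument is essentially the paper's, with the order of the two parts reversed: the paper first proves (i) by observing that $\bigvee_{X\in\mathcal{K}}\Theta_X$ defines any nonempty downwards closed $\mathcal{K}$ (via \Cref{theta_prop}), and then gets (ii) for free because $\llbracket\phi\rrbracket$ is nonempty, finite and downwards closed; you prove (ii) directly by taking $\{X_i\mid i\in I\}=\llbracket\phi\rrbracket$ and unwinding $\bigsqcup$ with \Cref{theta_prop}, locality and downwards closure, and then read off (i). These are the same proof, and your direct semantic argument already establishes (ii) for all three logics in one stroke; the ``simultaneous induction on the structure of $\phi$'' you announce (including the laborious $\to$ case) is therefore superfluous and should be dropped, since nothing in it is needed once the $\llbracket\phi\rrbracket$ argument is in place. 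Your observation that the meta-disjunction $\bigsqcup$ collapses to $\bigvee$ only for \PTo and \Inql is also exactly right.

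The one place where your sketch goes wrong is the attempted direct argument for the \PDo half of (i). Tensor does not ``distribute over the downward closure of the maximal elements'': since $Y\models\Theta_{X_1}\sor\Theta_{X_2}$ iff $Y\subseteq X_1\cup X_2$, one has $\bigsor_{i}\Theta_{X_i}\equiv\Theta_{\bigcup_i X_i}$, so tensor disjunctions of the formulas \eqref{Theta_X_df1} define only full power sets of teams, never an arbitrary downwards closed family (e.g.\ the downward closure of two distinct singletons is not of this form). Likewise the parenthetical identity expressing $\dep(\vv p,q)$ as a tensor of conjunctions of literals cannot be right as stated, because classical formulas are flat while $\dep(\vv p,q)$ is not; the correct \PDo normal form keeps dependence atoms (or, as in Equation \eqref{gdep_nf}, an outer disjunction over functions $f$) inside the tensor. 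So the \PDo case of expressive completeness genuinely requires the different argument of \cite{VY_PD}; since both you and the paper ultimately fall back on citing that result, this is not a fatal gap, but the sketched ``$\sor$-distribution'' route would fail if carried out, and the citation is doing real work there rather than being an optional shortcut.
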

\begin{proof}
We only give a proof sketch.  For (i), let $\mathcal{K}$  be a nonempty downwards closed collection of teams on $V$. The formula $\bigvee_{X\in \mathcal{K}}\Theta_X$ in \lang{\PTo} or \lang{\Inql} satisfies $\mathcal{K}=\llbracket\bigvee_{X\in \mathcal{K}}\Theta_X\rrbracket$ by \Cref{theta_prop}. The proof for the logic \PDo follows from a different argument; we refer the reader to \cite{VY_PD} for details.	

For every formula $\phi$, the set $\llbracket \phi\rrbracket$ is nonempty and downwards closed. Thus the item (ii) follows from the proof of item (i).
\end{proof}

\subsection{Intermediate logics}

There is a close relationship between logics of dependence and  intermediate theories (i.e., theories between intuitionistic and classical logic), as first formulated in \cite{InquiLog}. Here we describe this connection, and in the sections on projectivity and admissibility we will treat dependence logics and intermediate theories side by side.  

An \emph{intermediate theory} is a set \LL of formulas closed under modus ponens 
such that $\IPC\subseteq \LL\subseteq \CPC$. An \emph{intermediate logic} is an intermediate theory closed under uniform substitution. The intermediate logics that are most relevant in this paper are Maksimova's logic \ND, Kreisel-Putnam logic \KP and Medvedev's logic \ML (``the logic of finite problems''). It is well-known that $\ND\subseteq \KP\subseteq \ML$, and  \ML is the maximal intermediate logic extending \ND that has the disjunction property. 

We call a substitution $\sigma$ \emph{stable} in a logic \LL that has implication and negation in its language if $\sigma(p)$ is \emph{stable} in \LL, i.e., $\vdash_\LL\sigma(p)\leftrightarrow \neg\neg\sigma(p)$, for all $p\in\rm{Prop}$.
It is easy to verify that the substitution $(\cdot)^\neg$, defined as 
\(p^\neg=\neg p\text{ for all }p\in \rm{Prop},\)
is a stable substitution in all intermediate logics. For any intermediate logic \LL, define its \emph{negative variant} \LLn  as
\[\LLn=\{\phi\mid\phi^\neg\in\LL\}.\]

\begin{lem}[\cite{InquiLog}]\label{intermediate_neg}
Let \LL be an intermediate logic. 
\begin{description}
\item[(i)] \LLn is the smallest intermediate theory that contains \LL and 
$\neg\neg p\to p$ for every $p\in\rm{Prop}$. 
\item[(ii)] The consequence relation $\vdash_{\LLn}$ of \LLn is closed under stable substitutions. 
\item[(iii)] If \LL has the disjunction property, then so does \LLn.
\end{description}
\end{lem}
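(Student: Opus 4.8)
\textbf{Proof plan for Lemma~\ref{intermediate_neg}.}

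The plan is to prove the three items in order, exploiting the fact that the translation $(\cdot)^\neg$ is a self-inverse-like device on formulas: applying $\neg$ in front of each variable cannot be undone, but a double negation in front of each variable \emph{can} be collapsed modulo $\neg\neg p\to p$, and this is exactly the content we need. For item~(i), I would first check that $\LLn$ is an intermediate theory. It is closed under modus ponens because if $\phi^\neg\in\LL$ and $(\phi\to\psi)^\neg=\phi^\neg\to\psi^\neg\in\LL$, then $\psi^\neg\in\LL$ by closure of \LL under \textsf{MP}, so $\psi\in\LLn$. It contains \IPC since \IPC is structural and $\phi^\neg$ is a substitution instance of $\phi\in\IPC$, hence $\phi^\neg\in\IPC\subseteq\LL$. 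It is contained in \CPC since in \CPC we have $\vdash_\CPC\phi\leftrightarrow\phi^\neg$ (each $\neg\neg p$ is classically equivalent to $p$), so $\phi^\neg\in\CPC$ forces $\phi\in\CPC$; and $\phi^\neg\in\LL\subseteq\CPC$. Next, $\neg\neg p\to p\in\LLn$ because $(\neg\neg p\to p)^\neg=\neg\neg\neg p\to\neg p$, which is a theorem of \IPC, hence of \LL. Finally, for minimality, let \textsf{T} be any intermediate theory with $\LL\subseteq\textsf{T}$ and $\neg\neg p\to p\in\textsf{T}$ for all $p$; I must show $\LLn\subseteq\textsf{T}$. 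Given $\phi\in\LLn$, i.e.\ $\phi^\neg\in\LL\subseteq\textsf{T}$, apply in \textsf{T} the (admissible-in-any-such-\textsf{T}) substitution replacing each $\neg p$ back by $p$: more carefully, since $\textsf{T}\vdash\neg\neg p\leftrightarrow p$ for each variable $p$, a straightforward induction on formula structure gives $\textsf{T}\vdash\chi\leftrightarrow\chi'$ where $\chi'$ is obtained from $\chi$ by replacing some occurrences of subformulas $\neg\neg p$ by $p$ (replacement of provably equivalent subformulas is legitimate in any intermediate theory, as \IPC proves the congruence schemas). Applying this with $\chi=\phi^\neg$, whose outermost structure is $\phi$ with each atom $p$ replaced by $\neg p$, and noting that $\phi$ itself is obtained from $\phi^{\neg}$ by... — here one must be slightly careful, since $\phi^\neg$ need not literally contain the pattern $\neg\neg p$. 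The clean route is instead: show by induction on $\phi$ that $\textsf{T}\vdash\phi\leftrightarrow(\neg\neg\phi\text{-ish})$; the honest statement is that for the specific substitution $\sigma$ sending $p\mapsto p$ and the identity it already works, so really one shows $\LL^\neg=\{\phi : \phi^\neg\in\LL\}$ coincides with the closure; the textbook fact is that $\phi\in\LLn$ iff $\IPC+\{\neg\neg p\to p\}+\LL\vdash\phi$, proved by observing $\vdash_{\IPC+\{\neg\neg p\to p\}}\phi\leftrightarrow\phi^\neg$ by induction and that $\LL^\neg$ is closed under \textsf{MP} and contains $\LL$ and $\neg\neg p\to p$, while conversely any theorem of that theory has its $(\cdot)^\neg$-image a theorem of $\LL$ because $(\neg\neg p\to p)^\neg=\neg\neg\neg p\to\neg p\in\IPC$.

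For item~(ii), let $\sigma$ be a stable substitution in \LLn, so $\vdash_{\LLn}\sigma(p)\leftrightarrow\neg\neg\sigma(p)$ for every $p$, and suppose $\phi\vdash_{\LLn}\psi$, i.e.\ (by the deduction theorem for intermediate theories) $\phi\to\psi\in\LLn$, i.e.\ $(\phi\to\psi)^\neg=\phi^\neg\to\psi^\neg\in\LL$. I need $\sigma(\phi)\to\sigma(\psi)\in\LLn$, i.e.\ $(\sigma(\phi))^\neg\to(\sigma(\psi))^\neg\in\LL$. The key computation: for any formula $\chi$, the formula $(\sigma(\chi))^\neg$ equals $\tau(\chi^\neg)$ where $\tau$ is the substitution defined by $\tau(p)=(\sigma(p))^\neg$ — because $(\cdot)^\neg$ and $\sigma$ both commute with connectives and only disagree at atoms, where the composite gives exactly $\tau$ applied after the inner $(\cdot)^\neg$. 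Hmm, this needs the observation that $\chi^\neg$ has $\neg p$ where $\chi$ has $p$, so $\tau(\chi^\neg)$ has $\neg\tau(p)=\neg(\sigma(p))^\neg$ there, whereas $(\sigma(\chi))^\neg$ has $(\sigma(p))^\neg$ there; these differ by a $\neg$. So instead define $\tau$ by $\tau(p)=$ (the variable-free gadget) — cleaner: since \LL is \emph{structural}, from $\phi^\neg\to\psi^\neg\in\LL$ we get $\rho(\phi^\neg\to\psi^\neg)\in\LL$ for \emph{every} substitution $\rho$; choose $\rho(p)=\neg\sigma(p)$ — no. The right choice: one checks that $(\sigma(\chi))^\neg$ is, up to \LL-provable equivalence, equal to $\rho(\chi^\neg)$ where $\rho(p)=\neg\neg(\sigma(p))^\neg$, using stability $\vdash_\LL(\sigma(p))^\neg\leftrightarrow\neg\neg(\sigma(p))^\neg$ (which is $\vdash_{\LLn}\sigma(p)\leftrightarrow\neg\neg\sigma(p)$ transported through $(\cdot)^\neg$, valid since $(\sigma(p)\leftrightarrow\neg\neg\sigma(p))^\neg$ is in \LL). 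Then apply structurality of \LL to $\phi^\neg\to\psi^\neg$ with $\rho$, get $\rho(\phi^\neg)\to\rho(\psi^\neg)\in\LL$, and rewrite via the equivalences to conclude $(\sigma(\phi))^\neg\to(\sigma(\psi))^\neg\in\LL$, i.e.\ $\sigma(\phi)\to\sigma(\psi)\in\LLn$, i.e.\ $\sigma(\phi)\vdash_{\LLn}\sigma(\psi)$. This bookkeeping — matching up $(\sigma(\cdot))^\neg$ with an honest substitution applied to $(\cdot)^\neg$ modulo \LL-equivalence — is the one genuinely delicate point, and it is where stability of $\sigma$ is used essentially.

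For item~(iii), suppose \LL has the disjunction property and $\phi\vee\psi\in\LLn$, i.e.\ $(\phi\vee\psi)^\neg=\phi^\neg\vee\psi^\neg\in\LL$; by the disjunction property of \LL, $\phi^\neg\in\LL$ or $\psi^\neg\in\LL$, i.e.\ $\phi\in\LLn$ or $\psi\in\LLn$. Hence the main obstacle is nothing deep in (i) or (iii) — these are short syntactic verifications — but rather the substitution-tracking in (ii): the step that $(\sigma(\chi))^\neg$ can be replaced, modulo \LL, by a true substitution instance of $\chi^\neg$. I would isolate that as a preliminary claim (proved by induction on $\chi$, the atomic case being exactly the stability hypothesis transported through $(\cdot)^\neg$, and the inductive steps being the congruence of $\leftrightarrow$ over $\wedge,\vee,\to$ in \IPC), and then items (i)--(iii) fall out cleanly as above.
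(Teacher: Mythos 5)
The paper does not prove this lemma itself --- it is imported from \cite{InquiLog} --- so your proposal has to be judged against the standard argument, which is indeed the route you take: item (iii) and the verification that \LLn is an intermediate theory containing \LL and $\neg\neg p\to p$ are fine. But the two pivotal steps are wrong as written. In the minimality half of (i), the claim $\vdash_{\IPC+\{\neg\neg p\to p\}}\phi\leftrightarrow\phi^\neg$ is false: for $\phi=p$ it asserts $p\leftrightarrow\neg p$. The correct equivalence is $\vdash_{\IPC+\{\neg\neg p\to p\}}\phi\leftrightarrow(\phi^\neg)^\neg$, where $(\phi^\neg)^\neg$ is $\phi$ with every atom doubly negated; and to use it you still need the structurality of \LL (which your ``clean route'' never invokes): from $\phi^\neg\in\LL$ the substitution $p\mapsto\neg p$ gives $(\phi^\neg)^\neg\in\LL\subseteq\mathsf{T}$, and then $\phi\in\mathsf{T}$ follows by replacement of $\neg\neg p$ by $p$ inside $\mathsf{T}$. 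Your earlier worry that $\phi^\neg$ need not contain the pattern $\neg\neg p$ was exactly the right objection, but the paragraph you substitute for it does not repair the argument --- it just restates the false equivalence.

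In (ii) your substitution is off by one negation. With $\rho(p)=\neg\neg(\sigma(p))^\neg$, the formula $\rho(\chi^\neg)$ carries $\neg\rho(p)=\neg\neg\neg(\sigma(p))^\neg$, which is \IPC-equivalent to $\neg(\sigma(p))^\neg$, at the positions where $(\sigma(\chi))^\neg$ carries $(\sigma(p))^\neg$; these are each other's negations, not \LL-equivalent, so the claimed equivalence $\rho(\chi^\neg)\dashv\vdash_\LL(\sigma(\chi))^\neg$ already fails at atoms. The correct choice is $\rho(p)=\neg(\sigma(p))^\neg$: then $\neg\rho(p)=\neg\neg(\sigma(p))^\neg$, which is \LL-equivalent to $(\sigma(p))^\neg$ precisely by the stability of $\sigma$ transported through $(\cdot)^\neg$ (your observation that $(\sigma(p)\leftrightarrow\neg\neg\sigma(p))^\neg\in\LL$ is correct). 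With that repair, your induction via the \IPC congruence schemas and the application of structurality of \LL to $\phi^\neg\to\psi^\neg$ go through as you describe, and the whole proof becomes the standard one. So the architecture is right, but both (i) and (ii) need these concrete fixes before the proof is correct.
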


\begin{lem}
 Let \LL be an intermediate logic such that $\ND\subseteq\LL$. Every formula is provably equivalent to a formula of the form $\bigvee_{i\in I}\neg\phi_i$ in \LLn.
\end{lem}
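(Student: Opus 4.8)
The plan is to reduce the claim to the normal form theorem for \Inql and the definition of the negative variant. First I would observe that for any intermediate logic \LL with $\ND\subseteq\LL$, we have $\LLn\subseteq(\ND^\neg)$-extensions are all intermediate theories containing $\ND^\neg$; and by \Cref{intermediate_neg}(i), $\LLn$ contains $\neg\neg p\to p$ for every $p$. The key point is that $\Inql$-provability coincides with $\ND^\neg$-provability (the set of theorems of \Inql equals the negative variant of \ND, as discussed in the introduction and following \Cref{pid_completeness}), and more generally $\vdash_{\LLn}$ extends $\vdash_{\ND^\neg}$.

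The main step is to massage an arbitrary formula $\phi$ into the desired shape. Given $\phi$ in \lang{\Inql} $=$ \lang{\LLn}, I would like to apply the normal form from \Cref{NF_lm}(ii): in the context of \Inql (equivalently, over $\ND^\neg$), every formula $\phi(p_1,\dots,p_n)$ satisfies $\phi\equiv\bigvee_{i\in I}\Theta_{X_i}$ where $\Theta_{X_i}=\neg\neg\bigbor_{v\in X_i}(p_1^{v(p_1)}\wedge\dots\wedge p_n^{v(p_n)})$. Each disjunct $\Theta_{X_i}$ is already of the form $\neg\psi_i'$ with $\psi_i'=\neg\bigbor_{v\in X_i}(\cdots)$, and the empty-team case $\Theta_\emptyset=\bot=\neg\top$ also fits. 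Hence $\phi\equiv\bigvee_{i\in I}\neg\psi_i'$ semantically, i.e., $\phi\equiv\bigvee_{i\in I}\neg\psi_i'$ in \Inql. To transfer this from semantic equivalence in \Inql to provable equivalence in \LLn, I would invoke that $\Inql$ is complete with respect to its deductive system (\Cref{pid_completeness}), so $\vdash_\Inql\phi\leftrightarrow\bigvee_{i\in I}\neg\psi_i'$; since all axioms of \Inql (theorems of \IPC, the $\neg\neg p\to p$ scheme, and the $\mathrm{ND}_k$ schemes) are in $\ND^\neg\subseteq\LLn$ by \Cref{intermediate_neg}(i) — here one uses $\ND\subseteq\LL$ so that $\mathrm{ND}_k\in\LL$, hence its negative-variant consequences are in $\LLn$ — and \LLn is closed under modus ponens, the equivalence $\phi\leftrightarrow\bigvee_{i\in I}\neg\psi_i'$ is provable in \LLn as well.

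Putting this together: for every $\phi$ there is a finite index set $I$ and formulas $\phi_i:=\psi_i'$ (if $I=\emptyset$, read the empty disjunction as $\bot=\neg\top$, so take $I=\{0\}$, $\phi_0=\top$) such that $\vdash_\LLn\phi\leftrightarrow\bigvee_{i\in I}\neg\phi_i$, which is exactly the assertion of the lemma. I would spell out the $I=\emptyset$ corner case explicitly to avoid an empty disjunction.

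The step I expect to be the main obstacle is the transfer from semantic equivalence to provable equivalence in the possibly larger theory \LLn rather than in \Inql itself: one must be careful that \Cref{NF_lm}(ii) as stated gives only $\phi\bigsqcup_{i\in I}\Theta_{X_i}$ (a semantic statement about \lang{\PTo}/\lang{\Inql}) and, for \Inql, the genuine provable equivalence $\phi\equiv\bigvee_{i\in I}\Theta_{X_i}$; lifting this to $\vdash_\LLn$ requires noting that $\Inql$-provability (equivalently $\ND^\neg$) is contained in every $\LLn$ with $\ND\subseteq\LL$. This containment is where $\ND\subseteq\LL$ is actually used, via \Cref{intermediate_neg}(i) applied to $\ND$ and monotonicity of the negative-variant operation, together with the identification of the theorems of \Inql with $\ND^\neg$ recorded in the introduction. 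Everything else is routine bookkeeping about the syntactic shape of $\Theta_X$.
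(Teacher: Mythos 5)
Your argument is correct and follows essentially the same route as the paper's (sketched) proof: both rest on the disjunctive normal form of \Cref{NF_lm}(ii), whose disjuncts $\Theta_{X_i}$ are double negations and hence already of the form $\neg\phi_i$, together with the fact that the \ND axioms and $\neg\neg p\to p$ are available in \LLn (equivalently, $\Inql=\ND^\neg\subseteq\LLn$ by \Cref{inql=kpn}(i) and monotonicity of the negative-variant operation). Your explicit handling of the transfer from \Inql-provability to \LLn-provability and of the empty-disjunction case just spells out what the paper leaves implicit.
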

\begin{proof}
The lemma follows essentially from \cite{InquiLog}.
Each formula $\neg\phi_i$ is a $\Theta_X$ formula as defined in (\ref{Theta_X_df2}) for some set $X$ of valuations, and the proof makes essential use of the axioms of \ND and \Cref{intermediate_neg}(i).
\end{proof}

It was shown in \cite{InquiLog}  that the negative variants of all of the intermediate logics between \ND and \ML (including \KP) are identical. 
Propositional inquisitive logic \Inql is the negative variant of such logics. We state this and other properties of \Inql in the following theorem.

\begin{thm}[\cite{InquiLog}]\label{ndneg_NF}\label{inql=kpn}
\begin{description}
\item[(i)] For any intermediate logic \LL such that $\ND\subseteq \LL\subseteq \ML$, we have that $\Inql=\LL^\neg$.
\item[(ii)] \Inql has the disjunction property and its consequence relation $\vdash_\Inql$ is closed under stable substitutions.
\end{description}
\end{thm}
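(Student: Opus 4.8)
The plan is to derive both parts from one identification, $\Inql=\ND^\neg$, together with the single extra inclusion $\ML^\neg\subseteq\Inql$. Since $\LL\mapsto\LL^\neg$ is monotone (immediate from $\LL^\neg=\{\phi\mid\phi^\neg\in\LL\}$), these two facts squeeze every negative variant of an intermediate logic between $\ND$ and $\ML$ down to $\Inql$: for $\ND\subseteq\LL\subseteq\ML$ one gets $\Inql=\ND^\neg\subseteq\LL^\neg\subseteq\ML^\neg\subseteq\Inql$, hence $\LL^\neg=\Inql$, which is (i). Part (ii) will then follow from $\Inql=\ND^\neg$ and results already in the text.

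First I would prove $\Inql=\ND^\neg$. By \Cref{pid_completeness}, the theorems of $\Inql$ are exactly the closure under modus ponens of $\IPC$ together with all substitution instances of the schemata $\mathrm{ND}_k$ and all formulas $\neg\neg p\to p$ with $p\in\mathrm{Prop}$. Now the modus-ponens closure of $\IPC$ together with all substitution instances of the $\mathrm{ND}_k$ is automatically closed under substitution as well (apply a given substitution to an entire derivation), so it is an intermediate logic, and it is Maksimova's logic $\ND$ (axiomatised over $\IPC$ by the $\mathrm{ND}_k$). Hence the theorems of $\Inql$ form the modus-ponens closure of $\ND\cup\{\neg\neg p\to p\mid p\in\mathrm{Prop}\}$, which is the smallest intermediate theory extending $\ND$ by all the axioms $\neg\neg p\to p$, and this is $\ND^\neg$ by \Cref{intermediate_neg}(i).

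Next I would prove $\ML^\neg\subseteq\Inql$. Let $\psi\in\ML^\neg$ have its propositional variables among $V=\{p_1,\dots,p_n\}$. By the normal form \Cref{NF_lm}(ii) in the form \eqref{Theta_X_df2}, $\psi\equiv\bigvee_{i\in I}\Theta_{X_i}$ with teams $X_i$ on $V$ and $\Theta_{X_i}=\neg\neg\bigvee_{v\in X_i}(p_1^{v(p_1)}\wedge\dots\wedge p_n^{v(p_n)})$; since $\Inql\subseteq\ML^\neg$ this equivalence is also provable in $\ML^\neg$, so $\bigvee_{i\in I}\Theta_{X_i}\in\ML^\neg$ and it suffices to show this disjunction lies in $\Inql$. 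Applying $(\cdot)^\neg$ gives $\bigvee_{i\in I}(\Theta_{X_i})^\neg\in\ML$, and because $\ML$ has the disjunction property there is $j\in I$ with $(\Theta_{X_j})^\neg\in\ML\subseteq\CPC$. Now $(\Theta_{X_j})^\neg$ is classically valid; but a direct classical computation — the outer double negation is classically redundant, and replacing each $p_i$ by $\neg p_i$ in a minterm $p_1^{v(p_1)}\wedge\dots\wedge p_n^{v(p_n)}$ simply complements $v$ — shows that $(\Theta_{X_j})^\neg$ is a classical tautology only if the complement of $X_j$ exhausts $\{0,1\}^V$, i.e.\ $X_j=\{0,1\}^V$. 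For that $X_j$, \Cref{theta_prop} gives $Y\models\Theta_{X_j}$ for every team $Y$ on $V$, so $\Theta_{X_j}$ is valid by locality, hence $\vdash_\Inql\Theta_{X_j}$ and therefore $\vdash_\Inql\bigvee_{i\in I}\Theta_{X_i}$, i.e.\ $\psi\in\Inql$.

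For (ii), the disjunction property of $\Inql$ is immediate from \Cref{DP}: for arbitrary $\Inql$-formulas $\phi,\psi$ the semantic clause for $\vee$ gives $(\phi\vee\psi)\bigsqcup_{i\in\{1,2\}}\chi_i$ with $\chi_1=\phi$ and $\chi_2=\psi$, so $\models\phi\vee\psi$ forces $\models\phi$ or $\models\psi$; and closure of $\vdash_\Inql$ under stable substitutions is then $\Inql=\ND^\neg$ combined with \Cref{intermediate_neg}(ii). The step I expect to be the main obstacle is $\ML^\neg\subseteq\Inql$, and within it the passage from ``$(\Theta_{X_j})^\neg$ is a theorem of $\ML$'' to ``$\Theta_{X_j}$ is valid'': one must first use the disjunction property of $\ML$ to isolate a single disjunct, and then observe that, $(\Theta_{X_j})^\neg$ being classically valid (as $\ML\subseteq\CPC$), the collapse of the intuitionistic structure inside the double negation together with the complementation bijection on valuations forces $X_j=\{0,1\}^V$, which is exactly the case in which $\Theta_{X_j}$ is valid.
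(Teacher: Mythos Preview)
The paper does not give its own proof of this theorem: it is stated with the citation \cite{InquiLog} and no proof follows in the text. So there is no in-paper argument to compare against.

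Your proposal is correct. The identification $\Inql=\ND^\neg$ via \Cref{pid_completeness} and \Cref{intermediate_neg}(i), monotonicity of $(\cdot)^\neg$, and the inclusion $\ML^\neg\subseteq\Inql$ do squeeze every $\LL^\neg$ with $\ND\subseteq\LL\subseteq\ML$ down to $\Inql$. In the step $\ML^\neg\subseteq\Inql$, your use of the normal form is legitimate because the equivalence $\psi\leftrightarrow\bigvee_i\Theta_{X_i}$ is a theorem of $\Inql=\ND^\neg\subseteq\ML^\neg$ (via the deduction theorem in \Cref{basic_prop_pt}), so modus ponens in $\ML^\neg$ yields $\bigvee_i\Theta_{X_i}\in\ML^\neg$ and hence $\bigvee_i(\Theta_{X_i})^\neg\in\ML$; the disjunction property of $\ML$ then isolates a single $(\Theta_{X_j})^\neg\in\ML\subseteq\CPC$, and your classical computation forces $X_j=\{0,1\}^V$. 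One wording quibble: ``the complement of $X_j$'' is ambiguous; you mean the image $\{\bar v:v\in X_j\}$ under bitwise complementation of valuations, not the set-theoretic complement $\{0,1\}^V\setminus X_j$. This is clear from your conclusion ``i.e.\ $X_j=\{0,1\}^V$'', but it would be worth stating explicitly. Part (ii) follows exactly as you say from \Cref{DP} and \Cref{intermediate_neg}(ii).
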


There are many intermediate logics, including \ND and \KP, for which not much is known about their 
admissible rules. In Theorem~\ref{strcpl_neg_int} we show that the negative fragment of 
intermediate logics between \ND and \ML is structurally complete with respect to stable substitutions. 
Although we cannot immediately draw conclusions from this about the admissibility in the original logics, we hope that our results can be of help in the understanding of admissibility in these logics some day.  

\section{Extensions of the logics and substitutions}

\subsection{Extensions of the logics}
For intermediate logics and \Inql, all possible substitutions are well-defined, meaning that given a formula and a substitution in the language of the logic, applying that substitution to the formula results in a formula in that language. However, for the other logics of dependence that we consider in this paper (i.e., \PDo and \PTo),  substitution is \emph{not} well-defined.  A counter example is the formulas $\dep(p_1,\dots,p_n,q)$ and $\neg p$, for which the substitution instances $\dep(\sig p_1,\dots,\sig p_n,\sig q)$ and $\neg\sigma(p)$ only belongs to \lang{\PDo} or/and \lang{\PTo} if $\sig$ maps every propositional variable to a propositional variable. 

For the study of admissibility one has to isolate the (or a meaningful) set of \emph{well-defined} substitutions under which a consequence relation of a logic is \emph{closed}. For this purpose, in this section we expand the languages of the logics \PDo and \PTo so as to force flat substitutions to be well-defined, and we will show in the next section that these extensions are closed under flat substitutions.

\begin{dfn}\label{syntax_pdb}
The following grammars define well-formed formulas of the extended logics of dependence.
\begin{itemize}
\item 
The \emph{extended propositional downwards closed team logic} (\PT): 
\[
    \phi::= \,p\mid \bot \mid \top \mid\dep(\vv{\phi},\phi)\mid \neg\phi\mid \phi\wedge\phi\mid\phi\sor\phi\mid \phi\vee\phi\mid \phi\to\phi.
\] 
\item
The \emph{extended propositional dependence logic} (\PD): 
\[
    \phi::= \,p\mid \bot \mid \top \mid\dep(\vv{\alpha},\beta)\mid \neg\phi\mid\phi\wedge\phi\mid \phi\otimes\phi,
\] 
where $\vv{\alpha},\beta$ are flat formulas.
\end{itemize}
\end{dfn}

The extended logics have arbitrary negation as well-formed formulas. In the sequel we will give a semantics for the negation that is well conservative over the restricted negation in the original logics but not found in the literature. The extension \PT has dependence atoms with arbitrary arguments, while in the extension \PD we only allow dependence atoms with flat arguments. The restriction for \PD is made for technical simplicity that we discuss in the sequel, but as we consider flat substitutions only, this limitation does not affect the generality of the results in this paper. Generalized dependence atoms with flat arguments are also studied in the context of modal dependence logic, see \cite{EHMMVV2013}\cite{HLSV14}.



\vspace{8pt}

Below we define the  semantics of the new formulas. We first treat \PT and then \PD.

\begin{dfn}\label{extend_pt_df}
Let $\phi_1,\dots,\phi_n,\psi$ be arbitrary formulas of \PT. Define
\begin{description}
\item[(a)] $X\models \dep(\phi_1,\dots,\phi_n,\psi)$ iff $X\models\bigwedge_{i=1}^n(\phi_i\vee(\phi_i\to \bot))\to(\psi\vee(\psi\to\bot))$;\footnote{The authors would like to thank Ivano Ciardelli for suggesting this definition, see also \cite{Ciardelli2015}.}
\item[(b)] $X\models \neg\phi$ iff $X\models\phi\to\bot$ iff $\{v\}\not\models\phi$ for all $v\in X$.
\end{description}
\end{dfn}


In order for these definitions to be well-defined they have to agree with previously defined notions. For the dependence atom the observation in \eqref{dep_df} suffices. For negation, it suffices for 
\PT that $\neg\phi$ has been defined as a shorthand for $\phi\to\bot$, thus the semantics for negation as given in item (b) coincides with that in this logic. 

We turn to \PD. To define the semantics of the new formulas we need the following equivalence relation between valuations. 
Given a sequence $\vv{\phi}=\phi_1\dots\phi_n$ of formulas,  define an equivalence relation $\sim_{\vv{\phi}}$ on teams as follows:
\[
 u \eqv{\vv{\phi}} v\quad \text{ iff } \quad\A 1\leq i\leq n \, (\{u\}\models \phi_i \Ifff \{v\} \models \phi_i).
 \]

\begin{dfn}\label{extend_pd_df}
Define
\begin{description}
\item[(a)] for flat formulas $\alpha_1,\dots,\alpha_k,\beta$ of \PD, 
\begin{equation}\label{flat_dep}
X\models\dep(\vv{\alpha},\beta)\equiv_{\textsf{df}}\forall v,v'\in X(v\sim_{\vv{\alpha}}v'\Longrightarrow v\sim_\beta v');
\end{equation}
\item[(b)]
full negation in \PD  as $X\models \neg\phi$ iff $\{v\}\not\models\phi$ for all $v\in X$.
\end{description}
\end{dfn}

We have to show that the notions defined in Definition~\ref{extend_pd_df} 
are extensions of the corresponding notions for \PDo, and also special case of those of \PT. 
Obviously for the formula $\dep(\vv{p},q)$, the semantics given in item (a) coincides with the semantics given in \Cref{TS_PD}, and we leave it to the reader to check that it also coincides with \Cref{extend_pt_df}(a). 

The negation defined in item (b) deserves more comments. It is straightforward from the definition that $\neg\phi$ is always flat, and such defined negation coincides with that of \PT. In the literature of first-order dependence logic, negation is usually treated only \emph{syntactically}, in the sense that a negated formula $\neg\phi$ is defined to have the same semantics as the unique formula $\phi^{\sim}$ in \emph{negation normal form} obtained  by exhaustively applying the De Morgan's laws and some other syntactic rewrite rules. The corresponding syntactic rewrite rules for propositional dependence logic are as follows:
\begin{equation}\label{neg_df}
\begin{array}{rclcrclcrcl}
 p^\sim&\mapsto& \neg p &~~~~~~~~& \top^\sim&\mapsto&\bot&~~~~~~~~& (\phi\wedge\psi)^\sim&\mapsto&\phi^\sim\sor\psi^\sim\\
(\neg p)^\sim&\mapsto& p  &&\bot^\sim&\mapsto&\top& &(\phi\sor\psi)^\sim&\mapsto&\phi^\sim\wedge\psi^\sim \\
\dep(\vv{\phi},\psi)^\sim &\mapsto&\bot 
 \end{array}
\end{equation}
It is easy to see that the syntactic rewrite procedure for a negated formula $\neg\phi$ of \PD defined as above always terminates on a unique dependence atom-free formula $\phi^{\sim}$ in negation normal form in \lang{\PDo}. 

When applying the syntactic negation, special attention needs to be paid to double negations of dependence atoms, i.e., formulas of the form $\neg\neg\dep(\vv{a},b)$, where the variables $\vv{a},b$ are  first-order or propositional. Following Hintikka's game-theoretic perspective of logic (see, e.g., \cite{Hintikka98book}), the negation in logics of dependence is usually treated as a connective upon reading which the two players in the corresponding semantic game swap their roles. This way $\neg\neg\dep(\vv{a},b)$ should have the same meaning as $\dep(\vv{a},b)$, however, this reading is not consistent with the syntactic rewrite rules as in (\ref{neg_df}) (see e.g., \cite{Ville_thesis} for further discussions). To avoid ambiguity, most literature of logics of dependence does not allow double negation to occur in front of dependence atoms. In this paper, in the extended logic \PD we do include double negated dependence atoms as well-formed formulas, but as we do not take the game-theoretic approach to propositional logics of dependence, the semantics of double negated dependence atoms is computed simply according to \Cref{extend_pd_df}(b), namely, $\neg\neg\dep(\vv{p},q)$ is always semantically equivalent to $\top$ (noting that $\dep(\vv{p},q)$ is always true on singleton teams). Given such interpretation of the double negated dependence atoms, the negation defined in \Cref{extend_pd_df}(b) coincides with the syntactic negation given by the rewrite rules in (\ref{neg_df}), as we will show in the next lemma. 
However, on the other hand, in the context of first-order dependence logic, regardless how double negated dependence atoms are treated, the negation defined as in \Cref{extend_pd_df}(b)  does \emph{not} coincide with the syntactic negation given by the rewrite rules (rather, it corresponds to the defined connective $\sim\downarrow$ in Hodges \cite{Hodges1997a,Hodges1997b}). For instance, the reader who is familiar with the semantics of first-order dependence logic can easily verify that $M\not\models_{\{s\}}\forall x\dep(x)$ holds for all assignments $s$ on all models $M$, assuming that the domain of a model has at least two elements. Thus by \Cref{extend_pd_df}(b) $M\models_X\neg\forall x\dep(x)$ for all teams $X$ on all models $M$, namely $\neg\forall x\dep(x)\equiv \top$. However, by the syntactic rewrite rules, $(\forall x\dep(x))^\sim=\exists x  (\dep(x))^\sim=\exists x\bot$.

\begin{lem}\label{neg_syn_sem}
For any formula $\phi$ in \lang{\PD}, we have that $\neg\phi\equiv\phi^\sim$.
\end{lem}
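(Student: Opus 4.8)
The plan is to prove $\neg\phi \equiv \phi^\sim$ by induction on the structure of formulas $\phi$ in \lang{\PD}. First I would record the key semantic fact underlying everything: by \Cref{extend_pd_df}(b), $X \models \neg\phi$ iff $\{v\}\not\models\phi$ for every $v\in X$, and since $\phi^\sim$ is dependence-atom-free and in negation normal form it is a classical formula, hence flat, so $X\models\phi^\sim$ iff $\{v\}\models\phi^\sim$ for all $v\in X$. Therefore it suffices to show that for every single valuation $v$ we have $\{v\}\not\models\phi \iff \{v\}\models\phi^\sim$; the general team statement then follows immediately. So the whole lemma reduces to a pointwise (singleton-team) claim, which is exactly where the flatness of classical formulas does the work.

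Next I would run the induction on $\phi$ over the grammar of \PD. The atomic cases are direct from \Cref{TS_PD}: for $\phi = p$, $\{v\}\not\models p$ iff $v(p)=0$ iff $\{v\}\models\neg p = p^\sim$; for $\phi=\neg p$, $\{v\}\not\models\neg p$ iff $v(p)=1$ iff $\{v\}\models p = (\neg p)^\sim$; for $\phi=\top$, $\{v\}\not\models\top$ never holds and $\{v\}\models\bot=\top^\sim$ never holds; dually for $\bot$. For the dependence atom $\phi=\dep(\vv{\alpha},\beta)$ the clause $\dep(\vv{\alpha},\beta)^\sim\mapsto\bot$ is handled by noting that $\{v\}\models\dep(\vv{\alpha},\beta)$ always holds (on a singleton team the defining condition of \Cref{extend_pd_df}(a) is vacuous, as there is only one valuation), so $\{v\}\not\models\dep(\vv{\alpha},\beta)$ never holds, matching $\{v\}\models\bot$ never holding. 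For the connective cases I would use the induction hypothesis together with the singleton-team semantics of $\wedge$ and $\sor$: on a singleton team, $\{v\}\models\psi_1\sor\psi_2$ iff $\{v\}\models\psi_1$ or $\{v\}\models\psi_2$ (since any splitting $\{v\}=Y\cup Z$ forces $Y=\{v\}$ or $Z=\{v\}$, and the empty team satisfies everything), so $\{v\}\not\models\psi_1\wedge\psi_2$ iff $\{v\}\not\models\psi_1$ or $\{v\}\not\models\psi_2$ iff (by IH) $\{v\}\models\psi_1^\sim$ or $\{v\}\models\psi_2^\sim$ iff $\{v\}\models\psi_1^\sim\sor\psi_2^\sim = (\psi_1\wedge\psi_2)^\sim$; the case of $\sor$ is dual using the singleton semantics of $\wedge$. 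The remaining case $\phi=\neg\psi$ with $\psi$ itself possibly complex needs the rewrite rule for negation: here one has to check that $(\neg\psi)^\sim$ is computed by first rewriting $\psi^\sim$ and then applying the rules again, i.e. $(\neg\psi)^\sim = (\psi^\sim)^\sim$, and then argue $\{v\}\not\models\neg\psi$ iff $\{v\}\models\psi$ iff (double IH, or the observation that $\psi$ and $\psi^{\sim\sim}$ are equivalent on singletons by applying the claim twice) $\{v\}\models\psi^{\sim\sim} = (\neg\psi)^\sim$.

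The main obstacle I expect is bookkeeping around the negation connective and double-negated dependence atoms: one must pin down precisely how $\phi^\sim$ is defined when $\phi$ already contains $\neg$ (the paper says the rewrite procedure "always terminates on a unique dependence atom-free formula in negation normal form in \lang{\PDo}", so I would lean on that, treating $\phi\mapsto\phi^\sim$ as the normalization that pushes all negations inward and eliminates dependence atoms), and verify that $\neg\neg\dep(\vv{\alpha},\beta)$ indeed normalizes to $\top$ — consistent with the semantic computation $\neg\neg\dep(\vv{\alpha},\beta)\equiv\top$ — rather than to $\exists$-style garbage, which is exactly the propositional-vs-first-order discrepancy flagged just before the lemma. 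Concretely, $\dep(\vv{\alpha},\beta)^\sim = \bot$ and $\bot^\sim = \top$, so $(\neg\neg\dep(\vv{\alpha},\beta))^\sim = \top$, matching the semantics. Once the normalization is nailed down, every case is a short computation, and the flatness reduction in the first paragraph means no genuine team-level reasoning is ever required beyond the singleton case. I would also remark that since $\phi^\sim$ is always a classical formula, \Cref{flatv=classicalv} and the discussion of flat formulas guarantee the equivalence $\neg\phi\equiv\phi^\sim$ is genuinely an equivalence in \lang{\PDo}, so that \PD is a conservative extension of \PDo with respect to negation.
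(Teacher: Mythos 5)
Your proof is correct and its core content is the same inductive verification of the rewrite rules against the pointwise negation semantics that the paper carries out, but the organization is genuinely different. The paper proves $X\models\neg\phi\iff X\models\phi^\sim$ for arbitrary teams by induction on $\phi$, so e.g.\ in the $\wedge$ case it must exhibit the team split witnessing $X\models\psi^\sim\sor\chi^\sim$; you instead note at the outset that both sides are flat ($\neg\phi$ by the pointwise clause of \Cref{extend_pd_df}(b), and $\phi^\sim$ because it is a classical formula), reduce the lemma to the singleton claim $\{v\}\not\models\phi\iff\{v\}\models\phi^\sim$, and then every inductive case is a Boolean computation with no team-level reasoning. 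This buys a cleaner argument, and you also treat two cases the paper's own induction silently omits: dependence atoms with arbitrary flat arguments and nested negation $\phi=\neg\psi$ (the paper only handles $\neg p$ and $\dep(\vv{p},q)$, i.e.\ \lang{\PDo}-shaped formulas). One step to tighten: in the case $\phi=\neg\psi$ you apply the claim to $\psi^\sim$, which is not a subformula of $\neg\psi$ (and can even be longer than it), so a plain structural induction does not license this ``double application''; the fix is routine, either by observing that $\psi^\sim$ is a classical \lang{\PDo} formula, for which $\{v\}\not\models\psi^\sim\iff\{v\}\models(\psi^\sim)^\sim$ is immediate from classical semantics on singletons via \Cref{flatv=classicalv}, or by organizing the induction on negation-nesting depth so that classical formulas are handled first. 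With that adjustment, your argument is complete and in fact covers the full grammar of \PD more faithfully than the published proof.
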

\begin{proof}
We prove by induction on $\phi$ that $X\models\neg\phi\iff X\models\phi^\sim$ for all teams $X$.

The case $\phi=p$ or $\bot$ or $\top$ is easy. If $\phi=\neg p$, then $\phi^\sim=p$ and we have that $X\models \neg\neg p\iff \forall v\in X:\{v\}\not\models \neg p\iff \forall v\in X:\{v\}\models p\iff X\models p$.

If $\phi=\dep(\vv{p},q)$, then $\phi^\sim=\bot$ and we have that $X\models \neg\dep(\vv{p},q)\iff \forall v\in X: \{v\}\not\models \dep(\vv{p},q)\iff X=\emptyset\iff X\models \bot$.

If $\phi=\psi\wedge\chi$, then $\phi^\sim=\psi^\sim\otimes \chi^\sim$ and we have  that
\begin{align*}
X\models \neg(\psi\wedge\chi)\iff& \forall v\in X: \{v\}\not\models \psi\wedge\chi\\
\iff& \exists Y,Z\subseteq X\text{ s.t. }(\forall v\in Y: \{v\}\not\models\psi)\text{ and }(\forall u\in Z: \{u\}\not\models\chi)\\
\iff& \exists Y,Z\subseteq X\text{ s.t. }Y\models \neg \psi\text{ and }Z\models \neg\chi\\
\iff& \exists Y,Z\subseteq X\text{ s.t. }Y\models \psi^\sim\text{ and }Z\models\chi^\sim\\
&\text{(by the induction hypothesis)}\\
\iff& X\models \psi^\sim\otimes\chi^\sim.
\end{align*}

If $\phi=\psi\otimes\chi$, then $\phi^\sim=\psi^\sim\wedge \chi^\sim$ and we have by the induction hypothesis that
\begin{align*}
X\models \neg(\psi\otimes\chi)\iff& \forall v\in X: \{v\}\not\models \psi\otimes\chi\\
\iff& \forall v\in X: \{v\}\not\models \psi\text{ and }\{v\}\not\models\chi\\
\iff& X\models \neg\psi\text{ and }X\models\neg\chi\\
\iff& X\models \psi^\sim\wedge\chi^\sim\text{ (by the induction hypothesis)}.
\end{align*}
\end{proof}

It is evident from \Cref{extend_pd_df}(b) that the full negation of \PD is a \emph{semantic connective}. An $k$-ary connective $\cnt$ is called a \emph{semantic connective}, if 
\begin{center}
$\phi_1\equiv\psi_1,\,\dots,\,\phi_k\equiv\psi_k\,\Longrightarrow\,\cnt(\phi_1,\dots,\phi_k)\equiv\cnt(\psi_1,\dots,\psi_k)$.
\end{center}
 \Cref{neg_syn_sem} states that the semantical negation of \PD defined in \Cref{extend_pd_df}(b)  and the syntactic negation given by (\ref{neg_df})  coincide. It is worth emphasizing that in contrast to  \PD and other familiar logics with negation, the syntactic negation of first-order dependence logic is \emph{not} a semantic connective (regardless how double negated dependence atoms are treated), as shown by Burgess \cite{Burgess_negation_03} and V\"{a}\"{a}n\"{a}nen and Kontinen \cite{Negation_D_KV}. For an illustration,  $\forall x\dep(x)\equiv\forall x\forall y(x=y)$, whereas by the syntactic rewrite rules $(\forall x\dep(x))^\sim=\exists x\bot\not\equiv \exists x\exists y(x\neq y)=(\forall x\forall y(x=y))^\sim$.  %
 
The logics \PDo and \PTo  are expressively complete, therefore their extensions have the same expressive power as the original ones. Thus it is straightforward to verify that \Cref{basic_prop_pt} and \Cref{NF_lm} hold also for the extended logics \PD and \PT. 
 One can easily extend the deductive systems of the original logics by adding characterization rules for the negation and generalized dependence atoms and prove the sound and completeness theorems for the extensions. To characterize the negation, to the deductive systems of \PTo and \PDo one adds the obvious rules that characterize the equivalence between $\neg\phi$ and $\phi\to\bot$, and the obvious rules that characterize the rewrite rules in (\ref{neg_df}), respectively. To characterize generalized dependence atoms, to the deductive system of \PTo one adds obvious rules that correspond to  the equivalence in \Cref{extend_pt_df}(a). For \PD, following the idea in \cite{VY_PD} one generalizes the rules for dependence atoms in the deductive system of \PDo according to the equivalence in \Cref{extend_pt_df}(a) in an obvious way. To prove the completeness theorem for such obtained system of \PD,  one observes that whenever $\phi_1,\dots,\phi_n,\psi$ are flat,
\begin{equation}\label{gdep_nf}
\dep(\phi_1,\dots,\phi_n,\psi)\equiv\bigvee_{f\in \{0,1\}^X}\bigsor_{v\in X}(\phi_{1}^{v(\phi_1)}\wedge\dots\wedge \phi_{n}^{v(\phi_n)}\wedge\psi^{f(v)})
\end{equation}
holds, where $X=\{0,1\}^{\{\phi_1,\dots,\phi_n\}}$, $\phi_i^1=\phi_i$ and $\phi_i^0=\neg\phi_i$, and modifies the definition of a \emph{realization} of a generalised dependence atom accordingly. Note that if the arguments $\phi_1,\dots,\phi_n,\psi$ of a generalized dependence atom are not assumed to be flat, Equation (\ref{gdep_nf}) will no longer hold, and we do not see at this moment how to  obtain a complete axiomatization of the extended logic also in the general case. But since the notion of admissibility we study in this paper concerns \emph{theoremhood} of our logics only, and we intensionally defined the consequence relations of our logics in a semantic manner (see \Cref{def_logics}), this obstacle in the axiomatization of the extended logic is not essential for the main results of this paper. In view of this, for simplicity in \PD we only allow generalized dependence atoms with flat arguments.

\subsection{Closure under flat substitutions}

The consequence relations of the logics \PD,  \Inql, and \PT are not structural. In this section we prove, however, that the consequence relations of these logics are closed under \emph{flat substitutions}, i.e., substitutions $\sigma$ such that $\sigma$ maps propositional variables to flat formulas. 
To this end, we define the following translation on teams. 
For any valuation $v$ and any substitution $\sig$, define a valuation $v_\sig$ as
\[
 v_\sig (p) =  
  \left\{
   \begin{array}{ll}
    1 & \text{if $\{v\}\models \sig(p)$;} \\
    0 & \text{if $\{v\}\not\models \sig(p)$.} \\
   \end{array}
  \right.
\]
For any team $X$, we define
\(
 X_\sig = \{v_\sig \mid v \in X \}.
\)
Given a team $Y \subseteq X_\sig$, let $Y_X^{\sig}$ denote the set $\{v \in X \mid v_\sig \in Y\}$. Clearly $Y^\sig_X \subseteq X$ and $(Y_X^{\sig})_\sig = Y$. 

\begin{lem}
 \label{lemsigmaoperation} 
 Let $\LL\in \{\PD,  \Inql, \PT\}$.
For all formulas $\phi$  and all flat substitutions $\sig$ in \lang{\LL}, 
\[
 X \models \sig(\varphi) \iff X_\sig \models \varphi.
\]
\end{lem}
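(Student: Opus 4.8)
The plan is to prove the equivalence $X\models\sigma(\varphi)\iff X_\sigma\models\varphi$ by induction on the structure of $\varphi$, working uniformly for all three logics $\LL\in\{\PD,\Inql,\PT\}$ but paying attention to which connectives and atoms actually occur in each language. The crucial preliminary observation, which handles the base case, is that for a \emph{singleton} team $\{v\}$ and any flat formula $\alpha$ we have $\{v\}\models\sigma(\alpha)$ precisely when $\{v_\sigma\}\models\alpha$: this is immediate for a propositional variable $p$ by the very definition of $v_\sigma$, and it lifts to arbitrary flat $\alpha$ because flat formulas are built from flat atoms using $\wedge$, $\sor$ (classical disjunction on singletons), $\neg$ and (in the extended logics) $\to$, all of which behave classically on singletons; one also uses that $\sigma$ of a flat formula is again flat and that $\sigma$ commutes with the connectives. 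From this, $X\models\sigma(\alpha)\iff\forall v\in X\,\{v\}\models\sigma(\alpha)\iff\forall v\in X\,\{v_\sigma\}\models\alpha\iff X_\sigma\models\alpha$, using flatness of both $\sigma(\alpha)$ and $\alpha$; in particular this covers $p$, $\bot$, $\top$, and $\neg p$.

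For the inductive step I would go connective by connective. The cases $\wedge$ and $\vee$ (the latter only in $\Inql$ and $\PT$) are routine: $X\models\sigma(\phi)\wedge\sigma(\psi)\iff X\models\sigma(\phi)$ and $X\models\sigma(\psi)$, apply the IH, and recombine; similarly for $\vee$. For tensor $\sor$ (in $\PD$ and $\PT$): if $X\models\sigma(\phi)\sor\sigma(\psi)$, pick the witnessing split $X=Y\cup Z$ with $Y\models\sigma(\phi)$, $Z\models\sigma(\psi)$; then $X_\sigma=Y_\sigma\cup Z_\sigma$ and by IH $Y_\sigma\models\phi$, $Z_\sigma\models\psi$, so $X_\sigma\models\phi\sor\psi$. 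Conversely, given a split $X_\sigma=Y\cup Z$ witnessing $X_\sigma\models\phi\sor\psi$, lift it via the operation $(\cdot)_X^\sigma$: set $Y'=Y_X^\sigma$, $Z'=Z_X^\sigma$, note $X=Y'\cup Z'$ (since every $v\in X$ has $v_\sigma$ in $Y$ or in $Z$), and $Y'_\sigma=Y$, $Z'_\sigma=Z$ by the identity recorded before the lemma, so IH gives $Y'\models\sigma(\phi)$, $Z'\models\sigma(\psi)$. The implication case $\to$ (in $\Inql$, $\PT$) is the analogous but slightly more delicate one: $X\models\sigma(\phi)\to\sigma(\psi)$ means every $W\subseteq X$ with $W\models\sigma(\phi)$ has $W\models\sigma(\psi)$; to show $X_\sigma\models\phi\to\psi$ take $Y\subseteq X_\sigma$ with $Y\models\phi$, set $W=Y_X^\sigma\subseteq X$, use $W_\sigma=Y$ and IH to get $W\models\sigma(\phi)$, hence $W\models\sigma(\psi)$, hence $Y=W_\sigma\models\psi$ by IH again; the converse direction runs the same way using that $W_\sigma\subseteq X_\sigma$ for $W\subseteq X$.

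The remaining case is the (generalized) dependence atom. Here I would not argue directly from the semantic clause but instead reduce to the cases already handled, using the definitional equivalences from the excerpt: in $\PT$, $\dep(\vv\phi,\psi)$ is \emph{by definition} (\Cref{extend_pt_df}(a)) equivalent to $\bigwedge_i(\phi_i\vee(\phi_i\to\bot))\to(\psi\vee(\psi\to\bot))$, a formula built only from $\wedge,\vee,\to,\bot$ and the $\phi_i,\psi$; since $\sigma$ commutes with the dependence atom and with all these connectives, $\sigma(\dep(\vv\phi,\psi))$ is the same Boolean-ish combination of $\sigma(\phi_i),\sigma(\psi)$, so the result follows from the $\wedge,\vee,\to,\bot$ cases together with the IH on the $\phi_i$ and $\psi$. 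For $\PD$, the arguments $\vv\alpha,\beta$ are flat, and one can likewise invoke \eqref{dep_df}/\Cref{extend_pt_df}(a), or argue directly: $v\sim_{\vv\alpha}v'$ iff $\forall i\,(\{v\}\models\alpha_i\iff\{v'\}\models\alpha_i)$ iff (by the singleton observation applied to each flat $\alpha_i$) $\forall i\,(\{v_\sigma\}\models\sigma(\alpha_i)\iff\ldots)$ — wait, one must be careful with the direction of $\sigma$ here, so cleanly it is better to note $\{v\}\models\sigma(\alpha_i)\iff\{v_\sigma\}\models\alpha_i$, giving $v\sim_{\sigma(\vv\alpha)}v'$ in $X$ iff $v_\sigma\sim_{\vv\alpha}v'_\sigma$ in $X_\sigma$, and similarly for $\beta$; hence $X\models\dep(\sigma(\vv\alpha),\sigma(\beta))\iff X_\sigma\models\dep(\vv\alpha,\beta)$. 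I expect the main obstacle to be bookkeeping rather than conceptual: getting the lifting operation $(\cdot)_X^\sigma$ to interact correctly with subteams and splits in the $\sor$ and $\to$ cases (in particular, verifying $X=Y_X^\sigma\cup Z_X^\sigma$ from $X_\sigma=Y\cup Z$, which needs that distinct $v\in X$ may map to the same $v_\sigma$), and making sure the singleton base lemma for flat formulas is stated and proved before it is used in both the atom cases and (implicitly) in the $\PD$ dependence-atom case.
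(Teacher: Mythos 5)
Your overall strategy coincides with the paper's: the same induction, the same treatment of the base case via flatness of $\sigma(p)$, the same lifting of splits and subteams through $(\cdot)_X^\sigma$ in the $\sor$ and $\to$ cases, the same reduction of the \PT dependence atom to its defining clause \Cref{extend_pt_df}(a) (the paper makes this reduction a legitimate induction step by using a tailored complexity measure under which that clause has lower complexity than the atom), and for \PD a direct argument rather than replacing the atom by an equivalent \lang{\PD}-formula --- which is exactly the circularity the paper is at pains to avoid, since $\phi\equiv\phi'\Rightarrow\sigma(\phi)\equiv\sigma(\phi')$ only becomes available with \Cref{cons_flatsub}. Your computation with $\sim_{\vv{\alpha}}$, translating $v\sim_{\sigma(\vv{\alpha})}v'$ into $v_\sigma\sim_{\vv{\alpha}}v'_\sigma$, is a perfectly good alternative to the paper's route through the \Cref{extend_pt_df}(a) characterization.

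The one ingredient that does not survive scrutiny is your auxiliary ``singleton base lemma for flat formulas'' and its proof. Flatness is a semantic property, and flat formulas are not in general built from flat subformulas (for instance $\dep(p,q)\wedge p\wedge\neg p$ is flat but contains the non-flat $\dep(p,q)$), so the proposed sub-induction on the build-up of flat formulas does not get off the ground; moreover the appeal to ``$\sigma$ of a flat formula is again flat'' is precisely \Cref{flat_closure}, which the paper derives \emph{from} \Cref{lemsigmaoperation}, so invoking it here risks circularity. Fortunately the auxiliary lemma is dispensable: in the base case you only need it for propositional variables, where $\{v\}\models\sigma(p)\iff\{v_\sigma\}\models p$ is immediate from the definition of $v_\sigma$ (and flatness of $\sigma(p)$ is given, since $\sigma$ is a flat substitution); and in the \PD dependence-atom case the arguments $\alpha_i,\beta$ are proper subformulas, so the main induction hypothesis applied to them at the singleton team $\{v\}$ (note $\{v\}_\sigma=\{v_\sigma\}$) yields exactly $\{v\}\models\sigma(\alpha_i)\iff\{v_\sigma\}\models\alpha_i$. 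With that replacement, and with a complexity measure (or an explicit remark that each connective step uses the induction hypothesis only for its immediate arguments) to license the \PT dependence-atom reduction, your proof is essentially the paper's.
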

\begin{proof}

We prove this lemma for all three logics at the same time by induction on the complexity of $\varphi$, where we use the following complexity $c(\phi)$ on formulas in \lang{\PT}. The use of the complicated clause for the dependence atom will become clear in the proof below. 
\[
 \begin{array}{rcll}
  c(p) & = & 0           & \text{$p$ a propositional variable} \\
  c(\bot) & = & 0           &  \\
  c(\top) & = & 0           &  \\
  c(\neg \phi) & = & c(\phi)+1 &\\
  c(\phi \circ \psi) & = & c(\phi)+c(\psi)+1 & \circ \in \{\wedge,\rightarrow,\sor \} \\ 
  c(\depp{\phi}{\psi}) & = & \big(\sum_{i=1}^n(2c(\phi_i)+4)\big) +2c(\psi) + 4 & \text{where } \vec{\phi} = \phi_1,\dots, \phi_n. 
 \end{array}
\]


The cases $\phi=\bot$ and $\phi=\top$ are trivial. Since $\sig(p)$ is flat, the following equivalences hold:
\[
  X \models \sig(p) \iff \A v \in X (\{v\}\models \sig(p)) \iff 
  \A v_\sig \in X_\sig (\{v_\sig\} \models p) \iff X_\sig \models p.
\]
Thus the case $\phi=p$ is proved. 

Case $\phi=\depp{\theta}{\psi}$.  For \PT, from \Cref{extend_pt_df}(a) we know that $\phi$ is semantically equivalent to a formula  in its language whose subformulas are of lower complexity, thus this case is reduced to the other cases. However for \PD, the equivalent formula given by \Cref{extend_pt_df}(a) is not in its language, neither does the equivalent formula given by Equation (\ref{gdep_nf}). Since \PDo is expressively complete, there indeed exists a formula $\phi'$ in the language of \PDo that is equivalent to $\phi$. However, this translation is not done in a compositional manner, neither in an inductive manner (see \Cref{NF_lm}). We therefore cannot reduce this case to the other cases for \PD,
as the reduction would assume
\[\phi\equiv\phi'\Longrightarrow \sigma(\phi)\equiv\sigma(\phi'),\]
a fact that we  establish only in \Cref{cons_flatsub}. To avoid such a circular argument, we now proceed to prove this case for \PD directly, using the equivalent semantics given in \Cref{extend_pt_df}(a) and assuming that $\vec{\theta}$ and $\psi$ are flat. 

For the direction ``$\Longrightarrow$'', assume $X \models \dep(\sigma(\vec{\theta}),\sigma(\psi))$ and $Y\models \bigwedge_{i=1}^n(\theta_i\vee\neg\theta_i)$ for some $Y\subseteq X_\sigma$. As $(Y_X^\sigma)_\sigma=Y$ and $c(\bigwedge_{i=1}^n(\theta_i\vee\neg\theta_i))<\Sigma_{i=1}^n(2c(\theta_i)+4)<c(\depp{\theta}{\chi})$, by the induction hypothesis, we obtain that $Y^\sigma_X\models \bigwedge_{i=1}^n(\sigma(\theta_i)\vee\neg\sigma(\theta_i))$. Clearly $Y_X^\sigma\subseteq X$, thus the assumption implies that $Y_X^\sigma\models\sigma(\psi)\vee\neg\sigma(\psi)$, which by the induction hypothesis again gives the desired $(Y_X^\sigma)_\sigma\models\psi\vee\neg\psi$, because $c(\psi\vee\neg\psi)=2c(\psi)+3<c( \depp{\theta}{\psi})$.
The other direction ``$\Longleftarrow$'' is symmetric, using $Y_\sigma\subseteq X_\sigma$ for any $X,Y$ with $Y\subseteq X$.

The cases that $\phi=\psi\wedge\chi$ and $\phi=\psi\vee\chi$ follow immediately from the induction hypothesis. 

Case $\phi=\psi \sor \chi$. 
We first prove the direction ``$\Longrightarrow$''. Assume $X \models \sig(\varphi)$ and consider $Y,Z \subseteq X$ such that $X = Y \cup Z$ and $Y \models \sig(\psi)$ and $Z \models \sig(\chi)$. Using that $Y_\sig \cup Z_\sig = X_\sig$, this implies $X_\sig\models \psi \sor \chi$ by the induction hypothesis. 

For the direction ``$\Longleftarrow$'', assume $X_\sig \models \varphi$ and consider $Y,Z \subseteq X_\sig$ such that $X_\sig = Y \cup Z$, $Y \models \psi$ and $Z \models \chi$. Thus 
$Y_X^{\sig}\models \sig(\psi)$ and $Z_X^{\sig}\models \sig(\chi)$ by the induction hypothesis. Since $X = Y_X^{\sig} \cup Z_X^{\sig}$, this implies $X \models \sig(\psi)\sor \sig(\chi)$, as required.

Case $\phi=\psi\imp \chi$. We first prove the direction ``$\Longrightarrow$''. Assume $X \models \sig(\varphi)$ and consider 
$Y \subseteq X_\sig$ such that $Y \models \psi$. As $(Y_X^{\sig})_\sig = Y$, $Y_X^{\sig} \models \sig(\psi)$ follows by the induction hypothesis. And as $Y^\sig_X \subseteq X$, this implies $Y_X^{\sig} \models \sig(\chi)$. Hence $Y\models \chi$ by the induction hypothesis, as required.
The direction ``$\Longleftarrow$'' is similar.

Case $\phi=\neg\psi$. It follows from the induction hypothesis that $X\models \neg \sigma(\psi)\iff \forall v\in X:\{v\}\not\models \sigma(\psi)\iff \forall v\in X: \{v_\sigma\}\not\models \psi\iff X_\sigma\models\neg\psi$.
\end{proof}

\begin{lem}\label{flat_closure}
The set of flat formulas in \lang{\PT} is closed under flat substitutions, i.e., whenever $\phi$ is a flat formula and $\sigma$ is a flat substitution, $\sigma(\phi)$ is flat too.
\end{lem}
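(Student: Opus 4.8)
The plan is to read off flatness of $\sigma(\phi)$ directly from \Cref{lemsigmaoperation}, which already tells us how truth of $\sigma(\phi)$ on a team relates to truth of $\phi$ on the transformed team, together with the semantic definition of flatness; no induction on $\phi$ is needed. First I would record that the statement is meaningful: since the extended language \PT admits negation and dependence atoms with arbitrary arguments, a flat substitution $\sigma$ sends every formula of \lang{\PT} to a formula of \lang{\PT}, so $\sigma(\phi)\in$ \lang{\PT}.

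Then, for a fixed flat formula $\phi$ in \lang{\PT}, a fixed flat substitution $\sigma$, and an arbitrary team $X$, I would run the following chain of equivalences. By \Cref{lemsigmaoperation}, $X\models\sigma(\phi)$ iff $X_\sigma\models\phi$. Since $\phi$ is flat, $X_\sigma\models\phi$ iff $\{w\}\models\phi$ for every $w\in X_\sigma$. By the definition $X_\sigma=\{v_\sigma\mid v\in X\}$, this is the same as: $\{v_\sigma\}\models\phi$ for every $v\in X$. Finally, using $\{v\}_\sigma=\{v_\sigma\}$ and applying \Cref{lemsigmaoperation} to the singleton team $\{v\}$, we get $\{v_\sigma\}\models\phi$ iff $\{v\}\models\sigma(\phi)$. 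Concatenating, $X\models\sigma(\phi)$ iff $\{v\}\models\sigma(\phi)$ for all $v\in X$, i.e.\ $\sigma(\phi)$ is flat.

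I do not expect a genuine obstacle here: all the real work has been done in \Cref{lemsigmaoperation} (which is proved for \PT), and what remains is the trivial identity $\{v\}_\sigma=\{v_\sigma\}$ together with unwinding the definition of flatness. The one point worth flagging is why the semantic argument is preferable to a syntactic one: flat formulas of \lang{\PT} are not cut out by a simple grammar — for instance $\neg\psi$ is flat for every $\psi$ whether or not $\psi$ is flat, and dependence atoms contribute in a non-grammatical way as well — so an induction on $\phi$ would force one to thread a somewhat delicate invariant, which the route above avoids entirely.
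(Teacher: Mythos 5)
Your argument is correct and is essentially the paper's own proof: both apply \Cref{lemsigmaoperation} twice (once to the team $X$ and once to each singleton $\{v\}$, via $\{v\}_\sigma=\{v_\sigma\}$) and use the flatness of $\phi$ on $X_\sigma$ in between; the only cosmetic difference is that you phrase it as a chain of equivalences while the paper proves the one nontrivial direction. No gap.
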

\begin{proof}
Suppose $X$ is a team such that for all $v \in X$, $\{v\} \models \sig(\phi)$. To show that $X \models \sig(\phi)$, by 
\Cref{lemsigmaoperation}, it suffices to show that $X_\sig \models \phi$. As $\phi$ is flat, 
we therefore have to show that $\{v_\sig \}\models \phi$ for all $s \in X$. Again by Lemma~\ref{lemsigmaoperation} it suffices to show that $\{v\} \models \sig(\phi)$ for all $v \in X$. But that is what we assumed, so we are done. 
\end{proof}

As a consequence of the above lemma, for every generalized dependence atom $\dep(\phi_1,\dots,\phi_n,\psi)$ in \lang{\PD}, where $\phi_1,\dots,\phi_n,\psi$ are flat formulas, the resulting formula $\dep(\sigma(\phi_1),\dots,\sigma(\phi_n),\psi)$ under an arbitrary flat substitution $\sigma$ is still a well-formed formula in \lang{\PD}. This shows that flat substitutions are well-defined in \PD.

\begin{thm}\label{cons_flatsub}
The consequence relations of \PD, \Inql, and \PT are closed under flat substitutions. In particular, for all flat substitutions $\sigma$, we have that $\phi\equiv\psi$ implies $\sigma(\phi)\equiv\sigma(\psi)$.
\end{thm}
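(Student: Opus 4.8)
The plan is to derive \Cref{cons_flatsub} directly from \Cref{lemsigmaoperation}, which already does all the real work. Recall that $\Gamma\vdash_\LL\phi$ means (for the relevant $\LL$) that every team validating $\bigwedge\Gamma$ validates $\phi$. So suppose $\phi\vdash_\LL\psi$ and let $\sigma$ be a flat substitution in \lang{\LL}; I want $\sigma(\phi)\vdash_\LL\sigma(\psi)$. Take an arbitrary team $X$ with $X\models\sigma(\phi)$. By \Cref{lemsigmaoperation}, $X\models\sigma(\phi)$ iff $X_\sigma\models\phi$, so $X_\sigma\models\phi$; since $\phi\models\psi$ this gives $X_\sigma\models\psi$, and applying \Cref{lemsigmaoperation} in the other direction yields $X\models\sigma(\psi)$. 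As $X$ was arbitrary, $\sigma(\phi)\models\sigma(\psi)$, i.e. $\sigma(\phi)\vdash_\LL\sigma(\psi)$. The extension to a finite premise set $\Gamma=\{\phi_1,\dots,\phi_m\}$ is immediate since $\sigma$ commutes with $\wedge$, so $\sigma(\bigwedge\Gamma)=\bigwedge_i\sigma(\phi_i)$, and one runs the same argument with $\bigwedge\Gamma$ in place of $\phi$.

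There is one bookkeeping point worth checking before the argument goes through: that $\sigma(\phi)$ and $\sigma(\psi)$ really are well-formed formulas in \lang{\LL}, so that the claim even makes sense. For \Inql\ and \PT\ this is automatic from the grammar (every connective and atom is closed under substitution of arbitrary formulas for propositional variables). For \PD\ it is exactly the content of the remark following \Cref{flat_closure}: the only delicate case is the generalized dependence atom $\dep(\vv\alpha,\beta)$, whose arguments must stay flat, and \Cref{flat_closure} guarantees that flat $\alpha_i,\beta$ are sent to flat $\sigma(\alpha_i),\sigma(\beta)$ by a flat substitution. So I would invoke \Cref{flat_closure} (or its stated consequence) for the well-definedness in \PD\ and note that it is trivial for the other two logics.

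Finally, the ``in particular'' clause is just the special case applied symmetrically: if $\phi\equiv\psi$, then $\phi\vdash_\LL\psi$ and $\psi\vdash_\LL\phi$, so by what was just proved $\sigma(\phi)\vdash_\LL\sigma(\psi)$ and $\sigma(\psi)\vdash_\LL\sigma(\phi)$, i.e. $\sigma(\phi)\equiv\sigma(\psi)$. This also retroactively justifies the step in the proof of \Cref{lemsigmaoperation} (case $\phi=\dep(\vv\theta,\psi)$ for \PD) that was deferred to here, so there is no circularity: \Cref{lemsigmaoperation} was proved without using $\phi\equiv\phi'\Rightarrow\sigma(\phi)\equiv\sigma(\phi')$, and only now do we obtain that implication as a corollary.

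There is no real obstacle here — the theorem is essentially a corollary of \Cref{lemsigmaoperation}. The only thing to be careful about is not to accidentally reintroduce the circularity the authors flagged in the dependence-atom case of \Cref{lemsigmaoperation}; since that lemma's proof only ever used the induction hypothesis on genuinely lower-complexity formulas and never the compositional claim $\phi\equiv\phi'\Rightarrow\sigma(\phi)\equiv\sigma(\phi')$, we are free to deduce the latter here.
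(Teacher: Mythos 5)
Your proof is correct and follows essentially the same route as the paper: the paper also derives \Cref{cons_flatsub} directly from \Cref{lemsigmaoperation} via the chain $X\models\sigma(\phi)\Rightarrow X_\sigma\models\phi\Rightarrow X_\sigma\models\psi\Rightarrow X\models\sigma(\psi)$, and your extra remarks (reduction of finite $\Gamma$ to a single conjunction, well-definedness in \lang{\PD} via \Cref{flat_closure}, and the non-circularity with the dependence-atom case) only make explicit points the paper treats as immediate or handles in the surrounding text.
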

\begin{proof}
By the definition of the consequence relations, 
it suffices to prove that for all formulas $\phi$ and $\psi$, $\phi\models\psi\Longrightarrow\sigma(\phi)\models\sigma(\psi)$ holds for all flat substitutions $\sigma$.

Assume $\phi\models\psi$. We have that for any team $X$, any flat substitution $\sigma$,
\begin{align*}
X\models\sigma(\phi)&\Longrightarrow X_\sigma\models\phi~~\text{ (by \Cref{lemsigmaoperation})}\\
&\Longrightarrow X_\sigma\models\psi~~\text{ (by the assumption)}\\
&\Longrightarrow X\models\sigma(\psi)~~\text{ (by \Cref{lemsigmaoperation})}
\end{align*}
Hence $\sigma(\phi)\models\sigma(\psi)$.
\end{proof}

\section{Flat formulas and projective formulas}

Having proved that our logics are closed under flat substitutions  we work towards the proof of our main results by showing that flatness in these logics is nothing but projectivity, a key notion in the study of admissible rules. 

As the building blocks of the normal form of formulas in \lang{\PT}, the formulas $\Theta_X$, defined in Section~\ref{secnf}, turn out to be of particular interest. They actually serve as a syntactic characterization of flat formulas, as the following lemma shows. 


\begin{lem}\label{flat_charact}
Let $\phi(p_1,\dots,p_n)$ be a consistent formula in \lang{\PT}. The following are equivalent.
\begin{description}
\item[(i)] $\phi$ is flat;
\item[(ii)] $\phi\equiv\Theta_X$ for some nonempty team $X$ on $\{p_1,\dots,p_n\}$;
\item[(iii)] $\phi\equiv\neg\neg\phi$;
\item[(iv)] $\models\phi\sor\neg\phi$
\end{description}
\end{lem}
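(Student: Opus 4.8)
The plan is to prove the cycle of implications $(i)\Rightarrow(iv)\Rightarrow(iii)\Rightarrow(ii)\Rightarrow(i)$, using the normal form theorem (\Cref{NF_lm}), \Cref{theta_prop}, and the semantic clauses for $\neg$ and $\sor$ from \Cref{extend_pt_df}. Throughout, fix $V=\{p_1,\dots,p_n\}$ and recall that by locality it suffices to reason about teams on $V$, and that $\{0,1\}^V$ is the largest such team.

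First I would prove $(i)\Rightarrow(iv)$. Assume $\phi$ is flat. To show $\models\phi\sor\neg\phi$, take an arbitrary team $X$ (on $V$) and split it as $X=Y\cup Z$ where $Y=\{v\in X:\{v\}\models\phi\}$ and $Z=\{v\in X:\{v\}\not\models\phi\}$. By flatness $Y\models\phi$, and by \Cref{extend_pt_df}(b) $Z\models\neg\phi$; hence $X\models\phi\sor\neg\phi$. Next, $(iv)\Rightarrow(iii)$: the inclusion $\phi\models\neg\neg\phi$ is immediate from the empty team property together with \Cref{extend_pt_df}(b) applied twice (if $\{v\}\models\phi$ for a given $v$, then $\{v\}\not\models\neg\phi$; so if $X\models\phi$ then no $v\in X$ satisfies $\{v\}\models\neg\phi$, i.e.\ $X\models\neg\neg\phi$ — actually this direction needs downward closure of $\phi$, which holds for all \PT-formulas by \Cref{basic_prop_pt}). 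For the converse, suppose $X\models\neg\neg\phi$. By $(iv)$, $X\models\phi\sor\neg\phi$, so $X=Y\cup Z$ with $Y\models\phi$ and $Z\models\neg\phi$. Since $Z\subseteq X$ and $X\models\neg\neg\phi$, downward closure gives $Z\models\neg\neg\phi$; but $Z\models\neg\phi$ forces, via \Cref{extend_pt_df}(b), $Z=\emptyset$. Hence $X=Y\models\phi$. Thus $\phi\equiv\neg\neg\phi$.

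Then $(iii)\Rightarrow(ii)$: by the normal form theorem (\Cref{NF_lm}(ii), in the \PT/\Inql form) there is a finite family $\{X_i:i\in I\}$ of teams on $V$ with $\phi\bigsqcup_{i\in I}\Theta_{X_i}$; since $\phi$ is consistent, $I\neq\emptyset$ and we may assume each $X_i$ is nonempty. The team $X:=\bigcup_{i\in I}X_i$ satisfies $\Theta_X\equiv\neg\neg\bigbor_{i\in I}\bigbor_{v\in X_i}(\cdots)$, and one checks directly (or via \Cref{theta_prop}, noting $Y\models\Theta_X\iff Y\subseteq X\iff (\exists v\in Y$-by-$v$ membership$)$) that $\Theta_X$ is the "flattening" of $\phi$: for any team $Y$, $Y\models\neg\neg\phi$ iff every $v\in Y$ has $\{v\}\models\Theta_{X_i}$ for some $i$, i.e.\ $v\in X_i$ for some $i$, i.e.\ $Y\subseteq X$, i.e.\ $Y\models\Theta_X$. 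Combined with $(iii)$ this gives $\phi\equiv\neg\neg\phi\equiv\Theta_X$ with $X$ nonempty. Finally $(ii)\Rightarrow(i)$ is immediate: $\Theta_X$ is built from propositional atoms using only $\neg$, $\bor$ (or $\sor$) and $\wedge$ in the classical part, and each $\Theta_{X_i}$ is flat because by \Cref{theta_prop} $Y\models\Theta_X\iff Y\subseteq X\iff \forall v\in Y(\{v\}\subseteq X)\iff\forall v\in Y(\{v\}\models\Theta_X)$.

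The main obstacle I anticipate is the implication $(iii)\Rightarrow(ii)$: making precise that "$\neg\neg\phi$ is definable by a single $\Theta_X$" requires care, since the normal form of $\phi$ a priori is a disjunction $\bigvee_i\Theta_{X_i}$, not a single $\Theta_X$. The key observation that dissolves this is that double negation commutes with intuitionistic disjunction over $\Theta$-formulas in the sense that $\neg\neg\bigvee_i\Theta_{X_i}\equiv\Theta_{\bigcup_i X_i}$ — this follows because $\Theta_{X_i}$ is, up to equivalence, "$\{v\}\models\cdot$ iff $v\in X_i$" on singletons, and $\neg\neg(\cdot)$ is exactly the flat closure operator. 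One must also be slightly careful that the equivalences are happening inside \lang{\PT} (so that \Cref{theta_prop} and the normal form theorem both apply, which they do since these were noted to hold for the extended logics); no issue arises because every formula and every $\Theta_X$ here lives in \lang{\PT}.
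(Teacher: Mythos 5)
Your proof is correct, but it takes a genuinely different route from the paper's. The paper organizes everything around (i): it gets (ii)$\Rightarrow$(i) and (iii)$\Rightarrow$(i) from the fact that negated formulas are flat, (i)$\Rightarrow$(iii) from the definition of negation, proves (i)$\Rightarrow$(iv) and (iv)$\Rightarrow$(i) directly, and its only substantial step is (i)$\Rightarrow$(ii): write $\phi\equiv\bigvee_{i=1}^k\Theta_{X_i}$ with the $X_i$ \emph{maximal} teams, pick witnesses $v_i\in X_i\setminus X_{i+1}$ (and $v_k\in X_k\setminus X_1$), and observe that if $k>1$ the team $\{v_1,\dots,v_k\}$ refutes flatness, so $k=1$. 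You instead close the cycle (i)$\Rightarrow$(iv)$\Rightarrow$(iii)$\Rightarrow$(ii)$\Rightarrow$(i), and the work the paper puts into that maximality/witness argument is replaced by your key observation in (iii)$\Rightarrow$(ii) that $Y\models\neg\neg\phi$ iff $\{v\}\models\phi$ for every $v\in Y$, whence $\neg\neg\bigvee_{i}\Theta_{X_i}\equiv\Theta_{\bigcup_i X_i}$: double negation is exactly the flat closure and collapses the disjunctive normal form into a single $\Theta$-formula. Your route buys a cleaner and arguably more informative argument (it exhibits $\neg\neg\phi\equiv\Theta_X$ with $X=\{v:\{v\}\models\phi\}$ for an arbitrary formula, with no appeal to maximality), while the paper's proof shows directly how flatness forces the normal form to consist of a single maximal team; both rest on \Cref{NF_lm}, \Cref{theta_prop} and downwards closure, which do carry over to \PT as you note. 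A small bonus on your side is that your (i)$\Rightarrow$(iv) is the full splitting argument ($Y=\{v\in X:\{v\}\models\phi\}$, $Z=X\setminus Y$), which is more explicit than the paper's remark that checking singletons suffices. One cosmetic nit: in (ii)$\Rightarrow$(i) the phrase about $\Theta_X$ being ``built from propositional atoms using only $\neg,\vee,\wedge$'' is loose (in \lang{\PT} it is a doubly negated intuitionistic disjunction); keep your second justification via \Cref{theta_prop} ($Y\models\Theta_X$ iff $Y\subseteq X$ iff $\{v\}\models\Theta_X$ for all $v\in Y$), or simply say that $\Theta_X$ is a negated formula and negated formulas are flat, which is the paper's shortcut.
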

\begin{proof}
(ii)$\Rightarrow$(i) and (iii)$\Rightarrow$(i) follow from the fact that negated formulas are flat, and (i)$\Rightarrow$(iii) follows immediately from the definition of negation.

(i)$\Rightarrow$(ii):  In view of \Cref{theta_prop} and \Cref{NF_lm}, without loss of generality, we may assume that $\phi(p_1,\dots,p_n)=\bigvee_{i=1}^k\Theta_{X_i}$, where $\{X_1,\dots,X_k\}$ is a collection of some nonempty maximal (with respect to set inclusion) teams on $\{p_1,\dots,p_n\}$. Suppose $\phi$ is flat and $k>1$. For each $1\leq i<k$, pick $v_i\in X_i\setminus X_{i+1}$ and pick $v_k\in X_k\setminus X_1$. The maximality of the $X_i$'s guarantees that such $v_i$'s exist. Since $\{v_i\}\subseteq X_i$ and $\{v_1,\dots,v_k\}\nsubseteq X_i$ for all $1\leq i\leq k$, by \Cref{theta_prop},  $\{v_i\}\models \Theta_{X_i}$ and $\{v_1,\dots,v_k\}\not\models \Theta_{X_i}$ for all $1\leq i\leq k$, thereby $\{v_i\}\models \phi$ for all $1\leq i\leq k$ whereas $\{v_1,\dots,v_k\}\not\models \phi$. Hence we conclude that $k=1$ and $\phi=\Theta_{X_1}$, as required.

(i)$\Rightarrow$(iv): If $\phi$ is flat, to show (iv), it suffices to show $\{v\}\models\phi\otimes\neg\phi$, i.e., $\{v\}\models\phi$ or $\{v\}\models\neg\phi$, for all valuations $v$. But this is also obvious.

(iv)$\Rightarrow$(i): Suppose $\{v\}\models\phi$ for all valuations $v$ in a team $X$. Then $Y\not\models\neg\phi$ for all nonempty $Y\subseteq X$. Now, if $\models\phi\otimes\neg\phi$, then we must have that $X\models\phi$, which shows that  $\phi$ is flat.
\end{proof}

Since some of the logics we consider in this paper do not have implication in the language, and none of them is closed under uniform substitution, we modify the usual definition of projective formula.
\begin{dfn}[Projective formula]\label{def_proj}
Let \LL be a logic, and \Su a set of \LL-substitutions. A formula $\phi$ in \lang{\LL} is said to be  \emph{\Su-projective} in  \LL if there exists $\sigma\in \Su$  such that 
\begin{description}
\item[(a)] $\vdash_\LL\sigma(\phi)$,
\item[(b)] $\phi,\sigma(p)\vdash_\LL p$ and $\phi, p\vdash_\LL\sigma(p)$ for all propositional variables $p$. 
\end{description}
Such substitutions are called \emph{\Su-projective unifiers} for $\phi$ in \LL.
\end{dfn} 

Because of the Deduction Theorem (\Cref{basic_prop_pt}) of our logics that has implication in their languages the notion of projectivity can be formulated purely in terms of theoremhood. A standard inductive proof shows that the condition in \Cref{def_proj}(b) implies that $\phi,\sigma(\psi)\vdash_\LL \psi$ and $\phi,\psi\vdash_\LL\sigma(\psi)$ hold for all formulas $\phi$ and $\psi$ of our logics.

The proof of the following lemma uses what is known as {\em Prucnal's trick}, which consists of a method to prove projectivity via a connection between valuations and substitutions. 

\begin{lem}\label{theta_proj_pid}
Let $\LL\in\{\Inql,\PT\}$ and $X$ a nonempty set of teams on a finite set of propositional variables. The formula $\Theta_X$ in \lang{\LL} (defined by Equation (\ref{Theta_X_df2})) is \Fl-projective in \LL , where \Fl is the class of all flat substitutions. 
\end{lem}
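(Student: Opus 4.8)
The plan is to use Prucnal's trick: define a flat substitution $\sigma$ directly from the team $X$, and check the two conditions of \Cref{def_proj}. Write $V=\{p_1,\dots,p_n\}$ for the finite set of variables, so that $X\subseteq\{0,1\}^V$ is a nonempty team. Since $X$ is nonempty, fix some valuation $w\in X$. The idea is to let $\sigma$ collapse everything onto $X$ by replacing each $p_j$ by a flat formula that is ``true on $X$'' in a suitable sense. Concretely I would set, for each $1\le j\le n$,
\[
  \sigma(p_j) \;:=\; \bigl(p_j \wedge \Theta_X\bigr) \;\otimes\; \bigl(p_j^{w(p_j)} \wedge \neg\Theta_X\bigr),
\]
or a close variant — the point is that on a singleton $\{v\}$ with $v\in X$ this evaluates to $v(p_j)$ (the first disjunct fires), while on a singleton $\{v\}$ with $v\notin X$ it evaluates to the fixed truth value $w(p_j)$ (the second disjunct fires, since $\{v\}\models\neg\Theta_X$ by \Cref{theta_prop}). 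Each $\sigma(p_j)$ is built from propositional atoms, $\Theta_X$ (which is flat, being a double negation — see \Cref{flat_charact}), negations, conjunctions and tensors, hence is flat by \Cref{flat_closure}; so $\sigma$ is a flat substitution, and by \Cref{flat_closure} and the remark following \Cref{flat_closure} it is well-defined on $\LL\in\{\Inql,\PT\}$.

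Next I would verify condition (a), $\vdash_\LL \sigma(\Theta_X)$. By \Cref{lemsigmaoperation}, for any team $Y$ we have $Y\models\sigma(\Theta_X)$ iff $Y_\sigma\models\Theta_X$, and by \Cref{theta_prop} the latter holds iff $Y_\sigma\subseteq X$. So it suffices to show $Y_\sigma\subseteq X$ for every team $Y$, i.e. that $v_\sigma\in X$ for every valuation $v$. This is exactly where the construction is engineered to work: for any valuation $v$, the value $v_\sigma(p_j)=1$ iff $\{v\}\models\sigma(p_j)$, and by the case analysis above (whether $v\in X$ or not) one checks that the resulting valuation $v_\sigma$ always lies in $X$ — if $v\in X$ then $v_\sigma$ agrees with $v$ on $V$, and if $v\notin X$ then $v_\sigma$ agrees with $w$ on $V$, and $w\in X$. (One must be a little careful that $v_\sigma$ is a valuation on all of $\mathrm{Prop}$, but by locality/\Cref{basic_prop_pt} only its restriction to $V$ matters.) Hence $\models\sigma(\Theta_X)$, giving (a).

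For condition (b) I need $\Theta_X,\sigma(p_j)\vdash_\LL p_j$ and $\Theta_X,p_j\vdash_\LL\sigma(p_j)$ for each $j$; by the Deduction Theorem and downward closure it is enough to show, for every team $Y$ with $Y\models\Theta_X$ (equivalently $Y\subseteq X$ by \Cref{theta_prop}), that $Y\models\sigma(p_j)\leftrightarrow p_j$ — more precisely that $Y\models\sigma(p_j)\to p_j$ and $Y\models p_j\to\sigma(p_j)$. Since $Y\subseteq X$, every $v\in Y$ lies in $X$, so on each singleton $\{v\}\subseteq Y$ the formula $\sigma(p_j)$ evaluates (via the first tensor-disjunct, using $\{v\}\models\Theta_X$) to the same truth value as $p_j$; because both $p_j$ and $\sigma(p_j)$ are flat, this singleton-wise agreement lifts to all subteams of $Y$, which is precisely the semantic content of the two entailments. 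This establishes that $\sigma$ is an \Fl-projective unifier for $\Theta_X$.

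I expect the main obstacle to be pinning down the exact form of $\sigma(p_j)$ so that all three verifications go through simultaneously in \emph{both} $\Inql$ and $\PT$ — in particular making sure the chosen formula is genuinely in $\lang{\LL}$ for $\LL=\Inql$ (which has no tensor, so one would there use intuitionistic disjunction $\vee$ and the equivalence $\Theta_X=\neg\neg(\cdots)$ together with the fact that over flat disjuncts $\otimes$ and $\vee$ coincide on singletons) and double-checking the ``$v\notin X$'' branch really forces $v_\sigma$ into $X$ rather than merely into the downward closure. Everything else is routine bookkeeping with \Cref{lemsigmaoperation}, \Cref{theta_prop}, \Cref{flat_charact} and the Deduction Theorem.
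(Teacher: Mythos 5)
Your overall strategy (Prucnal-style substitution built from a fixed $w\in X$, verified via \Cref{lemsigmaoperation} and \Cref{theta_prop}) is the right one, but the concrete substitution you wrote down does not satisfy condition (a) of \Cref{def_proj}. With the paper's convention $p^1=p$, $p^0=\neg p$, your second disjunct $p_j^{w(p_j)}\wedge\neg\Theta_X$ evaluates on a singleton $\{v\}$ with $v\notin X$ to ``$v(p_j)=w(p_j)$'', \emph{not} to the constant value $w(p_j)$; so $v_\sigma$ records $v$'s pattern of agreement with $w$ rather than being $w$ itself, and it need not lie in $X$. Concretely, take $V=\{p,q\}$, $X=\{(0,0)\}$, so $w=(0,0)$ and $\sigma(q)=(q\wedge\Theta_X)\sor(\neg q\wedge\neg\Theta_X)$; for $v=(1,0)$ one gets $v_\sigma=(0,1)\notin X$, hence $\{v\}_\sigma\not\models\Theta_X$ and, by \Cref{lemsigmaoperation}, $\{v\}\not\models\sigma(\Theta_X)$, so $\sigma(\Theta_X)$ is not valid. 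The intended behaviour requires the second disjunct to be $\neg\Theta_X$ alone when $w(p_j)=1$ and to be dropped when $w(p_j)=0$.

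There is a second gap in the $\Inql$ case: replacing $\otimes$ by $\vee$ does not just cost you language membership, it costs you flatness. For instance $(p\wedge\Theta_X)\vee(\dots\wedge\neg\Theta_X)$ fails on a two-element team mixing a valuation in $X$ with one outside $X$ even when both singletons satisfy it, so the resulting $\sigma$ is not in \Fl and neither the definition of \Fl-projectivity nor \Cref{lemsigmaoperation} applies. The paper avoids both problems by taking the classical Prucnal substitution $\sigma(p)=\Theta_X\wedge p$ if $w(p)=0$ and $\sigma(p)=\Theta_X\to p$ if $w(p)=1$: these formulas are flat, lie in both \lang{\Inql} and \lang{\PT}, give $\vdash_\LL\Theta_X\to(\sigma(p)\leftrightarrow p)$ immediately (your verification of (b) would then go through), and condition (a) follows by the standard induction showing $\vdash_\CPC\sigma(\psi)$ iff $w(\psi)=1$ together with \Cref{flatv=classicalv} (equivalently, by your $v_\sigma$-computation, since then $v_\sigma=v$ for $v\in X$ and $v_\sigma=w$ for $v\notin X$). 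With that repair your argument matches the paper's; as it stands, the unifier you propose does not unify $\Theta_X$ and, for $\Inql$, is not a flat substitution.
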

\begin{proof}

Put $\phi=\Theta_X$ and pick $v\in X$. View $\phi$ as a formula of \CPC, clearly we have $v(p_{1}^{v(p_1)}\wedge\dots\wedge p_{n}^{v(p_n)})=1$, thereby $v(\phi)=1$. Define a substitution $\sigma_v^{\phi}$  as follows:
\begin{equation}\label{prucnal_sigma}
\sigma_v^{\phi}(p)=\begin{cases}
\phi\wedge p,&\text{ if }v(p)=0;\\
\phi\to p,&\text{ if }v(p)=1.
\end{cases}
\end{equation}

Put $\sigma=\sigma_v^{\phi}$. Clearly, $\sigma(p)$ (in both cases) is classical, thus flat.

By a standard inductive argument, one proves that 
\begin{equation}\label{cpc_sigma}
\vdash_\CPC\sigma(\psi) \iff v(\psi)=1
\end{equation}
 for all subformulas $\psi$ of $\phi$. Now, as $v(\phi)=1$, we obtain $\vdash_\CPC\sigma(\phi)$. Since $\phi$ is a classical formula, by \Cref{flatv=classicalv} we derive $\vdash_{\LL}\sigma(\phi)$.  Moreover, it follows from the definition of $\sigma$ that $\vdash_{\LL}\phi\to(\sigma(p)\leftrightarrow   p)$ holds for all  $p\in \rm{Prop}$. 
Hence we conclude that $\phi$ is \Fl-projective in \LL.
\end{proof}

It is known that negated formulas $\neg\phi$ are projective in every intermediate logic \LL, it follows, for example, from Ghilardi's characterization in \cite{ghilardi99}. Here we prove that the same holds for the negative variants of intermediate logics and that the projective unifiers involved are moreover stable. 

\begin{lem}\label{neg_proj}
Let \LL be an intermediate logic. Every consistent formula $\neg\phi$ is \St-projective in \LLn, where \St is the class of all stable substitutions.
\end{lem}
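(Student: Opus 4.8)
The plan is to imitate the Prucnal-trick argument of \Cref{theta_proj_pid}, but now working inside \LLn rather than \LL, and using a stable substitution instead of a merely flat one. Fix a consistent formula $\neg\phi$ in \lang{\LLn}, say with propositional variables among $p_1,\dots,p_n$. The natural candidate for a projective unifier is the analogue of \eqref{prucnal_sigma}: pick a valuation $v$ witnessing consistency of $\neg\phi$ (more precisely, one should work with the negative translation and the fact that $\neg\phi$ is equivalent in \LLn to a $\Theta_X$-style formula, as in the lemma just before \Cref{inql=kpn}), and set $\sigma(p) = \neg\phi \wedge p$ or $\sigma(p) = \neg\phi \to p$ according to whether $v(p)=0$ or $v(p)=1$. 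The first thing to check is that such $\sigma$ is in \St, i.e.\ that $\sigma(p)$ is stable in \LLn. Since $\neg\phi$ is itself a negated formula it is stable, and stability is preserved under $\wedge$ and under $\neg\phi \to (\cdot)$ when the consequent... here the consequent is a variable, so one needs the identity $\vdash_{\LLn} \neg\neg(\neg\phi \to p) \leftrightarrow (\neg\phi \to p)$, which should follow because $\neg\neg(\chi \to p)$ is intuitionistically equivalent to $\chi \to \neg\neg p$, and in \LLn we have $\neg\neg p \to p$ by \Cref{intermediate_neg}(i). The $\wedge$ case is immediate since $\neg\neg(\alpha\wedge\beta)\leftrightarrow(\neg\neg\alpha\wedge\neg\neg\beta)$ is an intuitionistic theorem.

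Next I would verify condition (a) of \Cref{def_proj}, namely $\vdash_{\LLn}\sigma(\neg\phi)$. As in \Cref{theta_proj_pid} this reduces to a classical computation: one shows by induction on subformulas $\psi$ of $\neg\phi$ that $v(\psi)=1 \iff {}\vdash_\CPC \sigma(\psi)$, concludes $\vdash_\CPC \sigma(\neg\phi)$ from $v(\neg\phi)=1$, and then transfers this to \LLn. The transfer step is where the negative variant enters: $\sigma(\neg\phi)$ need not be classical, so one cannot invoke \Cref{flatv=classicalv} directly; instead I would argue that a formula provable in \CPC and built in the appropriate way is provable in \LLn, using that \LLn contains $\IPC$ together with $\neg\neg p \to p$ (\Cref{intermediate_neg}(i)) — equivalently, that the Glivenko-style closure forces $\sigma(\neg\phi)$, being itself $\neg\neg$-equivalent to a classical theorem instance, to be derivable. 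Finally, condition (b): $\vdash_{\LLn}\neg\phi \to (\sigma(p)\leftrightarrow p)$ for every $p$, which is immediate from the definition of $\sigma$ exactly as in \Cref{theta_proj_pid} — under the hypothesis $\neg\phi$, both $\neg\phi\wedge p$ and $\neg\phi\to p$ collapse to $p$ — and this already holds intuitionistically, hence in \LLn.

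I expect the main obstacle to be the transfer of provability from \CPC to \LLn in part (a): one must be careful that $\sigma(\neg\phi)$ lands in the fragment of classical theorems that \LLn actually proves. The cleanest route is probably to observe that $\sigma(\neg\phi)$ is (provably in $\IPC$) equivalent to a $\neg\neg$-prefixed formula, invoke Glivenko's theorem to get that prefixed formula from the classical proof, and then strip the double negation using the $\neg\neg p \to p$ axioms of \LLn; making this equivalence precise for the particular shape of $\sigma$ is the technical heart of the argument. Everything else — stability of the $\sigma(p)$, the classical induction, and clause (b) — is routine once the pattern from \Cref{theta_proj_pid} and \Cref{intermediate_neg} is in place.
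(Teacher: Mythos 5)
Your proposal is correct and follows essentially the same route as the paper's proof: Prucnal's substitution from \eqref{prucnal_sigma} applied to $\neg\phi$, the classical induction \eqref{cpc_sigma}, Glivenko's theorem to transfer $\vdash_\CPC\sigma(\neg\phi)$ into the logic, and the same intuitionistic double-negation identities together with $\neg\neg p\to p$ to check that $\sigma$ is stable in \LLn. The only simplification you miss is that, since $\sigma(\neg\phi)=\neg\sigma(\phi)$ is itself a negated formula, Glivenko already yields it in $\IPC\subseteq\LL\subseteq\LLn$, so the ``technical heart'' you worry about (stripping a $\neg\neg$-prefix via the $\neg\neg p\to p$ axioms) is unnecessary; likewise no $\Theta_X$-normal form is needed to obtain $v$ --- consistency of $\neg\phi$ alone guarantees a classical valuation with $v(\neg\phi)=1$.
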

\begin{proof}
Take a valuation $v$ such that $v(\neg\phi)=1$. Define a substitution $\sigma^{\neg\phi}_v$ for $\neg\phi$  in exactly the same way as in (\ref{prucnal_sigma}) of the preceding lemma. Put $\sigma=\sigma^{\neg\phi}_v$. The definition of $\sigma$ guarantees that $\vdash_{\LL}\phi\to(\sigma(p)\leftrightarrow  p)$ holds for all $p\in\rm{Prop}$. By (\ref{cpc_sigma}) and Glivenko's Theorem (see e.g. Theorem 2.47 in \cite{ZhaCha_ml}), we obtain that $\vdash_\LL\neg\sigma(\phi)$. Hence we have proved that $\neg\phi$ is projective in \LL. Now, by \Cref{intermediate_neg} $\LL\subseteq \LLn$, thus $\neg\phi$ is projective also in \LLn. 


It remains to check that the $\sigma$ defined as above is a stable substitution in \LLn, i.e., $\vdash_\LLn\sigma(p)\leftrightarrow \neg\neg\sigma(p)$ for all $p\in\rm{Prop}$. If $v(p)=0$, then by the definition, we have that $\sigma(p)=\neg\phi\wedge p$. Since $\vdash_\LLn\neg\neg p\to p$ (by \Cref{intermediate_neg}), we have that
\[\neg\neg\sigma(p)=\neg\neg(\neg\phi\wedge p)\dashv\vdash\neg\neg\neg\phi\wedge \neg\neg p\dashv\vdash\neg\phi\wedge p=\sigma(p),\]
as required. If $v(p)=1$, then by the definition we have that $\sigma(p)=\neg\phi\to p$. Since $\vdash_\LLn\neg\neg p\to p$, we have that
\[\neg\neg\sigma(p)=\neg\neg(\neg\phi\to p)\dashv\vdash\neg\neg\neg\phi\to \neg\neg p\dashv\vdash\neg\phi\to p=\sigma(p),\]
as required. 
\end{proof}


\begin{lem}\label{theta_proj}
For any nonempty team $X$ on a set $\{p_1,\dots,p_n\}$ of propositional variables, the formula $\Theta_X$ in \lang{\PD} (defined by Equation (\ref{Theta_X_df1})) is \Fl-projective in \PD.
\end{lem}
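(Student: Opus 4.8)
The claim is that $\Theta_X$, as defined by Equation~(\ref{Theta_X_df1}) in the language of \PD, is \Fl-projective in \PD. The natural strategy is to mimic the proof of \Cref{theta_proj_pid}, i.e.\ to use Prucnal's trick and exhibit an explicit flat projective unifier built from a chosen valuation $v\in X$. However, \PD has no intuitionistic implication in its language, so the substitution in~(\ref{prucnal_sigma}) is not available as it stands. The key observation will be that inside \PD we can still express the two "guarded" substitution clauses using only the connectives $\wedge$, $\sor$, $\neg$ and the propositional atoms: since $\Theta_X$ is flat (it is a \PDo-formula of the form $\bigsor_{v\in X}(\cdots)$, and disjunctions of propositional atoms are flat by \Cref{flat_charact} or directly), and flat formulas are closed under $\neg$, we can define, for $p$ with $v(p)=0$, $\sigma(p):=\Theta_X\wedge p$, and for $p$ with $v(p)=1$, $\sigma(p):=\Theta_X\sor\neg p$ — or, better, exploit that $\Theta_X$ is flat and pass through its classical reading so that $\Theta_X\to p$ can be replaced by the flat formula $\neg\Theta_X\sor p$ (which, restricted to singletons, behaves like classical implication). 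In either case $\sigma(p)$ is classical, hence flat, so $\sigma$ is a flat substitution that is well-defined on \lang{\PD}.

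First I would fix $V=\{p_1,\dots,p_n\}$, pick $v\in X$, and set $\phi=\Theta_X$. Viewing $\phi$ as a classical formula (identifying $\sor$ with classical $\vee$, which is legitimate by \Cref{flatv=classicalv}), we have $v(\phi)=1$ since the disjunct $p_1^{v(p_1)}\wedge\dots\wedge p_n^{v(p_n)}$ is satisfied by $v$. Next I define the substitution $\sigma$ as above, using only classical connectives available in \lang{\PD}, and note $\sigma(p)$ is classical, hence flat by \Cref{flatv=classicalv} and flatness of classical formulas, so $\sigma\in\Fl$ and $\sigma$ is well-defined. Then, by the standard inductive argument underlying~(\ref{cpc_sigma}) — which goes through purely classically and so applies verbatim to the classical connectives $\wedge,\sor,\neg$ — one gets $\vdash_\CPC\sigma(\psi)\iff v(\psi)=1$ for all subformulas $\psi$ of $\phi$; in particular $\vdash_\CPC\sigma(\phi)$, whence $\vdash_\PD\sigma(\phi)$ by \Cref{flatv=classicalv}. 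This establishes condition~(a) of \Cref{def_proj}.

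For condition~(b) I must check $\phi,\sigma(p)\vdash_\PD p$ and $\phi,p\vdash_\PD\sigma(p)$ for each propositional variable $p$. Here I would argue semantically (the consequence relation of \PD is defined semantically, by \Cref{def_logics} as extended to \PD): assume $X'\models\phi$, i.e.\ $X'\subseteq X$ by \Cref{theta_prop}. In the case $v(p)=0$ one has $\sigma(p)=\phi\wedge p$, and $\phi\wedge\sigma(p)\equiv\phi\wedge p$, so both entailments are immediate. In the case $v(p)=1$, if $\sigma(p)=\neg\phi\sor p$ (the flat surrogate for $\phi\to p$), then on any $X'$ with $X'\models\phi$ we have, for each $w\in X'$, $\{w\}\not\models\neg\phi$, so the $\sor$-decomposition of $\neg\phi\sor p$ forces $\{w\}\models p$, giving $X'\models p$; conversely if $X'\models p$ then trivially $X'\models\neg\phi\sor p$ (take the $p$-side). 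Thus $\phi\vdash_\PD\sigma(p)\leftrightarrow p$ in the appropriate sense, yielding~(b). It then follows (via the remark after \Cref{def_proj}, which applies because \PD has the Deduction-Theorem-style behaviour needed for the inductive extension — or one simply repeats the semantic argument for arbitrary $\psi$) that $\sigma$ is an \Fl-projective unifier, so $\Theta_X$ is \Fl-projective in \PD.

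**Main obstacle.** The delicate point is the absence of intuitionistic implication in \lang{\PD}: I cannot literally use the unifier of~(\ref{prucnal_sigma}), and I must make sure the flat surrogate I choose really satisfies the projectivity conditions with respect to team semantics (in particular that $\neg\phi\sor p$ genuinely acts as "$\phi\to p$" on subteams of $X$, using flatness of $\phi$ and the fact that $\neg\phi$ is false on every nonempty subteam of $X$). I also need to double-check that the inductive identity~(\ref{cpc_sigma}) used to prove condition~(a) only ever touches classical connectives (it does, since $\phi=\Theta_X$ is a \PDo-formula built from $\wedge$ and $\sor$ over propositional atoms), so no appeal to a non-compositional translation — which \Cref{lemsigmaoperation}'s proof warned against — is needed here.
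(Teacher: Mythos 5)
Your proof is correct and takes essentially the same route as the paper: the same Prucnal-style unifier $\sigma(p)=\Theta_X\wedge p$ for $v(p)=0$ and $\sigma(p)=\neg\Theta_X\sor p$ for $v(p)=1$, with condition (a) checked classically via \Cref{flatv=classicalv} and condition (b) checked semantically on singleton subteams. (Just make sure to drop your first tentative clause $\Theta_X\sor\neg p$, which would not satisfy $\phi,\sigma(p)\vdash_\PD p$; the alternative $\neg\Theta_X\sor p$ that you actually verify is exactly the paper's choice.)
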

\begin{proof}
This lemma is proved also using a similar argument to that of \Cref{theta_proj_pid}.
Put $\phi=\Theta_X$. Take an arbitrary $v\in X$. Clearly, $v(\phi)=1$ when $\phi$ is viewed as a formula of \CPC (hereafter in the proof, we identify tensor disjunction $\otimes$ with classical disjunction). Define a substitution $\sigma_v^{\phi}$  as follows:
\[\sigma_v^{\phi}(p)=\begin{cases}
\phi\wedge p,&\text{ if }v(p)=0;\\
\neg\phi\vee p,&\text{ if }v(p)=1.
\end{cases}
\]
Put $\sigma=\sigma_v^{\phi}$. Clearly, the formula $\sigma(p)$ (in both cases) is classical, thus flat.

As in the proof of  \Cref{theta_proj_pid}, we have that (\ref{cpc_sigma}) holds for all subformulas $\psi$ of $\phi$, thus $\vdash_\CPC\sigma(\phi)$. Now, since the formula $\sigma(\phi)$ is classical, we obtain by \Cref{flatv=classicalv} that $\vdash_{\PD}\sigma(\phi)$.

It remains to show that $\phi,\sigma(p)\vdash_{\LL} p$ and $\phi,p\vdash_\LL\sigma(p)$ for all $p\in\rm{Prop}$. If $v(p)=0$, then clearly $\phi,\phi\wedge p\vdash_{\PD} p$  and $\phi,p\vdash_{\PD}\phi\wedge p$. If $v(p)=1$, to see that $\phi,\neg\phi\sor p\vdash_{\PD}p$, if $X\models \phi\wedge(\neg\phi\sor p)$, then for all $v\in X$, we have that $\{v\}\models\phi\wedge(\neg\phi\sor p)$, which  implies that $\{v\}\models p$, thereby $X\models p$, as required. That $\phi,p\vdash_{\PD}\neg\phi\sor p$ follows easily from the fact that $p\vdash_{\PD}\neg\phi\sor p$.
\end{proof}



 \begin{lem}\label{flat_proj_charact}
Let $\LL\in\{ \PD,\Inql, \PT\}$, and $\phi$ a consistent formula in \lang{\LL}. The following are equivalent: 
\begin{description}
\item[(i)] $\phi\dashv\vdash \Theta_X$ for some nonempty $n$-team $X$;
\item[(ii)] $\phi$ is flat;
\item[(iii)] $\phi$ is \Fl-projective in \LL;
\end{description}
\end{lem}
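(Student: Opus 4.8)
The plan is to establish the cycle of implications (i) $\Rightarrow$ (ii) $\Rightarrow$ (iii) $\Rightarrow$ (i), pulling together machinery already developed in the excerpt. The implication (i) $\Rightarrow$ (ii) is immediate: by \Cref{theta_prop} the formula $\Theta_X$ is semantically equivalent to a negated formula (for $\Inql$ and $\PT$ it is literally $\neg\neg(\cdots)$, and for $\PD$ the equivalence with the $\neg\neg$-form noted in Section~\ref{secnf} applies), and negated formulas are flat; since flatness is preserved under semantic equivalence, $\phi \dashv\vdash \Theta_X$ forces $\phi$ to be flat. Alternatively one simply cites the already-proved equivalence (i)~$\Leftrightarrow$~(ii) of \Cref{flat_charact} for $\PT$ and notes that the $\PD$ and $\Inql$ cases reduce to it via the common expressive power and the fact that $\Theta_X$ defines the same team in all three logics (\Cref{theta_prop}).

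For (ii) $\Rightarrow$ (iii), suppose $\phi$ is flat and consistent. By \Cref{flat_charact}(i)$\Rightarrow$(ii) (together with \Cref{NF_lm} and \Cref{theta_prop}, which give the normal form and transfer the statement to $\PD$ and $\Inql$), we have $\phi \equiv \Theta_X$ for some nonempty team $X$ on $\{p_1,\dots,p_n\}$. By \Cref{theta_proj_pid} (for $\LL \in \{\Inql,\PT\}$) or \Cref{theta_proj} (for $\LL = \PD$), the formula $\Theta_X$ is $\Fl$-projective in $\LL$, witnessed by a flat projective unifier $\sigma$. We then need to transport projectivity along the equivalence $\phi \equiv \Theta_X$: since $\vdash_\LL \sigma(\Theta_X)$ and $\phi \equiv \Theta_X$, \Cref{cons_flatsub} gives $\sigma(\phi) \equiv \sigma(\Theta_X)$, hence $\vdash_\LL \sigma(\phi)$, verifying \Cref{def_proj}(a); and condition \Cref{def_proj}(b) reads the same for $\phi$ as for $\Theta_X$ because $\phi, \sigma(p) \vdash_\LL p$ and $\phi, p \vdash_\LL \sigma(p)$ follow from $\Theta_X, \sigma(p) \vdash_\LL p$ and $\Theta_X, p \vdash_\LL \sigma(p)$ using $\phi \equiv \Theta_X$. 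Thus $\sigma$ is an $\Fl$-projective unifier for $\phi$.

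The remaining implication (iii) $\Rightarrow$ (ii) is where the real content lies, and I expect it to be the main obstacle. Assume $\phi$ is $\Fl$-projective in $\LL$ with flat projective unifier $\sigma$. The goal is to conclude that $\phi$ itself is flat. The idea is to use the projective identities $\phi, \sigma(p) \vdash_\LL p$ and $\phi, p \vdash_\LL \sigma(p)$ (extended to all formulas, as remarked after \Cref{def_proj}), so that on any team $X$ with $X \models \phi$ we have $X \models \psi \iff X \models \sigma(\psi)$ for every $\psi$; applied to $\psi = \phi$ this gives $X \models \phi \iff X \models \sigma(\phi)$, and since $\vdash_\LL \sigma(\phi)$, every team satisfies $\sigma(\phi)$. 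Now $\sigma(\phi)$ is flat by \Cref{flat_closure} (as $\sigma$ is flat and $\phi$ — wait, $\phi$ need not be flat a priori), so this argument must be arranged carefully: the correct route is to show $\phi \equiv \sigma(\phi)$ outright. We have $\phi \models \sigma(\phi)$ from $\phi, \phi \vdash_\LL \sigma(\phi)$, and conversely we need $\sigma(\phi) \models \phi$. Here one uses that on any team $Y \models \sigma(\phi)$ one wants $Y \models \phi$; this follows because $\sigma$ being a projective unifier for $\phi$ means precisely that $\sigma$ "collapses onto" the models of $\phi$ — concretely, one shows by the substitution lemma \Cref{lemsigmaoperation} that $Y \models \sigma(\phi) \iff Y_\sigma \models \phi$, and then that $Y_\sigma = Y$ is forced on teams validating enough of the projective conditions, or more simply that $\vdash_\LL \sigma(\phi)$ combined with $\phi, p \dashv\vdash_\LL \sigma(p)$ yields $\sigma(\sigma(\phi)) \dashv\vdash \sigma(\phi)$ and iterating lands inside the flat fragment. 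In any case, once $\phi \equiv \sigma(\phi)$ is established, \Cref{flat_closure} (or directly \Cref{flat_charact}(iii), noting $\sigma(\phi) \equiv \neg\neg\sigma(\phi)$ since $\sigma(\phi)$ is flat) delivers that $\phi$ is flat. The delicate point to get right is exactly the direction $\sigma(\phi) \models \phi$, i.e.\ that the projective unifier does not enlarge the set of models; I would handle it via \Cref{lemsigmaoperation} together with the extended form of \Cref{def_proj}(b), checking that $X \models \sigma(\phi)$ implies $X_\sigma \models \phi$ implies (using $\phi,p\vdash\sigma(p)$ run backwards) $X \models \phi$.
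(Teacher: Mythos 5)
Your handling of (i)$\iff$(ii) and (ii)$\Rightarrow$(iii) is fine and essentially what the paper does (it simply cites \Cref{flat_charact}, \Cref{theta_proj_pid} and \Cref{theta_proj}; your transport of projectivity along $\phi\equiv\Theta_X$ is a harmless elaboration). The genuine gap is in the key direction (iii)$\Rightarrow$(ii): the route you propose, namely establishing $\phi\equiv\sigma(\phi)$, cannot work. Since the consequence relations are defined semantically (\Cref{def_logics}), $\vdash_\LL\sigma(\phi)$ means that $\sigma(\phi)$ is valid, i.e.\ $\sigma(\phi)\equiv\top$; hence $\sigma(\phi)\models\phi$ (equivalently $\phi\equiv\sigma(\phi)$) would force $\phi$ itself to be valid, which a consistent projective formula need not be (take $\phi=\Theta_X$ for a proper team $X$, or simply $\phi=p$). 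Correspondingly, the chain $X\models\sigma(\phi)\Rightarrow X_\sigma\models\phi\Rightarrow X\models\phi$ that you sketch would show that \emph{every} team satisfies $\phi$; the last step has no justification, because the identities $\phi,p\dashv\vdash_\LL\sigma(p)$ only give information about teams that already satisfy $\phi$, and \Cref{lemsigmaoperation} offers no way to pass from $X_\sigma$ back to $X$.

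What the paper does instead, and what your argument is missing, is to combine the normal form with the disjunction property. By \Cref{NF_lm}, $\phi\bigsqcup_{i\in I}\Theta_{X_i}$; by \Cref{lemsigmaoperation} (equivalently \Cref{cons_flatsub}) this transfers to $\sigma(\phi)\bigsqcup_{i\in I}\sigma(\Theta_{X_i})$, so from $\models\sigma(\phi)$ and \Cref{DP} one obtains $\vdash_\LL\sigma(\Theta_{X_i})$ for some $i$. The extended form of \Cref{def_proj}(b) (the remark following that definition) gives $\phi,\sigma(\Theta_{X_i})\vdash_\LL\Theta_{X_i}$, hence $\phi\vdash_\LL\Theta_{X_i}$, while $\Theta_{X_i}\vdash_\LL\phi$ holds by the normal form; thus $\phi\dashv\vdash\Theta_{X_i}$, which is (i), and (ii) then follows by \Cref{flat_charact}. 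So the missing idea is to apply the unifier to the disjuncts $\Theta_{X_i}$ of the normal form and invoke the disjunction property, rather than trying to compare $\phi$ with $\sigma(\phi)$ directly.
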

 \begin{proof}
(ii)$\iff$(i)$\Longrightarrow$(iii) follows from \Cref{flat_charact,theta_proj_pid,theta_proj}.
Now, we show that (iii)$\Longrightarrow$(i). Suppose $\phi$ is \Fl-projective in \LL and $\sigma$ is a \Fl-projective unifier for $\phi$. Thus $\vdash_\LL\sigma(\phi)$, which implies $\models \sigma(\phi)$. 
By \Cref{DP}, 
this implies that there exists $1\leq i\leq k$ such that $\models\sigma(\Theta_{X_i})$. Since $\Theta_{X_i}$ is in \lang{\LL} and thus so is $\sigma(\Theta_{X_i})$, $\vdash_\LL\sigma(\Theta_{X_i})$ follows. On the other hand, we also have that $\phi,\sigma(\Theta_{X_i})\vdash_\LL\Theta_{X_i}$. It then follows that $\phi\vdash_\LL \Theta_{X_i}$. Hence $\phi\equiv \Theta_{X_i}$, which gives 
$\phi\dashv\vdash\Theta_{X_i}$.
\end{proof}

\begin{lem}\label{Lneg_proj_neg}
Let \LL be an intermediate logic such that $\ND\subseteq \LL$ and $\phi$ a consistent formula in \lang{\LL}. The following are equivalent:
\begin{description}
\item[(i)] $\vdash_\LLn\phi\leftrightarrow\neg\neg\phi$;
\item[(ii)] $\phi$ is \St-projective in \LLn;
\end{description}
\end{lem}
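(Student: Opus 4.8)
\textbf{Proof plan for \Cref{Lneg_proj_neg}.}
The plan is to mirror, in the intermediate-logic setting, the equivalence (ii)$\iff$(iii) that was just established for the dependence logics in \Cref{flat_proj_charact}, using \Cref{neg_proj} in place of \Cref{theta_proj_pid,theta_proj} and \Cref{intermediate_neg}(iii)/\Cref{ndneg_NF} in place of \Cref{DP}. The two directions will be handled separately.

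For (i)$\Longrightarrow$(ii): assume $\vdash_\LLn\phi\leftrightarrow\neg\neg\phi$. Then $\phi$ is provably equivalent in \LLn to the negated formula $\neg\neg\phi$, which is consistent since $\phi$ is. By \Cref{neg_proj}, $\neg\neg\phi$ is \St-projective in \LLn; let $\sigma\in\St$ be a stable projective unifier for $\neg\neg\phi$. I claim $\sigma$ is also a projective unifier for $\phi$: from $\vdash_\LLn\sigma(\neg\neg\phi)$ and the fact that $\sigma$ commutes with connectives we get $\vdash_\LLn\neg\neg\sigma(\phi)$, and since $\vdash_\LLn\neg\neg\psi\to\psi$ for every formula $\psi$ (this follows from \Cref{intermediate_neg}(i), which gives it for propositional variables, by a standard induction), we conclude $\vdash_\LLn\sigma(\phi)$, establishing \Cref{def_proj}(a). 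For \Cref{def_proj}(b), since $\phi\dashv\vdash_\LLn\neg\neg\phi$ and $\sigma$ satisfies $\neg\neg\phi,\sigma(p)\vdash_\LLn p$ and $\neg\neg\phi,p\vdash_\LLn\sigma(p)$, replacing $\neg\neg\phi$ by the \LLn-equivalent $\phi$ on the left gives exactly the required conditions for $\phi$. As $\sigma$ is stable, $\phi$ is \St-projective in \LLn.

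For (ii)$\Longrightarrow$(i): assume $\phi$ is \St-projective in \LLn with stable projective unifier $\sigma$. Since $\ND\subseteq\LL$, the Lemma following \Cref{intermediate_neg} applies to \LL, so $\phi$ is provably equivalent in \LLn to a formula $\bigvee_{i\in I}\neg\phi_i$; each disjunct $\neg\phi_i$, being a negation, is flat in the sense that $\vdash_\LLn\neg\phi_i\leftrightarrow\neg\neg\neg\phi_i$, hence $\vdash_\LLn\neg\phi_i\leftrightarrow\neg\neg\phi_i$. Thus $\vdash_\LLn\neg\neg\phi\leftrightarrow\bigvee_{i\in I}\neg\neg\neg\phi_i\leftrightarrow\bigvee_{i\in I}\neg\phi_i\leftrightarrow\phi$, which is (i). Here the work is only to verify that $\neg\neg$ distributes over a finite disjunction of negated formulas in \LLn, which rests on $\vdash_\LLn\neg\neg\psi\to\psi$ and intuitionistic reasoning; alternatively one can argue via \Cref{def_proj}(a): $\vdash_\LLn\sigma(\phi)$ implies (by the projective-unifier property pushed from variables to all formulas, as remarked after \Cref{def_proj}) $\phi\vdash_\LLn\sigma(\phi)$ is not what we want—rather one uses $\vdash_\LLn\sigma(\phi)$ together with $\phi,\sigma(\phi)\vdash_\LLn\phi$ trivially, so this route gives nothing extra, and the distributivity argument is the clean one.

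\textbf{Main obstacle.} The genuine content is the direction (ii)$\Longrightarrow$(i): one must know that \St-projectivity forces provable equivalence with a ``stable'' (i.e. $\neg\neg$-fixed) formula, and the only handle on this is the normal-form Lemma (every formula in \LLn is \LLn-equivalent to a disjunction of $\Theta_X$-type formulas $\bigvee\neg\phi_i$), which requires $\ND\subseteq\LL$ — exactly the hypothesis of the lemma. The (i)$\Longrightarrow$(ii) direction is essentially bookkeeping once \Cref{neg_proj} is in hand. I expect the subtlety to be purely in correctly invoking the earlier normal-form lemma and checking the $\neg\neg$-distribution, both of which are routine intuitionistic manipulations given $\vdash_\LLn\neg\neg p\to p$.
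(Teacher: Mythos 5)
Your direction (ii)$\Longrightarrow$(i) has a fatal gap: the ``distributivity'' you rely on, namely $\vdash_\LLn\neg\neg\bigvee_{i\in I}\neg\phi_i\leftrightarrow\bigvee_{i\in I}\neg\phi_i$, is false. If it held, then by the normal-form lemma \emph{every} formula would satisfy (i), which contradicts the paper's own example $\nvdash_\Inql\neg\neg(p\vee\neg p)\to(p\vee\neg p)$ with $\Inql=\ND^\neg$: here $p\vee\neg p\dashv\vdash_\Inql\neg\neg p\vee\neg p$ is a disjunction of negated formulas, $\neg\neg(p\vee\neg p)$ is a theorem (being in \IPC), yet $p\vee\neg p$ is not. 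The same example shows that (i) genuinely fails for some consistent formulas, so no proof of (ii)$\Longrightarrow$(i) can avoid using the hypothesis that $\phi$ is \St-projective --- which your argument never does (you considered the unifier and discarded it). The paper's route is the one you dismissed: write $\phi\dashv\vdash_\LLn\bigvee_{i\in I}\neg\phi_i$, apply the stable projective unifier $\sigma$ to $\vdash_\LLn\sigma(\phi)$; by closure of $\vdash_\LLn$ under stable substitutions (\Cref{intermediate_neg}(ii)) this gives $\vdash_\LLn\bigvee_{i\in I}\neg\sigma(\phi_i)$, and the disjunction property of \LLn yields $\vdash_\LLn\sigma(\neg\phi_i)$ for a single $i$; then \Cref{def_proj}(b) (lifted from variables to formulas) gives $\phi,\sigma(\neg\phi_i)\vdash_\LLn\neg\phi_i$, hence $\phi\vdash_\LLn\neg\phi_i$, and since $\neg\phi_i\vdash_\LLn\phi$ we get $\phi\dashv\vdash_\LLn\neg\phi_i$, from which (i) follows because negated formulas are stable.

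Your direction (i)$\Longrightarrow$(ii) also leans on a false principle: $\vdash_\LLn\neg\neg\psi\to\psi$ does \emph{not} hold for all formulas $\psi$ (the ``standard induction'' from \Cref{intermediate_neg}(i) breaks at $\vee$; again $\psi=p\vee\neg p$ in \Inql is a counterexample), so you cannot pass from $\vdash_\LLn\neg\neg\sigma(\phi)$ to $\vdash_\LLn\sigma(\phi)$ this way. This direction is repairable, and the repair is the natural reading of the paper's appeal to \Cref{neg_proj}: since $\sigma$ is stable and $\vdash_\LLn$ is closed under stable substitutions, apply $\sigma$ to the hypothesis (i) itself to obtain $\vdash_\LLn\sigma(\phi)\leftrightarrow\neg\neg\sigma(\phi)$, and combine this with $\vdash_\LLn\sigma(\neg\neg\phi)=\neg\neg\sigma(\phi)$ to conclude $\vdash_\LLn\sigma(\phi)$; your transfer of condition (b) across $\phi\dashv\vdash_\LLn\neg\neg\phi$ is fine as written.
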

\begin{proof}
(i)$\Longrightarrow$(ii) follows from \Cref{neg_proj}. 
For (ii)$\Longrightarrow$(i), by \Cref{ndneg_NF},  in \LLn we have that $\phi\dashv\vdash\bigvee_{i\in I}\neg\phi_i$ for some formulas $\{\neg\phi_i\mid i\in I\}$. By a similar argument to that in the proof of ``(iii)$\Longrightarrow$(i)'' of \Cref{flat_proj_charact}, we obtain in \LLn that $\phi\dashv\vdash\neg\phi_i$ for some $i\in I$, which implies that $\vdash_{\LLn}\phi\leftrightarrow\neg\neg\phi$.
\end{proof}


\section{Structural completeness of the logics}

In this section we prove the main results of our paper, namely that the three propositional logics of dependence \PD, \Inql, and \PT are \Fl-structurally complete and that the negative variants of logics extending \ND are hereditarily \St-structurally complete. In both cases the proof of the fact is based on the existence, for every formula $\phi$, of certain \Su-projective formulas $\phi_i$ such that $\phi \bigsqcup_{i\in I} \phi_i$, where in the first case \Su consists of all flat substitutions and in the second case of all stable ones. As mentioned in Remark~\ref{remarkpid}, it is not hard to prove by the same methods that also the logic \PID  is \Fl-structurally complete, where  \PID is an extension of propositional intuitionistic dependence logic \PIDo in the same manner as \PT is an extension of \PTo. 

\begin{dfn}
Let \LL be a logic, and \Su a set of $\vdash_\LL$-substitutions. A rule $\phi/\psi$ of  \LL is said to be \emph{\Su-admissible}, in symbols $\phi\adm^{\Su}_\LL\psi$, 
if for all $\sigma\in \Su$, $\vdash_\LL\sigma(\phi)\,\Longrightarrow \,\vdash_\LL\sigma(\psi)$.

In case \Su is the set of all substitutions, we write $\adm$ for $\adm^{\Su}$, and such a rule is called an \emph{admissible rule}. 
\end{dfn}

\begin{dfn}
A logic \LL is said to be \emph{\Su-structurally complete} if every \Su-admissible rule of \LL is derivable in \LL, i.e., $\phi\adm^{\Su}_\LL\psi\iff\phi\vdash_\LL\psi$. In case \Su is the set of all substitutions and \LL is \Su-structurally complete, we say that \LL is \emph{structurally complete}.
\end{dfn}

Informally, a rule is admissible in a logic \LL if its addition to the logic does not change the theorems that are derivable. Clearly, if \Su is a set of \LL-substitutions, then $\phi\vdash_\LL\psi\Longrightarrow\phi\adm^{\Su}_\LL\psi$ for all formulas $\phi$ and $\psi$ in \lang{\LL}. In particular, by \Cref{cons_flatsub}, all derivable rules of \PD and \Inql are \Fl-admissible in the logics. 
A logic that is \Su-structurally complete has no nontrivial \Su-admissible rules: all such rules are derivable in the logic. Classical logic is structurally complete, but intuitionistic logic is not, as are many other intermediate logics. The well-known example showing that intuitionistic logic is not structurally complete uses \emph{Harrop's Rule}: 
\[
 \phi \imp \psi \vee \theta \adm_{\IPC} (\phi \imp \psi) \vee (\phi \imp \theta) \text{ and } 
 \phi \imp \psi \vee \theta \not\vdash_{\IPC} (\phi \imp \psi) \vee (\phi \imp \theta).
\]

Recall the definition of $\bigsqcup$ just below Lemma~\ref{basic_prop_pt}: 
$\phi \bigsqcup_i \phi_i$ holds if and only if $\phi_i \models \phi$ for all $i$, and for all teams $X$:  
$X\models \phi$ implies $X \models \phi_i$ for some $i$. 



\begin{lem}\label{lem_strcpl_proj} 
For any $\LL$ which is an intermediate theory or one of \PD, \Inql or \PT, and any set \Su of 
\LL-substitutions, if for every consistent formula $\phi$ in \lang{\LL} there exists a finite set 
$\{\phi_i\mid i\in I\}$ of \Su-projective formulas in \lang{\LL} such that 
$\phi \bigsqcup_{i\in I} \phi_i$, then \LL is \Su-structurally complete.  
\end{lem}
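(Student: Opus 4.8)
The plan is to show that any $\Su$-admissible rule $\phi \adm^{\Su}_\LL \psi$ is already derivable, i.e.\ $\phi \vdash_\LL \psi$. Since one direction (derivable implies $\Su$-admissible) is immediate from the fact that $\Su$ consists of $\vdash_\LL$-substitutions, only the forward direction needs work. First I would dispose of the trivial case: if $\phi$ is inconsistent in $\LL$, then $\phi \vdash_\LL \psi$ holds vacuously, so assume $\phi$ is consistent. By hypothesis there is a finite family $\{\phi_i \mid i \in I\}$ of $\Su$-projective formulas with $\phi \bigsqcup_{i \in I} \phi_i$; in particular $\phi_i \models \phi$ (hence $\phi_i \vdash_\LL \phi$) for every $i$, and every team (equivalently, for the intermediate-theory case, every model) satisfying $\phi$ satisfies some $\phi_i$.

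The heart of the argument is to show $\phi_i \vdash_\LL \psi$ for each $i \in I$, and then to combine these. Fix $i$ and let $\sigma_i \in \Su$ be a $\Su$-projective unifier for $\phi_i$, so that $\vdash_\LL \sigma_i(\phi_i)$ and, by Definition~\ref{def_proj}(b) together with the standard inductive extension noted after it, $\phi_i, \sigma_i(\chi) \vdash_\LL \chi$ and $\phi_i, \chi \vdash_\LL \sigma_i(\chi)$ for all formulas $\chi$. From $\vdash_\LL \sigma_i(\phi)$? — not quite; rather, since $\phi_i \vdash_\LL \phi$ and $\sigma_i$ is a $\vdash_\LL$-substitution, we get $\sigma_i(\phi_i) \vdash_\LL \sigma_i(\phi)$, and combined with $\vdash_\LL \sigma_i(\phi_i)$ this yields $\vdash_\LL \sigma_i(\phi)$. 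Now $\Su$-admissibility of $\phi/\psi$ applies to $\sigma_i \in \Su$ and gives $\vdash_\LL \sigma_i(\psi)$. Finally, using $\phi_i, \sigma_i(\psi) \vdash_\LL \psi$ (the ``extended'' form of Definition~\ref{def_proj}(b) applied to the formula $\psi$) together with $\vdash_\LL \sigma_i(\psi)$, we conclude $\phi_i \vdash_\LL \psi$.

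It remains to assemble $\phi \vdash_\LL \psi$ from the facts $\phi_i \vdash_\LL \psi$ for all $i \in I$. Here I would argue semantically, which is legitimate since the consequence relations in all the relevant logics are defined semantically (for the logics of dependence this is Definition~\ref{def_logics}, and for intermediate theories $\vdash_\LL$ is, after relativizing to $\LL$-models, also a semantic matter). Let $X$ be any team (or model) with $X \models \phi$. By the defining property of $\bigsqcup$ there is some $i \in I$ with $X \models \phi_i$, and then $\phi_i \vdash_\LL \psi$ gives $X \models \psi$. Hence $\phi \models \psi$, i.e.\ $\phi \vdash_\LL \psi$, as desired. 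The main obstacle — and the point that requires care rather than ingenuity — is making sure the ``extended'' version of Definition~\ref{def_proj}(b) (that $\phi_i, \sigma_i(\chi) \vdash_\LL \chi$ and $\phi_i, \chi \vdash_\LL \sigma_i(\chi)$ hold for \emph{all} $\chi$, not just propositional variables) is available in each of the logics under consideration; this is exactly the standard inductive fact recorded immediately after Definition~\ref{def_proj}, relying on the Deduction Theorem in the logics that have implication and on a direct induction over the connectives otherwise, so no new work is needed. One should also note that in the intermediate-theory case the passage between ``$\vdash_\LL$'' and ``truth in all $\LL$-models'' is the usual completeness fact, so the semantic gluing step goes through uniformly across all the cases covered by the lemma.
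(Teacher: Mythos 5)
Your proposal is correct and follows essentially the same route as the paper's own proof: reduce to consistent $\phi$, unify each projective $\phi_i$ to get $\vdash_\LL\sigma_i(\phi)$, apply \Su-admissibility to obtain $\vdash_\LL\sigma_i(\psi)$, use the extended form of projectivity to get $\phi_i\vdash_\LL\psi$, and glue via $\phi\bigsqcup_{i\in I}\phi_i$. You in fact spell out two steps the paper leaves implicit (the passage from $\vdash_\LL\sigma_i(\phi_i)$ to $\vdash_\LL\sigma_i(\phi)$ and the final semantic gluing), which is fine.
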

\begin{proof}
We show that every \Su-admissible rule $\phi\adm_\LL^\Su\psi$ of \LL is derivable, i.e., $\phi\vdash_\LL\psi$. If $\phi$ is inconsistent, then clearly $\phi\vdash_\LL\bot\vdash_\LL\psi$. Now assume that $\phi$ is consistent. By assumption there exists a finite set $\{\phi_i\mid i\in I\}$ of \Su-projective formulas  such that $\phi \bigsqcup_{i\in I} \phi_i$. Let $\sigma_{i} \in \Su$ be the projective unifier of $\phi_i$. 
Thus $\vdash_\LL\sigma_i(\phi_i)$. Hence $\vdash_\LL\sigma_i(\phi)$ for all $i \in I$. From $\phi\adm_\LL^\Su\psi$ we derive $\vdash_\LL\sigma_i(\psi)$ for each $i\in I$. 
Since $\sigma_i$ is a projective unifier for $\phi_i$, we have that $\phi_i,\sigma_i(\psi)\vdash_\LL\psi$. It follows that $\phi_i\vdash_\LL\psi$ for each $i\in I$. Therefore $\phi\vdash_\LL\psi$.
\end{proof}

\begin{thm}\label{maintheorem}
\PD, \Inql and \PT are \Fl-structurally complete. 
\end{thm}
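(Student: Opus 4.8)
The plan is to apply \Cref{lem_strcpl_proj} with $\Su = \Fl$, the class of all flat substitutions. By that lemma, it suffices to show that for each logic $\LL \in \{\PD, \Inql, \PT\}$ and each consistent formula $\phi$ in $\lang{\LL}$, there is a finite set $\{\phi_i \mid i \in I\}$ of $\Fl$-projective formulas in $\lang{\LL}$ with $\phi \bigsqcup_{i\in I}\phi_i$.

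\begin{proof}
By \Cref{cons_flatsub} the consequence relations of \PD, \Inql and \PT are closed under flat substitutions, so $\Fl$ is a set of $\vdash_\LL$-substitutions for each $\LL\in\{\PD,\Inql,\PT\}$, and \Cref{lem_strcpl_proj} is applicable. Fix such an $\LL$ and a consistent formula $\phi(p_1,\dots,p_n)$ in $\lang{\LL}$. By the Normal Form theorem (\Cref{NF_lm}(ii)) there is a finite collection $\{X_i \mid i\in I\}$ of nonempty teams on $V=\{p_1,\dots,p_n\}$ such that $\phi \bigsqcup_{i\in I}\Theta_{X_i}$ (the teams may be taken nonempty since $\phi$ is consistent; if some $\Theta_{X_i}=\bot$ it can be discarded). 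Each $\Theta_{X_i}$ lies in $\lang{\LL}$: for $\LL=\PD$ we use the form \eqref{Theta_X_df1}, and for $\LL\in\{\Inql,\PT\}$ the form \eqref{Theta_X_df2}. By \Cref{theta_proj} each such $\Theta_{X_i}$ is $\Fl$-projective in \PD, and by \Cref{theta_proj_pid} each is $\Fl$-projective in \Inql and in \PT. Thus $\{\Theta_{X_i}\mid i\in I\}$ is a finite set of $\Fl$-projective formulas in $\lang{\LL}$ with $\phi\bigsqcup_{i\in I}\Theta_{X_i}$, and \Cref{lem_strcpl_proj} yields that $\LL$ is $\Fl$-structurally complete.
\end{proof}

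The only work beyond quoting earlier results is the bookkeeping about which $\Theta_X$ representation belongs to which language and the observation that consistency of $\phi$ lets us take all $X_i$ nonempty; neither presents a genuine obstacle, since \Cref{NF_lm}(ii), \Cref{theta_proj_pid} and \Cref{theta_proj} have already done the substantive projectivity arguments via Prucnal's trick. In effect the theorem is a direct corollary of \Cref{lem_strcpl_proj}, \Cref{NF_lm}, \Cref{theta_proj_pid}, \Cref{theta_proj} and \Cref{cons_flatsub}, so I expect no hard step; if anything, the subtle point is merely to make sure the projective unifiers supplied by the projectivity lemmas are themselves flat substitutions, which is exactly what those lemmas assert.
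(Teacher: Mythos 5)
Your proposal is correct and follows essentially the same route as the paper: the paper's proof is exactly the combination of \Cref{NF_lm}, \Cref{lem_strcpl_proj} and the $\Fl$-projectivity of the $\Theta_X$ formulas, which the paper invokes via \Cref{flat_proj_charact} while you cite \Cref{theta_proj_pid} and \Cref{theta_proj} directly. Your side remarks (discarding $\Theta_\emptyset$ using consistency, and flatness of the projective unifiers) are accurate and handled implicitly in the paper.
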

\begin{proof}
By \Cref{NF_lm} for the extended logics, \Cref{flat_proj_charact,lem_strcpl_proj}.
\end{proof}

Let \LL be an intermediate theory/logic and \Su a set of $\vdash_\LL$-substitutions. 
We say that \LL is \Su-\emph{hereditarily structurally complete} if for
any intermediate theory $\LL'$ such that $\LL\subseteq \LL'$ and \Su is a set of $\vdash_{\LL'}$-substitutions, $\LL'$ is \Su-structurally complete. In case \Su is the class of all substitutions of \LL, then we say that \LL is \emph{hereditarily structurally complete}. It is known that none of \ND, \KP and \ML is hereditarily structurally complete.

\begin{thm}\label{strcpl_neg_int}
For any intermediate logic \LL such that $\ND\subseteq \LL$, its negative variant 
\LLn is \St-hereditarily structurally complete. In particular, $\ND^\neg$, $\KP^\neg$ and $\ML^\neg$ are \St-hereditarily structurally complete.
\end{thm}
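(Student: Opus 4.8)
The plan is to reduce this theorem to the machinery already assembled, in exactly the way the title ``hereditarily'' suggests. Fix an intermediate logic $\LL$ with $\ND \subseteq \LL$, and let $\LL'$ be any intermediate theory with $\LLn \subseteq \LL'$ such that $\St$ is a set of $\vdash_{\LL'}$-substitutions. The goal is to invoke \Cref{lem_strcpl_proj} for $\LL'$ and the substitution class $\St$: it suffices to show that every consistent formula $\phi$ in the (intuitionistic) language admits a finite set $\{\phi_i \mid i \in I\}$ of $\St$-projective formulas in $\LL'$ with $\phi \bigsqcup_{i \in I} \phi_i$. The natural candidates are the negative normal-form disjuncts: by \Cref{ndneg_NF}(i) we have $\LLn = \Inql$ regardless of which $\LL$ between $\ND$ and $\ML$ we picked, but more to the point we only need $\ND \subseteq \LL$, so \Cref{ndneg_NF}'s normal-form lemma (the unnumbered lemma stating every formula is $\LLn$-provably equivalent to some $\bigvee_{i \in I} \neg \phi_i$) gives, for each $\phi$, formulas $\neg\phi_i$ with $\phi \dashv\vdash_{\LLn} \bigvee_{i\in I}\neg\phi_i$.

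The first key step is to promote this equivalence from $\LLn$ to $\LL'$: since $\LLn \subseteq \LL'$, provable equivalence in $\LLn$ is preserved, so $\phi \dashv\vdash_{\LL'} \bigvee_{i\in I}\neg\phi_i$, and hence $\phi \bigsqcup_{i\in I} \neg\phi_i$ relative to $\vdash_{\LL'}$ (for an intermediate theory $\LL'$, $\bigvee$ interacts with $\bigsqcup$ in the expected way — each disjunct entails the whole disjunction, and a theorem of the disjunction need not localize to a single disjunct, but that direction is not required by \Cref{lem_strcpl_proj}; only $\phi_i \vdash \phi$ and ``$\phi$ provable implies some $\phi_i$ provable'' are needed, and both follow). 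The second key step is projectivity: by \Cref{neg_proj}, each consistent $\neg\phi_i$ is $\St$-projective in $\LLn$, via a stable projective unifier $\sigma_i$. We must check this unifier still works in $\LL'$. Condition (a), $\vdash_{\LLn}\sigma_i(\neg\phi_i)$, lifts to $\vdash_{\LL'}\sigma_i(\neg\phi_i)$ by $\LLn\subseteq\LL'$; condition (b), the two entailments $\neg\phi_i,\sigma_i(p)\vdash p$ and $\neg\phi_i,p\vdash\sigma_i(p)$, likewise lift; and $\sigma_i$ remains a $\vdash_{\LL'}$-substitution by the hypothesis on $\LL'$. Inconsistent disjuncts $\neg\phi_i$ can simply be dropped (or replaced by $\bot$, which is trivially projective in the sense needed). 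So every consistent $\phi$ has a finite set of $\St$-projective formulas $\phi_i$ in $\lang{\LL'}$ with $\phi\bigsqcup_{i}\phi_i$, and \Cref{lem_strcpl_proj} yields that $\LL'$ is $\St$-structurally complete. Since $\LL'$ was arbitrary, $\LLn$ is $\St$-hereditarily structurally complete.

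For the ``in particular'' clause, one observes $\ND \subseteq \ND$, $\ND \subseteq \KP$, and $\ND \subseteq \ML$, so the general statement applies to each, giving $\ND^\neg$, $\KP^\neg$, $\ML^\neg$ all $\St$-hereditarily structurally complete (and by \Cref{ndneg_NF}(i) these three negative variants coincide, so really it is one logic — but stating all three matches the contrast with the classical fact that none of $\ND$, $\KP$, $\ML$ themselves is hereditarily structurally complete).

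I expect the main obstacle to be the bookkeeping around what ``hereditarily'' means here: one must be careful that the quantification is over intermediate \emph{theories} $\LL'$ extending $\LLn$ that happen to be closed under stable substitutions, not over all extensions, and that the lifting of projective unifiers genuinely only uses inclusion of theories plus closure under $\St$ — both of which are guaranteed. A secondary subtlety is making sure \Cref{neg_proj} is applied to \emph{consistent} $\neg\phi_i$ only, which is why the normal-form disjunction should be taken with its inconsistent disjuncts pruned before invoking projectivity; this is routine but worth stating explicitly so that the hypothesis of \Cref{lem_strcpl_proj} (a set of genuinely $\St$-projective formulas) is met on the nose.
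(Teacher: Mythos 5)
Your proposal is correct and follows essentially the same route as the paper, whose proof is just the citation of Theorem~\ref{ndneg_NF}, the normal-form lemma, Lemma~\ref{Lneg_proj_neg} (equivalently \Cref{neg_proj}, which you use directly) and \Cref{lem_strcpl_proj}; your added value is that you spell out the hereditary step, i.e.\ that the normal-form equivalence and the stable projective unifiers lift from \LLn to any intermediate theory $\LL'\supseteq\LLn$ closed under \St, which the paper leaves implicit. One caveat: your parenthetical about what \Cref{lem_strcpl_proj} requires is off. The proof of that lemma does not use ``$\vdash_{\LL'}\phi$ implies $\vdash_{\LL'}\phi_i$ for some $i$'' (and indeed this can fail for extensions $\LL'$ without the disjunction property, e.g.\ $\LL'=\CPC$); what it uses, besides $\phi_i\vdash\phi$, is the other direction of $\bigsqcup$, needed at the final step to pass from ``$\phi_i\vdash_{\LL'}\psi$ for all $i$'' to $\phi\vdash_{\LL'}\psi$. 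For an intermediate theory this is exactly $\phi\vdash_{\LL'}\bigvee_{i}\neg\phi_i$ together with intuitionistic disjunction elimination, and since you establish the full provable equivalence $\phi\dashv\vdash_{\LL'}\bigvee_i\neg\phi_i$, your argument is unaffected; only the stated justification of the $\bigsqcup$-condition should be corrected.
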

\begin{proof}
By \Cref{ndneg_NF}, \Cref{Lneg_proj_neg,lem_strcpl_proj}.
\end{proof}

%
%

\section{Concluding remarks}

We have shown that the three propositional logics of dependence, \PD,  \PT, \Inql, are structurally complete with respect to flat substitutions and that the negative variant of every intermediate logic that is an extension of \ND is hereditarily structurally complete with respect to stable substitutions. In particular, $\ND^\neg$, $\KP^\neg$ and $\ML^\neg$ are \St-hereditarily structurally complete. The reason for this are the strong normal forms that hold in these logics or theories. In this aspect they resemble classical logic, with its disjunctive normal form, that is also hereditarily structurally complete. 

Apart from \cite{Migliolietc89} there has not been much research on admissibility on 
intermediate theories that are not intermediate logics, and for propositional logics of dependence 
the above results are the first of such kind. Thus, naturally, many questions remain open.  
We discuss several of them. 

Theorem~\ref{strcpl_neg_int} states that the negative variant of extensions of \ND are hereditarily structurally complete. It follows from results by Maxsimova and Prucnal that any structurally complete intermediate logic with the disjunction property contains \KP and is contained in \ML, and in \cite{wojtylak04}, which recaptures these results, it is moreover shown that \KP itself is not structurally complete. The same holds for \ND, since it is properly contained in \KP. 
One wonders whether the fact that the negative variant of \ND and \KP are structurally complete could shed some light on admissibility in the original logics.

In this paper the results on admissibility are with respect to sets of substitutions, such as the flat and the stable substitutions. There exist logics for which establishing whether admissibility has certain properties, such as decidability, seems hard. These problems are often considered only for admissibility with respect to all substitutions, but one could start with smaller sets of substitutions, which may be easier to deal with. And although certain properties, such as decidability of admissibility, do not transfer from a smaller set of substitutions to its extensions, understanding a restricted case may stil help understanding the general case. 

On a more abstract level, there are two definitions of admissibility in the literature that in most instances amount to the same notion. Although intuitively clear, the proper connection between the two is not completely straightforward \cite{iemhoff13,metcalfe12}. And it is mostly considered only for admissibility with respect to the set of all substitutions. It would be nice to see whether this connection can be generalized to admissibility with respect to any set of substitutions.  

The results obtained in this paper made essential use of the disjunctive normal form of formulas of propositional logics of dependence. It is known from the literature that modal dependence logic and propositional independence logic both have a similar disjunctive normal form \cite{Yang_dissertation,EHMMVV2013}. We conjecture that the argument in this paper may apply to these two logics and lead to similar results.

\begin{acknowledgements}
The authors would like to thank Ivano Ciardelli, Dick de Jongh and Jouko V\"a\"an\"anen for useful discussions on the topic of this paper.
\end{acknowledgements}

\bibliographystyle{spmpsci}      
\bibliography{fan}   


\end{document}